\DeclareMathOperator{\Ob}{Ob}
\DeclareMathOperator{\ch}{ch}
\newcommand{\alg}{\mathrm{alg}} 
\newcommand{\gl}{\mathfrak{gl}}
\newcommand{\gen}{\mathrm{gen}}
\newcommand{\Cl}{\mathrm{Cl}}
\newcommand{\SVec}{\mathbf{SVec}}
\newcommand{\QVec}{\mathbf{QVec}}
\newcommand{\QPol}{\mathbf{QPol}}
\author{Rohit Nagpal}
\address{Department of Mathematics, University of Michigan, Ann Arbor, MI}
\email{\href{mailto:rohitna@umich.edu}{rohitna@umich.edu}}
\urladdr{\url{http://www-personal.umich.edu/~rohitna/}}
\author{Steven V Sam}
\address{Department of Mathematics, University of California, San Diego, CA}
\email{\href{mailto:ssam@ucsd.edu}{ssam@ucsd.edu}}
\urladdr{\url{http://math.ucsd.edu/~ssam/}}
\author{Andrew Snowden}
\address{Department of Mathematics, University of Michigan, Ann Arbor, MI}
\email{\href{mailto:asnowden@umich.edu}{asnowden@umich.edu}}
\urladdr{\url{http://www-personal.umich.edu/~asnowden/}}
\thanks{RN was partially supported by NSF DMS-1638352. SS was partially supported by NSF grant DMS-1812462. AS was supported by NSF grant DMS-1453893.}
\subjclass[2010]{%
13E05, 
13A50.
}
\date{August 18, 2021}
\title[On the geometry and representation theory of isomeric matrices]{On the geometry and representation theory\\ of isomeric matrices}
\begin{document}

\begin{abstract}
The space of $n \times m$ complex matrices can be regarded as an algebraic variety on which the group $\GL_n \times \GL_m$ acts. There is a rich interaction between geometry and representation theory in this example. In an important paper, de Concini, Eisenbud, and Procesi classified the equivariant ideals in the coordinate ring. More recently, we proved a noetherian result for families of equivariant modules as $n$ and $m$ vary. In this paper, we establish analogs of these results for the space of $(n|n) \times (m|m)$ isomeric matrices with respect to the action of $\bQ_n \times \bQ_m$, where $\bQ_n$ is the automorphism group of the isomeric structure (commonly known as the ``queer supergroup''). Our work is motivated by connections to the Brauer category and the theory of twisted commutative algebras.
\end{abstract}

\maketitle
\tableofcontents

\section{Introduction}

\subsection{Background}

Let $V$ and $W$ be finite dimensional complex vector spaces, and consider the space $\Hom(V,W)$ of all linear maps $V \to W$, regarded as an affine algebraic variety. The group $\GL(V) \times \GL(W)$ acts on this variety, and the decomposition of its coordinate ring is well-known:
\begin{displaymath}
\bC[\Hom(V,W)] = \Sym(V \otimes W^*) = \bigoplus_{\lambda} \bS_{\lambda}(V) \otimes \bS_{\lambda}(W^*)
\end{displaymath}
Here the sum is over all partitions $\lambda$ and $\bS_{\lambda}$ denotes the Schur functor associated to $\lambda$. An important feature of this decomposition is that it is multiplicity free: each non-zero summand is irreducible, and no two such summands are isomorphic.

The interaction between the geometry of the variety $\Hom(V,W)$ and the representation theory of its coordinate ring is a rich topic of study. An important theorem in this direction is the classification of equivariant ideals, established in \cite{CEP}: the key statement is that the ideal generated by the $\lambda$ summand is exactly the sum of the $\mu$ summands over all $\mu$ with $\lambda \subset \mu$. (Here $\subset$ denotes containment of Young diagrams.) Another theorem along these lines, that we proved \cite{sym2noeth}, concerns finiteness properties of equivariant modules that are uniform in $V$ and $W$: one can regard $(V,W) \mapsto \Sym(V \otimes W)$ as an algebra object in the category of bivariate polynomial functors, and we show that it is noetherian. (Omitting duality does not change much, and makes the functorial properties nicer.) The proof of this theorem crucially relies upon the aforementioned result of \cite{CEP}.

Similar pictures are known in some related contexts. For example, consider the space $\Sym^2(V^*)$ of symmetric bilinear forms on $V$. The coordinate ring $\Sym(\Sym^2(V))$ is again multiplicity free as a representation of $\GL(V)$. The equivariant ideals were classified in \cite{abeasis}, and the noetherian result was established in \cite{sym2noeth}. See \cite{pfaffians,periplectic,symsp1} for some other cases.

The purpose of this paper is to establish analogs of the above results in a new situation. An {\bf isomeric vector space} is a super vector space $V$ equipped with an odd-degree isomorphism $\alpha \colon V \to V$ squaring to the identity. The {\bf isomeric supergroup} (also known as the {\bf queer supergroup}) $\bQ(V)$ is the automorphism supergroup of $(V, \alpha)$. Given two isomeric vector spaces $(V,\alpha)$ and $(W,\beta)$, one can consider the supervariety $X$ of all linear maps $V \to W$ that are compatible with the isomeric structure. The supergroup $\bQ(V) \times \bQ(W)$ acts on $X$. We classify the equivariant ideals of $\bC[X]$ and establish a noetherian result.

\subsection{Statement of results}

We now state our main results in detail. Let $\fq$ be the infinite isomeric Lie superalgebra (see \S \ref{ss:isomeric}). There is a notion of polynomial representation for $\fq$, analogous to that for the general linear group (see \S \ref{ss:polrep}). A {\bf (bivariate) isomeric algebra} is a supercommutative superalgebra equipped with an action of $\fq \times \fq$ under which it forms a polynomial representation. Let $\bV$ be the standard representation of $\fq$ and let $\bU$ be the half tensor product $2^{-1} (\bV \otimes \bV)$, which is a polynomial representation of $\fq \times \fq$. (See \S \ref{ss:qvec} for the definition of half tensor product.) Let $A=\Sym(\bU)$. This is an isomeric algebra, and the main one of interest in this paper. The space $\Spec(A)$ is (more or less) the space of infinite isomeric matrices.

The irreducible polynomial representations of $\fq$ are parametrized by strict partitions. Let $T_{\lambda}$ be the irreducible representation corresponding to the strict partition $\lambda$. An analog of the Cauchy decomposition yields the decomposition
\begin{equation} \label{eq:intro-cauchy}
A = \bigoplus 2^{-\delta(\lambda)}(T_{\lambda} \otimes T_{\lambda}),
\end{equation}
where the sum is over all strict partitions $\lambda$, $\delta(\lambda)$ is either~0 or~1, and a $2^{-1}$ indicates a half tensor product. Each summand here is an irreducible representation of $\fq \times \fq$. This shows that $A$ is multiplicity free as a representation of $\fq \times \fq$. In particular, we see that an equivariant ideal of $A$ (meaning one stable by $\fq \times \fq$) is determined by which summands it contains. Our first main result is the following, which completely classifies the equivariant ideals of $A$:

\begin{theorem} \label{mainthm1}
Let $I^{\lambda}$ be the ideal of $A$ generated by the $\lambda$ summand in \eqref{eq:intro-cauchy}. Then $I^{\lambda}$ contains the $\mu$ summand if and only if $\lambda \subseteq \mu$.
\end{theorem}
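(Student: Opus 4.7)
My plan is to follow the strategy of de Concini--Eisenbud--Procesi, substituting a Pieri-type rule for queer polynomial representations in place of the classical Pieri rule for $\GL$. The crucial input needed is a ``one-box Pieri rule'' for $\fq$: for every strict partition $\lambda$, the tensor product $T_\lambda \otimes \bV$ decomposes as a sum $\bigoplus_\mu T_\mu$ over strict partitions $\mu$ obtained from $\lambda$ by adding a single box, with each summand appearing with nonzero multiplicity. I would expect this rule to be established earlier in the paper or to follow cleanly from the combinatorics of Schur $Q$-functions together with Sergeev duality.

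Given this, the ``only if'' direction is routine. Since $A$ is generated in degree~1 by $\bU \cong 2^{-1}(\bV \otimes \bV)$, every element of $I^\lambda$ arises by iterated multiplication of $T_\lambda \otimes T_\lambda$ by elements of $\bU$. Applying the Pieri rule in each of the two tensor factors of $\bU$ shows that $\bU \cdot (T_\lambda \otimes T_\lambda)$ is supported only on $T_\mu \otimes T_\mu$ summands with $\mu \supset \lambda$ and $|\mu| = |\lambda|+1$. Iterating and using the multiplicity-freeness of the Cauchy decomposition, $I^\lambda \subseteq \bigoplus_{\lambda \subseteq \mu} 2^{-\delta(\mu)}(T_\mu \otimes T_\mu)$.

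The ``if'' direction proceeds by induction on $|\mu| - |\lambda|$. For the base case, where $\mu$ differs from $\lambda$ by one box, the Pieri rule produces a nonzero $T_\mu \otimes T_\mu$ piece inside $(T_{(1)} \otimes T_\lambda) \otimes (T_{(1)} \otimes T_\lambda) \subset \bU \otimes (T_\lambda \otimes T_\lambda)$, and the claim reduces to showing the multiplication map $\bU \otimes A \to A$ is nonzero on this diagonal summand; since $T_\mu \otimes T_\mu$ is irreducible, this forces $T_\mu \otimes T_\mu \subseteq I^\lambda$. For the inductive step, I would prove a short combinatorial lemma: if $\lambda \subseteq \mu$ are strict, one can interpolate through a chain of strict partitions adding one box at each step (take the smallest index $i$ where $\lambda_i < \mu_i$; strictness of $\mu$ gives $\lambda_{i-1} = \mu_{i-1} \geq \mu_i + 1 \geq \lambda_i + 2$, leaving room to add a box in row $i$ while preserving strictness). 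Iterating the base case along such a chain completes the argument.

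The crux of the proof is the nonvanishing in the base case. The corresponding fact in the $\GL$ setting is a standard highest-weight vector computation, but for $\fq$ the distinction between ``type M'' and ``type Q'' irreducibles (encoded by $\delta(\lambda)$) and the half tensor product normalizations could in principle produce unwanted cancellations. Verifying that the multiplication map is genuinely nonzero on the $\mu$-isotypic piece---most likely by an explicit highest-weight calculation in $A = \Sym(\bU)$, or by a character argument reducing to a known Schur $Q$-function identity---is where I anticipate the real work being hidden.
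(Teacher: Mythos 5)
Your reduction to the one-box case is sound: the interpolation lemma for strict partitions is correct, and the ``only if'' direction does follow from the Pieri rule for Schur $Q$-functions together with multiplicity-freeness of the Cauchy decomposition. But there is a genuine gap exactly where you flag it: you never establish that the multiplication map $\bU \otimes (T_\lambda \otimes T_\lambda) \to A$ is nonzero on the $T_\mu \otimes T_\mu$-isotypic component, and that nonvanishing is the entire content of the theorem---everything else in your outline is formal. Neither of your suggested fallbacks closes it as stated. A character or Schur-$Q$ computation only shows that $T_\mu \otimes T_\mu$ occurs in the source $\bU \otimes (T_\lambda \otimes T_\lambda)$; it cannot detect whether the particular equivariant map ``multiplication'' kills that isotypic piece. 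And the explicit highest-weight computation, which is what drives the classical de Concini--Eisenbud--Procesi argument (products of minors), is genuinely delicate for $\fq$: the Cartan subalgebra is not abelian, so highest weight spaces are Clifford-algebra modules rather than lines, and the half tensor product normalizations intervene as well, so there is no ready-made description of the $\lambda$-highest-weight vectors of $\Sym(\bU)$ to multiply against. You have correctly located the hard step, but deferred rather than done it.

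The paper avoids this difficulty by never arguing on the $\fq \times \fq$ side at all. Schur--Sergeev duality identifies $A$ with the $\cC \boxtimes \cC$ algebra $\bigoplus_{n \ge 0} \cH_n$ built from the Hecke--Clifford algebras (each $\cH_n$ viewed as a bimodule over itself); under this identification the degree-one multiplication becomes the algebra map $\iota_{1,n} \colon \cH_1 \otimes \cH_n \to \cH_{n+1}$ and the $\lambda$ summand becomes the isotypic two-sided ideal $J^{\lambda} \subset \cH_n$. The left ideal of $\cH_{n+1}$ generated by $\iota_{1,n}(1 \otimes J^{\lambda})$ is the induced module $\cH_{n+1} \otimes_{\cH_1 \otimes \cH_n} (\cH_1 \otimes J^{\lambda})$ (the natural surjection is injective because everything in sight is semisimple, hence flat), so its class in $\rK(\cC)$ is a positive multiple of $[\cH_1][S_{\lambda}]$; positivity of the Pieri coefficients then forces every $S_{\mu}$ with $\lambda \subset \mu$ and $|\mu|=|\lambda|+1$ to occur in it, and the structure theory of semisimple superalgebras upgrades occurrence of $S_{\mu}$ to containment of the whole block $J^{\mu}$ in the two-sided ideal. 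In other words, the nonvanishing you need is extracted from exactness plus positivity of multiplicities, with no explicit vectors at all. To complete your proof you would either have to transport your base case into this Hecke--Clifford picture, or actually carry out the $\fq$-highest-weight (or comparable explicit) calculation in $\Sym(\bU)$ that your sketch presently leaves open.
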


The key idea in proving Theorem~\ref{mainthm1} is to use Schur--Sergeev duality to convert to a more combinatorial problem; see \S \ref{ss:class-overview} for a more detailed outline. We note that this theorem is the isomeric analog of the aforesaid result of \cite{CEP}. As a simple application of this theorem, we describe the isomeric determinantal ideals (see Remark~\ref{rmk:detideal}).

Theorem~\ref{mainthm1} implies that the lattice of ideals of $A$ is isomorphic to the lattice of ideals in the poset of strict partitions. From this, it follows that equivariant ideals of $A$ satisfy the ascending chain condition. This is a noetherian-like result; however, in practice one wants a much stronger statement.

Let $\Mod_A$ be the category of $\fq \times \fq$ equivariant $A$-modules that form a polynomial representation of $\fq \times \fq$. In other words, treating $A$ as an algebra object of the tensor category $\Rep^{\pol}(\fq \times \fq)$, the category $\Mod_A$ is the category of $A$-module objects. We say that $A$ is {\bf noetherian} (as an isomeric algebra) if the category $\Mod_A$ is locally noetherian. Equivalently, this means that any subobject of a finitely generated object of $\Mod_A$ is again finitely generated. The corollary of Theorem~\ref{mainthm1} observed above shows that any subobject of $A$ itself is finitely generated. This is not enough to conclude that $A$ is noetherian: a general finitely generated $A$-module is a quotient of $V \otimes A$ for some finite length polynomial representation $V$, but typically not a quotient of a finite direct sum of $A$'s. Our second main theorem yields the desired strengthening:

\begin{theorem} \label{mainthm2}
The bivariate isomeric algebra $A$ is noetherian.
\end{theorem}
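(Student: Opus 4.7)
The plan is to follow the strategy of \cite{sym2noeth}: bootstrap from Theorem~\ref{mainthm1}, which controls the lattice of equivariant ideals of $A$, to a noetherianity statement for all finitely generated modules, by a leading-term argument combined with a well-quasi-ordering of strict partitions.

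First I would reduce to showing that $V \otimes A$ is noetherian in $\Mod_A$ for every finite-length polynomial representation $V$ of $\fq \times \fq$, since every finitely generated object of $\Mod_A$ is a quotient of such a $V \otimes A$. Because extensions, quotients, and subobjects of noetherian modules are noetherian, by induction on the length of $V$ we may further assume $V$ is simple, say $V \cong T_\lambda \boxtimes T_\mu$ (possibly up to a half-tensor twist).

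Next I would exploit a queer Cauchy--Pieri rule together with the decomposition \eqref{eq:intro-cauchy} to write
\begin{displaymath}
V \otimes A \;=\; \bigoplus_{(\nu, \rho)} U_{\nu, \rho},
\end{displaymath}
an isotypic decomposition for $\fq \times \fq$ indexed by pairs of strict partitions, each summand being (a twist of) a summand of $A$. I would fix a total order on such pairs refining the componentwise containment order, and for a submodule $M \subseteq V \otimes A$ define its leading-term submodule $\mathrm{lt}(M)$ by taking, for each element of $M$, the component in the smallest-indexed position on which it is supported. The crucial claim is that $\mathrm{lt}(M)$ is a sub-$A$-module of $V \otimes A$ and that $M \mapsto \mathrm{lt}(M)$ is strictly monotone on proper ascending chains, so that stabilization of $\mathrm{lt}(M_i)$ forces stabilization of $M_i$.

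Finally, since each $U_{\nu, \rho}$ is isomorphic to an ideal-like subquotient of $A$, Theorem~\ref{mainthm1} identifies sub-$A$-modules concentrated in a single isotypic piece with up-sets in the poset of strict partitions. Higman's lemma ensures that this poset, and hence its product with itself, is well-quasi-ordered under containment, so any ascending chain of leading-term data stabilizes after finitely many steps; combined with the monotonicity above, this proves $V \otimes A$ is noetherian. The main obstacle is the middle paragraph: constructing $\mathrm{lt}$ and verifying $A$-stability in the queer setting requires a careful analysis of how the Pieri rule for the half tensor product $\bU$ interacts with the chosen ordering, and this is the technical heart of the argument, analogous to the crux of \cite{sym2noeth} but complicated by the strict-partition combinatorics and half-tensor factors of $2^{-\delta(\lambda)}$.
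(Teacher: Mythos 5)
There is a genuine gap, and it sits exactly where you flag it: the middle paragraph. Your reduction to $V \otimes A$ with $V$ simple is fine, but the leading-term scheme cannot be pushed through as described, for two related reasons. First, $V \otimes A$ is not multiplicity free as a $\fq \times \fq$-representation: the isotypic component indexed by a pair $(\nu,\rho)$ has multiplicity governed by products of Schur $P$-function structure constants, which are frequently larger than one. Consequently a submodule of $V \otimes A$ is \emph{not} determined by the set of labels $(\nu,\rho)$ it meets; it is determined by a family of subspaces of multiplicity spaces, and an ascending chain can have constant ``support'' in your sense while growing strictly inside a single multiplicity space. A well-quasi-ordering of labels (Higman's lemma on strict partitions) therefore says nothing about such chains, so ``stabilization of the leading-term data'' does not force stabilization of the chain. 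Second, the step ``Theorem~\ref{mainthm1} identifies sub-$A$-modules concentrated in a single isotypic piece with up-sets in the poset'' misapplies that theorem: it classifies ideals of $A$ itself, which is multiplicity free, and gives no description of the $A$-submodule of $V \otimes A$ generated by a subspace of an isotypic component. This is precisely the point the introduction warns about: the ideal classification yields the ascending chain condition for subobjects of $A$, but a finitely generated module is a quotient of $V \otimes A$, not of a finite sum of copies of $A$, and no known leading-term or Gr\"obner-type argument bridges that gap for these quadratic algebras. (You also attribute this strategy to \cite{sym2noeth}, but that paper does not argue this way; if such a combinatorial argument worked, much of the machinery there and here would be unnecessary.)

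The paper's actual route is the torsion/generic decomposition. Theorem~\ref{mainthm1} enters only through Corollary~\ref{cor:A-bounded}: $A/I$ is bounded for any nonzero ideal $I$, so finitely generated torsion modules are noetherian and satisfy the finiteness condition (FT) (Lemma~\ref{lem:FT}). The substantial new input is the analysis of the generic category: after proving that localization at the distinguished maximal ideal $\fm$ is free (Theorem~\ref{thm:loc}), one identifies $\Mod_A^{\gen}$ with the category $\Rep^{\alg}(\fq)$ of algebraic representations studied in \cite{serganova} (Theorem~\ref{thm:gen}), importing finiteness of length of $T(V \otimes A)$, finite injective dimension, and the computation $S(T(V\otimes A)) \cong V \otimes A$. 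Lemma~\ref{lem:FT2} then shows the section functor preserves (FT), and the noetherianity proof runs the standard chain argument: saturate an ascending chain $N_\bullet \subset P$ by $N' = S(T(N_i))$, which is finitely generated, and observe $N'/N_1$ is finitely generated torsion, hence noetherian. If you want to salvage your approach, you would need, at minimum, a replacement for the multiplicity-space control that the generic-category identification provides; the poset combinatorics alone does not supply it.
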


The proof of Theorem~\ref{mainthm2} follows the general plan employed in previous noetherian results we have proved \cite{sym2noeth, periplectic, symsp1}. The basic idea is to break $\Mod_A$ up into two pieces: the torsion subcategory $\Mod_A^{\tors}$ and the generic category $\Mod_A^{\gen}$ (which is the Serre quotient by the torsion subcategory). Thanks to Theorem~\ref{mainthm1}, one can show that $\Mod_A^{\tors}$ is locally noetherian. The crucial step is to identify $\Mod_A^{\gen}$ with the category of algebraic representations of $\fq$ as studied in \cite{serganova}. We thus see that $\Mod_A^{\gen}$ inherits the pleasant properties of this representation category (such as: all objects are locally finite length, and finite length objects have finite injective dimension). We then piece together the information about $\Mod_A^{\tors}$ and $\Mod_A^{\gen}$ to obtain our result on $\Mod_A$.

We note that our proof of Theorem~\ref{mainthm2} yields quite a bit of extra useful information. For example, as stated above, we determine the structure of the generic category $\Mod_A^{\gen}$. We also prove some important results about the section functor $S \colon \Mod_A^{\gen} \to \Mod_A$. Some further results can be easily deduced: for instance, projective $A$-modules are injective.

\begin{remark}
The simplest example of an isomeric algebra is $\Sym(\bV)$. It is easily seen to be noetherian, see Remark~\ref{rmk:single-var}. The algebra $A$ is, in a sense, the smallest isomeric algebra where the noetherian property is not obvious.
\end{remark}

\subsection{Motivation}

In \cite{brauercat1}, the second and third authors investigate the Brauer category and its relatives (this is actually the first in a series of papers on this topic). One relative is the ``isomeric walled Brauer category,'' which relates to mixed tensor representations of the isomeric algebra. Theorem~\ref{mainthm2} implies a noetherian result for this category that will be important for that project. This was our motivation for studying $A$ specifically.

We also have a more general motivation for the current project. A {\bf $\GL$-algebra} is a commutative algebra equipped with an action of the infinite general linear group $\GL$ under which it forms a polynomial representation. In characteristic~0, these are equivalent to twisted commutative algebras (tca's) by Schur--Weyl duality. In recent years, $\GL$-algebras and tca's have received a lot of attention. All indications so far are that these are very well-behaved objects, though there is much that is still unknown. The isomeric algebras studied in this paper are close relatives of $\GL$-algebras. Given that $\GL$-algebras are such a rich topic, we expect that isomeric algebras are as well. This paper represents the first attempt to study them in any serious capacity.

\subsection{Future work}

As mentioned, it seems that isomeric algebras are likely just as rich as $\GL$-algebras. Developing the theory of isomeric algebras in more detail is therefore a potentially fruitful direction for future work.

A more specific problem is to classify the equivariant prime ideals of the isomeric algebra $A$ studied in this paper. The third author tackled the analogous problem for the $\GL$-algebra $\Sym(\Sym^2(\bC^{\infty}))$ in \cite{tcaspec}. He is currently pursuing this problem with Robert Laudone, and the result will appear in the forthcoming work \cite{LS}.

\subsection{A remark on terminology}

The supergroup $\bQ(n)$ has traditionally been called the ``queer supergroup.'' Since the connotations of this word have shifted over the years, we have introduced the term ``isomeric'' to take its place\footnote{The word ``isomeric'' is not new: it occurs in chemistry as the adjectival form of ``isomer.'' However, it does not appear to be used within pure mathematics.}. This word is derived from the Greek words \emph{isos}, meaning equal, and \emph{m\'eros}, meaning part, and is intended to reflect the fact that an isomeric structure on a super vector space makes its two parts isomorphic. We hope that other authors will choose to use this new terminology as well. We thank the conscientious editor who suggested that we change the terminology.

\subsection{Outline}

In \S \ref{s:isomeric}, we review isomeric vector spaces, the isomeric Lie superalgebra, and isomeric algebras. In \S \ref{s:class}, we classify the ideals of $A$. In \S \ref{s:local}, we show that equivariant $A$-modules are locally free at a generic point of $\Spec(A)$. Using this, we describe the generic category $\Mod_A^{\gen}$ in \S \ref{s:gen}. Finally, we prove the noetherian result in \S \ref{s:noeth}.

\subsection{Notation}

We list the most important notation:

\begin{description}[align=right,labelwidth=2.5cm,leftmargin=!]
\item [$\zeta$] a fixed square root of $-1$ in $\bC$
\item [$\bV$] the complex super vector space with basis $\{e_i,f_i\}_{i \ge 1}$
\item [$\alpha$] the isomeric structure on $\bV$ given by $\alpha(e_i)=f_i$
\item [$\fq$] the isomeric algebra associated to $(\bV, \alpha)$
\item [$2^{-1}(-\otimes -)$] the half tensor product (see \S \ref{ss:qvec})
\item [$\bU$] the half-tensor product $2^{-1}(\bV \otimes \bV)$, a representation of $\fq \times \fq$
\item [$A$] the $\fq \times \fq$-algebra $\Sym(\bU)$
\item [$\bT_{\lambda}$] the isomeric Schur functor associated to the strict partition $\lambda$
\end{description}

\section{Isomeric algebras} \label{s:isomeric}

\subsection{Super vector spaces}

A {\bf super vector space} is a $\bZ/2$-graded complex vector space $V=V_0 \oplus V_1$. For a homogeneous element $x \in V$, we write $\vert x \vert \in \bZ/2$ for its degree. We let $\bC^{n|m}$ be the super vector space with degree~0 part $\bC^n$ and degree~1 part $\bC^m$. We let $e_1, \ldots, e_n, f_1, \ldots, f_m$ be the standard basis for $\bC^{n|m}$, where the $e_i$'s have degree~0 and the $f_i$'s have degree~1. We let $\bV=\bC^{\infty|\infty}=\bigcup_{n \ge 1} \bC^{n|n}$, and sometimes write $\bW$ for a copy of $\bV$. Given a super vector space $V$ and $k \in \bZ/2$, we let $V[k]$ be the super vector space defined by $V[k]_i=V_{k+i}$. (We remark that $V[1]$ is often denote $\Pi(V)$ in the literature, e.g., in \cite{chengwang}.)

Let $V$ and $W$ be super vector spaces. We let $\Hom(V,W)$ denote the set of all linear maps $V \to W$. We let $\Hom(V,W)_k$ denote the subset of $\Hom(V,W)$ consisting of maps $f$ such that $f(V_i) \subset W_{i+k}$ for all $i$. Then $\Hom(V,W)=\Hom(V,W)_0 \oplus \Hom(V,W)_1$, and so $\Hom(V,W)$ is naturally a super vector space.

We let $\SVec$ denote the category whose objects are super vector spaces and whose maps are all linear maps. We let $\SVec^{\circ}$ be the subcategory where the morphisms are homogeneous of even degree.

Given super vector spaces $V$ and $W$, let $V \otimes W$ be the usual tensor product of $V$ and $W$, endowed with the usual grading. Given homogeneous linear maps $f \colon V \to V'$ and $g \colon W \to W'$, we let $f \otimes g$ be the linear map $V \otimes W \to V' \otimes W'$ defined on homogeneous elements by
\begin{equation} \label{eq:sign-rule}
(f \otimes g)(v \otimes w) = (-1)^{|g||v|} f(v) \otimes g(w).
\end{equation}
This construction endows $\SVec^{\circ}$ with a monoidal structure. This monoidal structure admits a symmetry $\tau$ defined by $\tau(x \otimes y)=(-1)^{\vert x \vert \vert y \vert} y \otimes x$. We note that $\otimes$ does not define a monoidal structure on $\SVec$ in the usual sense, as it does not even define a functor $\SVec \times \SVec \to \SVec$ due to \eqref{eq:sign-rule}; however, it does define a monoidal structure on $\SVec$ in the sense of supercategories, as we now discuss.

\subsection{Supercategories}

A {\bf supercategory} is a category enriched in super vector spaces, or, more precisely, the category $\SVec^{\circ}$. Concretely, a supercategory is just a linear category in which there is a notion of even and odd homogeneous morphism, satisfying some rules; most categories built out of super vector spaces will therefore be supercategories. We refer to \cite[\S 2]{supermon} as a general reference, though this concept has been discussed elsewhere as well (e.g., \cite{brundan}). Given a supercategory $\cC$, we let $\cC^{\circ}$ be the ordinary category with the same objects and with $\Hom_{\cC^{\circ}}(X,Y)=\Hom_{\cC}(X,Y)_0$.

There is a natural notion of product of supercategories, which leads to the notion of a monoidal supercategory; see \cite[\S 3.1]{supermon} or \cite[Definition~1.4]{brundan}. If $(\cC, \otimes)$ is a monoidal supercategory then $\otimes$ induces a monoidal operation on $\cC^{\circ}$. The monoidal product $\otimes$ is defined on all pairs of morphisms, and satisfies the sign rule \eqref{eq:sign-rule}. The notion of a symmetry on a monoidal supercategory is defined in the usual manner. (We note that the paper \cite{supermon} is devoted to studying the notion of a \emph{supersymmetry} on a monoidal supercategory, which is more complicated than the notion of a symmetry. We will not need supersymmetries in this paper.) The basic example of a monoidal supercategory is $\SVec$ with the monoidal operation $\otimes$ discussed above; $\tau$ defines a symmetry on this monoidal structure.

\subsection{Isomeric vector spaces} \label{ss:qvec}

A {\bf isomeric vector space} is a pair $(V,\alpha)$ consisting of a super vector space $V$ together with an odd endomorphism $\alpha \colon V \to V$ such that $\alpha^2 = 1$. We endow $\bC^{n|n}$ with an isomeric structure by $\alpha(e_i)=f_i$ and $\alpha(f_i)=e_i$; we call this the standard isomeric structure. We similarly define the standard isomeric structure on $\bV=\bC^{\infty|\infty}$. Every finite dimensional isomeric vector space is isomorphic to some $\bC^{n|n}$ equipped with the standard isomeric structure. A homogeneous morphism $f \colon (V,\alpha) \to (W,\beta)$ of isomeric vector spaces is a homogeneous linear map $f \colon V \to W$ such that $f \alpha = (-1)^{\vert f \vert} \beta f$; a general morphism is a sum of homogeneous morphisms. We let $\QVec$ denote the supercategory of isomeric vector spaces.

Let $(V,\alpha)$ and $(W,\beta)$ be two isomeric vector spaces. Then $\alpha \otimes \beta$ is an even endomorphism of $V \otimes W$ that squares to $-1$; the sign comes from \eqref{eq:sign-rule}. Fix once and for all a square root of $-1$ in $\bC$, denoted $\zeta$. We define the {\bf half tensor product} of $(V,\alpha)$ and $(W,\beta)$, denoted $2^{-1} (V \otimes W)$, to be the $\zeta$ eigenspace of $\alpha \otimes \beta$. We note that $\alpha \otimes 1$ induces an isomorphism between the $\zeta$ and $-\zeta$ eigenspaces, and so we have a natural isomorphism $V \otimes W \cong 2^{-1}(V \otimes W) \otimes \bC^{1|1}$. Abstractly, the half tensor product can be understood using the theory of supersymmetric monoidal categories developed in \cite{supermon}, but this perspective will not be needed in this paper.

\subsection{The isomeric Lie superalgebra} \label{ss:isomeric}

Let $(V,\alpha)$ be a finite dimensional isomeric vector space. The {\bf isomeric Lie superalgebra} $\fq(V)=\fq(V,\alpha)$ is the set of endomorphisms of $(V,\alpha)$ in the category $\QVec$. It forms a sub Lie superalgebra of the general linear Lie superalgebra $\gl(V)$. We let $\fq_n=\fq(\bC^{n|n})$. Explicitly, an element of $\fq_n$ can be described as a matrix of the form
\begin{displaymath}
\begin{pmatrix} a & b \\ -b & a \end{pmatrix}
\end{displaymath}
where $a$ and $b$ are $n \times n$ matrices. We define the {\bf Chevalley automorphism} $\tau$ of $\fq_n$ by
\begin{displaymath}
\tau \begin{pmatrix} a & b \\ -b & a \end{pmatrix}
= -\begin{pmatrix} a^t & \zeta b^t \\ -\zeta b^t & a^t \end{pmatrix}
\end{displaymath}
where $(-)^t$ denotes the usual matrix transpose. One verifies that this is a Lie superalgebra homomorphism. Note that
\begin{displaymath}
\tau^2 \begin{pmatrix} a & b \\ -b & a \end{pmatrix}
= \begin{pmatrix} a & -b \\ b & a \end{pmatrix},
\end{displaymath}
and so $\tau$ has order four. We let $\fq=\fq_{\infty}=\bigcup_{n \ge 1} \fq_n$ be the infinite isomeric superalgebra. It can be represented by matrices as above, and we define $\tau$ on $\fq$ as above.

\subsection{Polynomial representations} \label{ss:polrep}

The space $\bC^{n|n}$ is naturally a representation of $\fq_n$, and called the standard representation. We say that a representation of $\fq_n$ is {\bf polynomial} if it occurs as a subquotient of a (possibly infinite) direct sum of tensor powers of the standard representation. We let $\Rep^{\pol}(\fq_n)$ denote the supercategory of such representations. It is easily seen to be closed under tensor product. It follows from Schur--Sergeev duality \cite[\S 3.4]{chengwang} that the abelian category $\Rep^{\pol}(\fq_n)^{\circ}$ is semi-simple. Recall that a {\bf strict partition} of $n$ is a partition $\lambda$ of $n$ such that $\lambda_1>\lambda_2>\cdots>\lambda_r>0$; with this notation, we write $\vert \lambda \vert=n$ and $\ell(\lambda)=r$. For each strict partition $\lambda$ with $\ell(\lambda) \le n$, there is a simple object $T_{\lambda,n}$ of $\Rep^{\pol}(\fq_n)^{\circ}$, and every simple object is isomorphic to some $T_{\lambda,n}$ or $T_{\lambda,n}[1]$. To be a bit more precise, define
\begin{displaymath}
\delta(\lambda) = \begin{cases} 0 & \text{if $\ell(\lambda)$ is even} \\
1 & \text{if $\ell(\lambda)$ is odd} \end{cases}.
\end{displaymath}
Then there is an even isomorphism $T_{\lambda,n} \cong T_{\lambda,n}[1]$ if $\delta(\lambda)=1$, and these are the only isomorphisms among objects of the form $T_{\mu,n}[k]$ with $k \in \bZ/2$.

The above discussion applies equally well to the infinite isomeric algebra $\fq$. Precisely, we define a representation of $\fq$ to be polynomial if it occurs as a subquotient of a direct sum of tensor powers of the standard representation $\bV$. The category $\Rep^{\pol}(\fq)^{\circ}$ is again semi-simple and closed under tensor products. Given a strict partition $\lambda$, there is a simple object $T_{\lambda}$, and every simple object is isomorphic to some $T_{\lambda}$ or $T_{\lambda}[1]$. We define the notion of polynomial representation of $\fq \times \fq$ in an analogous manner. Similar results hold for it (e.g., its simple objects have the form $T_{\lambda} \otimes T_{\mu}$ or $T_{\lambda} \otimes T_{\mu} [1]$ with $\lambda$ and $\mu$ strict partitions).

\subsection{Polynomial functors}

Consider the supercategory $\Fun(\QVec,\SVec)$ of all functors $\QVec \to \SVec$. The subcategory $\Fun(\QVec,\SVec)^{\circ}$ consisting of even degree morphisms is abelian. Define a functor $T_n \colon \QVec \to \SVec$ by $T_n(V,\alpha) = V^{\otimes n}$. By \cite[Theorem 3.49]{chengwang}, this functor is semisimple, and decomposes in $\Fun(\QVec,\SVec)^{\circ}$ as a direct sum of simple functors $\bT_\lambda$ (and their shifts) indexed by strict partitions $\lambda$ of $n$. We say that a functor $\QVec \to \SVec$ is {\bf polynomial} if it is a subquotient of a (possibly infinite) direct sum of the functors $T_n$ in the category $\Fun(\QVec,\SVec)^{\circ}$. We denote by $\QPol$ the full subcategory of $\Fun(\QVec,\SVec)$ spanned by polynomial functors. The category $\QPol^{\circ}$ is a semisimple abelian category, and every simple object is isomorphic to some $\bT_{\lambda}$ or $\bT_{\lambda}[1]$. The tensor product of two polynomial functors is again a polynomial functor.

We have an evaluation functor $\QPol \to \Rep^{\pol}(\fq_n)$ given by $F \mapsto F(\bC^{n|n})$. For a strict partition $\lambda$, we have
\begin{displaymath}
\bT_{\lambda}(\bC^{n|n}) = \begin{cases} T_{\lambda,n} & \text{if $n \ge \ell(\lambda)$} \\
0 & \text{otherwise} \end{cases}.
\end{displaymath}
We similarly have an evaluation functor $\QPol \to \Rep^{\pol}(\fq)$ that takes $\bT_{\lambda}$ to $T_{\lambda}$. This is an equivalence of supercategories.

We also have a notion of bivariate polynomial functors. Precisely, these are subquotients of direct sums of functors $T_{m,n} \colon \QVec \times \QVec \to \SVec$ given by $T_{m,n}(V,W) = V^{\otimes m} \otimes W^{\otimes n}$.We let $\QPol^{(2)}$ denote the category of such functors. It has similar properties to $\QPol$.

\subsection{Isomeric algebras}

An {\bf isomeric algebra} is a commutative algebra object in the symmetric monoidal category $\QPol^{\circ} \cong \Rep^{\pol}(\fq)^{\circ}$. A module for such an algebra is a module object in the usual sense. If $M$ is a module over the isomeric algebra $B$ then $M(V)$ is a $B(V)$-module for all isomeric spaces $V$. In this article, we are mostly interested in the bivariate analogue, i.e., commutative algebras in $\QPol^{(2)}$; we will also call them isomeric algebras and often omit the adjective ``bivariate.''

Let $M$ be a polynomial functor. We define $\ell(M)$ to be the supremum of the $\ell(\lambda)$ over $\lambda$ for which $\bT_{\lambda}$ occurs as a constituent of $M$. We make a similar definition for polynomial representations. We say that $M$ is {\bf bounded} if $\ell(M)<\infty$. The simple functor $\bT_\nu$ appears in $\bT_\lambda \otimes \bT_\mu$ if and only if $P_\nu$ has nonzero coefficient when the product $P_\lambda P_\mu$ is expanded in the basis of Schur P-functions (the Schur Q-functions are given by $Q_\lambda = 2^{\ell(\lambda)} P_\lambda$ \cite[III, (8.7)]{macdonald} and these are, up to powers of 2, the characters of $\bT_\lambda$ \cite[Theorem 3.51]{chengwang}). It follows from \cite[III, (8.18)]{macdonald} that the tensor product of two bounded representations is again bounded. We extend the notion of bounded to bivariate polynomial functors in the obvious way.

An isomeric algebra $A$ is finitely generated if it is a quotient of $\Sym(V)$ for some finite length object $V$ of $\QPol$ (or $\QPol^{(2)}$), and an $A$-module is finitely generated if it is a quotient of $A \otimes W$ for some finite length object $W$. We say that $A$ is {\bf noetherian} if every finitely generated $A$-module is noetherian, i.e., its submodules satisfy the ascending chain condition.

\begin{proposition} \label{prop:bounded-noeth}
Every finitely generated bounded isomeric algebra is noetherian.
\end{proposition}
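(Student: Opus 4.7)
The plan is to reduce to the classical Hilbert basis theorem by evaluating at a sufficiently large finite-dimensional queer vector space. Write $A$ as a quotient of $\Sym(V)$ for some finite length $V \in \QPol^{(2)}$. Let $M$ be a finitely generated $A$-module, realized as a quotient of $A \otimes W$ for some finite length $W \in \QPol^{(2)}$. Since the tensor product of bounded polynomial functors is bounded (as noted just before the statement, using \cite[III, (8.18)]{macdonald}), $M$ is bounded; moreover any subfunctor $M' \subseteq M$ is a constituent of $M$, so $\ell(M') \le \ell(M) \le N$ for some integer $N$ that depends only on $M$. Thus the ACC for submodules of $M$ becomes an ACC among polynomial subfunctors all of length at most $N$.

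Fix such an $N$ and consider the evaluation functor $\mathrm{ev}_N \colon \QPol^{(2)} \to \Rep^{\pol}(\fq_N \times \fq_N)$ sending $F$ to $F(\bC^{N|N}, \bC^{N|N})$. From the formula $\bT_\lambda(\bC^{N|N}) = T_{\lambda,N}$ for $\ell(\lambda) \le N$ and $0$ otherwise, one sees that on the full subcategory of bounded functors of length at most $N$ the functor $\mathrm{ev}_N$ is exact and sends distinct simple constituents to distinct nonzero simples; in particular it is faithful and reflects equality of subobjects. Applying $\mathrm{ev}_N$, the algebra $\mathrm{ev}_N(A)$ becomes a quotient of $\Sym(V(\bC^{N|N}, \bC^{N|N}))$ by an $\fq_N \times \fq_N$-stable ideal, and $V(\bC^{N|N}, \bC^{N|N})$ is finite dimensional because $V$ has finite length. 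Hence $\mathrm{ev}_N(A)$ is a finitely generated supercommutative superalgebra.

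By the super version of Hilbert's basis theorem, $\mathrm{ev}_N(A)$ is a noetherian ring, so the finitely generated module $\mathrm{ev}_N(M)$ satisfies the ACC on all submodules, a fortiori on its $\fq_N \times \fq_N$-stable submodules. Any ascending chain $M_1 \subseteq M_2 \subseteq \cdots$ of $A$-submodules of $M$ therefore evaluates to a stabilizing chain of equivariant submodules of $\mathrm{ev}_N(M)$, and since $\mathrm{ev}_N$ reflects equality of subobjects among bounded length-$\le N$ functors, the original chain stabilizes.

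The main technical point is the reflection-of-subobjects property of $\mathrm{ev}_N$ on bounded length-$\le N$ functors: this amounts to the assertion that the equivalence $\QPol \simeq \Rep^{\pol}(\fq)$ factors, on the length-$\le N$ part, through evaluation at $\bC^{N|N}$, and it follows from the semisimple classification of simple polynomial functors. Once this is in place, the argument is a clean reduction to Hilbert's basis theorem; there are no further conceptual obstacles.
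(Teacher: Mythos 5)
Your proof is correct and follows essentially the same route as the paper's: bound the finitely generated module $M$, evaluate at a sufficiently large $(\bC^{N|N},\bC^{N|N})$, use that evaluation detects equality of subobjects in the length-$\le N$ range, and conclude from noetherianity of the finitely generated supercommutative algebra $A(\bC^{N|N},\bC^{N|N})$. No substantive differences.
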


\begin{proof}
If $A$ is bounded, then so is $A \otimes W$ for any finite length object $W$, and hence so is any finitely generated $A$-module $M$. If $\ell(M) \le r$, then given submodules $N \subset N' \subset M$, we have $N = N'$ if and only if $N(\bC^{r|r}) = N'(\bC^{r|r})$, which means it suffices to check the ascending chain condition for submodules of $M(\bC^{r|r})$. But this is a finitely generated module over a finitely generated supercommutative algebra $A(\bC^{r|r})$ (in the usual sense) and hence is noetherian. The same argument applies in the bivariate case.
\end{proof}

\begin{remark} \label{rmk:single-var}
The isomeric algebra $\Sym(\bV)$ is bounded since $\Sym^n(\bV) = \bT_n(\bV)$. It is therefore noetherian by the above proposition.
\end{remark}

\subsection{The isomeric algebra $A$} \label{ss:A}

Recall that $\bV=\bC^{\infty|\infty}$ with basis $\{e_i,f_i\}_{i \ge 1}$. We let $\bU=2^{-1}(\bV \otimes \bV)$. Recall that this is the $\zeta$-eigenspace of $\alpha \otimes \alpha$ acting on $\bV \otimes \bV$. Define
\begin{align*}
v_{i,j} &= (1+\zeta) e_i \otimes e_j+ (1-\zeta) f_i \otimes f_j \\
w_{i,j} &= (1+\zeta) e_i \otimes f_j + (1-\zeta) f_i \otimes e_j
\end{align*}
Then $\{v_{i,j},w_{i,j}\}_{i,j \ge 1}$ is a basis of $\bU$, with $v_{i,j}$ even and $w_{i,j}$ odd. The algebra $\fq \times \fq$ naturally acts on $\bV \otimes \bV$ and carries $\bU$ into itself.

Let $A=\Sym(\bU)$. This is a polynomial superalgebra on elements $\{x_{i,j},y_{i,j}\}_{i \ge 1}$, where $x_{i,j}$ corresponds to $v_{i,j}$ (and has degree~0) and $y_{i,j}$ to $w_{i,j}$ (and has degree~1). This is a bivariate isomeric algebra, and the main algebra of interest in this paper. By the isomeric analog of the Cauchy decomposition (\cite[Theorem 3.1]{chengwang2}, the infinite case follows by taking a direct limit), we have the irreducible decomposition
\begin{displaymath}
A = \bigoplus_{\lambda \in \Lambda} 2^{-\delta(\lambda)} (T_{\lambda} \otimes T_{\lambda}).
\end{displaymath}
Here $\delta(\lambda)$ is defined in \S \ref{ss:polrep}. In particular, we see that $A$ is multiplicity free as a $\fq \times \fq$-representation.

\section{Classification of ideals in $A$} \label{s:class}

\subsection{Overview} \label{ss:class-overview}

The goal of this section is to classify the ideals in the bivariate isomeric algebra $A$ that are stable by $\fq \times \fq$. We now explain the basic plan of attack (notation and definitions are explained below).
\begin{itemize}
\item Schur--Sergeev duality gives an equivalence of categories $\Mod_{\cC} = \Rep^{\pol}(\fq)$, where $\cC$ is a category built out of the Hecke--Clifford algebras. In fact, this is an equivalence of symmetric monoidal categories. There is a similar equivalence $\Mod_{\cC \boxtimes \cC}=\Rep^{\pol}(\fq \times \fq)$. It follows that $A=\Sym(\bU)$ corresponds to the $\cC \boxtimes \cC$ algebra $R'=\Sym(U)$, where $U$ corresponds to $\bU$. Thus it suffices to classify ideals in $R'$.
\item There is a natural equivalence $\cC^{\op} \cong \cC$ of symmetric monoidal categories that we call transpose. Let $R$ be the $\cC^{\op} \boxtimes \cC$ algebra obtained by applying transpose to the first factor in $R'$. Then it suffices to classify the ideals of $R$.
\item For any category $\cD$, there is a canonical $\cD^{\op} \boxtimes \cD$ module given by $(x,y) \mapsto \Hom_{\cD}(x,y)$. It turns out that $R$ is this canonical module for $\cC^{\op} \boxtimes \cC$. By general reasons, it follows that ideals of $R$ correspond to tensor ideals of $\cC$. It thus suffices to classify these.
\item We establish a general classification of tensor ideals in monoidal supercategories of a particular form (that encompasses $\cC$). One of the key assumptions is a certain form of the Pieri rule.
\end{itemize}
This section essentially follows the above plan in reverse. We thus begin with an abstract classification of tensor ideals and work our way back to $A$.

\subsection{Semisimple superalgebras}

Let $B$ be a finite dimensional semisimple $\bC$-superalgebra. By ``semisimple,'' we mean that the abelian category $\Mod_B^{\circ}$ is semisimple. Let $\Lambda(B)$ be the set of isomorphism classes of left $B$-modules, where we allow odd isomorphisms. For $\lambda \in \Lambda(B)$, we let $S_{\lambda}$ be a representative simple module. Thus every simple object of $\Mod_B^{\circ}$ is isomorphic to some $S_{\lambda}$ or $S_{\lambda}[1]$. We let $\wt{\rK}(B)$ be the Grothendieck group of the subcategory of $\Mod_B^{\circ}$ spanned by finite length modules, and we let $\rK(B)$ be the quotient of $\wt{\rK}(B)$ by the relations $[M]=[M[1]]$, i.e., a module and its shift represent the same class in $\rK(B)$. (The construction $\rK(-)$ is discussed in \cite[\S 2.6]{supermon}, where it is denoted $\rK_+(-)$.) Then $\rK(B)$ is the free abelian group on the classes $[S_{\lambda}]$ with $\lambda \in \Lambda(B)$. We let $J^{\lambda} \subset B$ be the $S_{\lambda}$-isotypic piece of $B$, i.e., the sum of all left ideals of $B$ isomorphic to $S_{\lambda}$ or $S_{\lambda}[1]$.

\begin{proposition} \label{prop:Jlambda}
We have the following:
\begin{enumerate}
\item $J^{\lambda}$ is a $2$-sided ideal of $B$.
\item $J^{\lambda}$ is simple as a $(B,B)$-bimodule.
\item $B = \bigoplus_{\lambda \in \Lambda(B)} J^{\lambda}$ as a $(B,B)$-bimodule.
\end{enumerate}
\end{proposition}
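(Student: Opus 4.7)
The plan is to adapt the standard Wedderburn-style isotypic decomposition to the super setting, keeping careful track of parity shifts. First, I would use semisimplicity of $\Mod_B^{\circ}$ to decompose the regular left module $B$ as a direct sum of simple left submodules, each isomorphic to some $S_{\lambda}$ or $S_{\lambda}[1]$. Grouping these by isomorphism class $\lambda \in \Lambda(B)$ produces the internal direct sum $B = \bigoplus_{\lambda} J^{\lambda}$ at the level of left $B$-modules. Once (a) is verified, this is automatically a decomposition as $(B,B)$-bimodules, giving (c).

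For (a), I would fix a homogeneous $b \in B$ and consider right multiplication $r_b \colon B \to B$, $x \mapsto xb$, as a left $B$-module morphism of degree $\vert b \vert$. Applied to any simple left submodule $S \subseteq J^{\lambda}$, the image is a quotient of $S$, so it is either zero or a simple left submodule isomorphic to $S$ up to the parity shift coming from $\vert b \vert$. Either way it still belongs to the isotypic class $\lambda$, so $r_{b}(S) \subseteq J^{\lambda}$. Summing over simple left submodules of $J^{\lambda}$ and then over homogeneous $b$ would yield $J^{\lambda} \cdot B \subseteq J^{\lambda}$.

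For (b), I would let $N \subseteq J^{\lambda}$ be a nonzero sub-bimodule. By semisimplicity of $\Mod_{B}^{\circ}$ the left submodule $N$ contains some simple left submodule $S$. For any other simple left submodule $S' \subseteq J^{\lambda}$, one picks a homogeneous left $B$-linear isomorphism $\phi \colon S \to S'$ and extends it by zero on a left-module complement of $S$ in $B$ to obtain a (possibly odd) left $B$-module endomorphism $\tilde\phi$ of $B$. Any such endomorphism of the regular module is right multiplication by $\tilde\phi(1)$, so $S' = \tilde\phi(S) \subseteq S \cdot B \subseteq N$. Hence $N$ would contain every simple left submodule of $J^{\lambda}$, forcing $N = J^{\lambda}$. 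The main obstacle here is essentially super bookkeeping: one must verify that $\Mod_B^{\circ}$-semisimplicity is enough to extend an odd isomorphism $\phi$ (it is, since parity shift is an autoequivalence of $\Mod_B$) and that the classical characterization of left-module endomorphisms of $B$ as right multiplications survives the sign conventions. Beyond these checks, no genuinely new ingredients are needed over the classical Wedderburn argument.
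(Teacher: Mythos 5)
Your argument is correct and is essentially the paper's own approach: the paper simply cites the classical Wedderburn-style isotypic decomposition (Lang, XVII.4--XVII.5) and notes that the proofs carry over to the super case, which is exactly the adaptation you carry out, including the only genuinely super points (parity-shifted simples lying in the same isotypic piece, and endomorphisms of the regular module being sign-twisted right multiplications). No gaps beyond the routine bookkeeping you already flag.
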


\begin{proof}
These results are proven for classical semisimple algebras in \cite[XVII.4, XVII.5]{Lang}. We just note that the proofs carry over to the super case as well.
\end{proof}

\begin{proposition} \label{prop:Jlambda2}
Let $J$ be a left ideal of $B$ and let $J'$ be the $2$-sided ideal it generates. Then $J'=\bigoplus_{\lambda \in S} J^{\lambda}$ where $S$ is the set of $\lambda \in \Lambda(B)$ for which $S_{\lambda}$ or $S_{\lambda}[1]$ appears as a simple constituent of $J$.
\end{proposition}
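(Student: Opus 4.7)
The plan is to prove the two inclusions separately, using Proposition~\ref{prop:Jlambda} as the essential input and exploiting that $\Mod_B^{\circ}$ is semisimple.

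For $\bigoplus_{\lambda \in S} J^{\lambda} \subseteq J'$, I would fix $\lambda \in S$ and argue as follows. Since $\Mod_B^{\circ}$ is semisimple, $J$ decomposes as a direct sum of simple left $B$-modules; by the definition of $S$, at least one of these summands $L$ is isomorphic to $S_{\lambda}$ or $S_{\lambda}[1]$. By the definition of $J^{\lambda}$ as the $S_{\lambda}$-isotypic piece, $L \subseteq J^{\lambda}$, so $L \subseteq J \cap J^{\lambda} \subseteq J' \cap J^{\lambda}$. Now $J' \cap J^{\lambda}$ is a nonzero two-sided ideal of $B$ contained in $J^{\lambda}$, so by the bimodule-simplicity of $J^{\lambda}$ (Proposition~\ref{prop:Jlambda}(b)) it must equal $J^{\lambda}$. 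This gives $J^{\lambda} \subseteq J'$ for every $\lambda \in S$.

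For the reverse inclusion $J' \subseteq \bigoplus_{\lambda \in S} J^{\lambda}$, I would use the bimodule decomposition $B = \bigoplus_{\lambda \in \Lambda(B)} J^{\lambda}$ of Proposition~\ref{prop:Jlambda}(c). Viewed as a direct sum of left $B$-modules, the summand $J^{\lambda}$ is the entire $\{S_{\lambda}, S_{\lambda}[1]\}$-isotypic component of $B$. Since $J$ is a semisimple left submodule, it equals the direct sum of its isotypic components, and the component for $\lambda$ lives inside $J^{\lambda}$ and is nonzero only when $\lambda \in S$. Thus $J \subseteq \bigoplus_{\lambda \in S} J^{\lambda}$, and since the right-hand side is already a two-sided ideal it also contains the two-sided ideal $J'$ generated by $J$.

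I do not anticipate a serious obstacle here; the only point requiring care is that the isotypic decomposition be set up correctly in the super setting so that the shifts $S_{\lambda}[1]$ do not cause any issue. This is built into the conventions, since $J^{\lambda}$ is defined to absorb both $S_{\lambda}$- and $S_{\lambda}[1]$-summands and $\Lambda(B)$ identifies $S_{\lambda}$ with $S_{\lambda}[1]$. With those conventions in place the classical arguments (as in \cite[XVII.4, XVII.5]{Lang}) go through verbatim, mirroring the proof of Proposition~\ref{prop:Jlambda} itself.
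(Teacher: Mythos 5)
Your proof is correct, and it differs a bit in its inputs from the paper's. The paper disposes of this in one line by citing the structure theory of semisimple superalgebras: $B$ is a product of the endomorphism superalgebras of its simples, and each factor, as a left module, is a multiple of the corresponding simple, from which one reads off which factors a left ideal meets and hence which two-sided ideal it generates. You instead derive both inclusions directly from Proposition~\ref{prop:Jlambda}: part (c) (together with the observation that $J^{\lambda}$ is exactly the $\{S_{\lambda},S_{\lambda}[1]\}$-isotypic component of the left regular module) gives $J \subseteq \bigoplus_{\lambda \in S} J^{\lambda}$ and hence $J' \subseteq \bigoplus_{\lambda \in S} J^{\lambda}$ since the right side is two-sided, while part (b) (bimodule simplicity) forces $J^{\lambda} = J' \cap J^{\lambda} \subseteq J'$ for each $\lambda \in S$. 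What each buys: your route is self-contained modulo Proposition~\ref{prop:Jlambda} and never invokes the Wedderburn-type product decomposition in the super setting, whereas the paper's citation is shorter and makes the ring structure explicit; the underlying content is the same. One small point of care (which applies equally to the paper's version): the argument presumes $J$, and hence $J'$ and $J' \cap J^{\lambda}$, are homogeneous, so that $J$ decomposes into simples in $\Mod_B^{\circ}$ and bimodule simplicity applies; this is consistent with the paper's conventions on (super)ideals and with how the proposition is used later, so it is not an issue, but it is worth stating.
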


\begin{proof}
This follows from the fact that a semisimple superalgebra $B$ is isomorphic to a direct product of the endomorphism algebras of its simple modules and that each endomorphism algebra, as a left module, is isomorphic to a direct sum of copies of the corresponding simple (see \cite[\S 3.1]{chengwang}).
\end{proof}

\subsection{A classification of tensor ideals}

The goal of this section is to classify left tensor ideals in certain monoidal supercategories. We first recall the relevant definitions.

\begin{definition} \label{def:cat-ideal}
Let $\cC$ be a supercategory. An {\bf ideal} of $\cC$ is a rule $I$ assigning to each pair of objects $(x,y)$ a homogeneous subspace $I(x,y)$ of $\Hom_\cC(x,y)$ such that the following conditions hold:
\begin{itemize}
\item Given $f \in I(x,y)$ and $g \in \Hom_{\cC}(y,z)$, we have $g \circ f \in I(x,z)$.
\item Given $f \in I(x,y)$ and $h \in \Hom_{\cC}(w,x)$, we have $f \circ h \in I(w,y)$.
\end{itemize}
We note that $I(x,x)$ is a 2-sided homogeneous ideal of the superalgebra $\End_{\cC}(x)$ for every object $x$.
\end{definition}

\begin{definition} \label{def:tensor-ideal}
Let $(\cC, \amalg)$ be a monoidal supercategory. A {\bf left tensor ideal} of $\cC$ is an ideal $I$ of $\cC$ satisfying the following condition:
\begin{itemize}
\item Given $f \in I(x,y)$ and an object $z$, we have $\id_z \amalg f \in I(z \amalg x, z \amalg y)$.
\end{itemize}
We note that if $f \in I(x,y)$ and $g \in \Hom_{\cC}(x', y')$ then $g \amalg f \in I(x \amalg x', y \amalg y')$.
\end{definition}

Fix, for the remainder of this section, a monoidal supercategory $(\cC, \amalg)$. We now introduce a number of conditions on $\cC$.
\begin{itemize}
\item[(C1)] We have a bijection $\bN \to \Ob(\cC)$, denoted $n \mapsto [n]$, such that $[n] \amalg [m]=[n+m]$.
\item[(C2)] We have $\Hom_{\cC}([n], [m])=0$ for $n \ne m$.
\item[(C3)] The algebra $B_n=\End_{\cC}([n])$ is finite dimensional and semisimple.
\end{itemize}
We note that the monoidal operation induces a superalgebra homomorphism
\begin{displaymath}
\iota_{n,m} \colon B_n \otimes B_m \to B_{n+m}
\end{displaymath}
for all $n,m$. In fact, giving $\cC$, subject to (C1)--(C3), is equivalent to giving the sequence $(B_n)_{n \ge 0}$ of semisimple superalgebras together with the $\iota$ maps, such that certain conditions hold.

Let $\Lambda(\cC)=\coprod_{n \ge 0} \Lambda(B_n)$. Let $\rK(B_n)$ be the Grothendieck group discussed in the previous section, and let $\rK(\cC)=\bigoplus_{n \ge 0} \rK(B_n)$. We note that $\rK(\cC)$ is a free abelian group with basis $[S_{\lambda}]$ with $\lambda \in \Lambda$. The monoidal structure induces a ring structure on $\rK(\cC)$; precisely, if $M$ is a $B_m$-module and $N$ is a $B_n$-module then
\begin{displaymath}
[M] \cdot [N] = [B_{n+m} \otimes_{B_m \otimes B_n} (M \otimes N)],
\end{displaymath}
where the tensor product is formed via $\iota_{m,n}$. Note that this tensor product is exact in $M$ and $N$ since the $B$'s are semisimple, and thus is well-defined on the Grothendieck group.

Our next assumption is an analog of the Pieri rule:
\begin{itemize}
\item[(C4)] There is a partial order $\subset$ on $\Lambda(\cC)$ such that for any $\lambda \in \Lambda(B_n)$ we have
\begin{displaymath}
[B_1] \cdot [S_{\lambda}] = \sum_{\lambda \subset \mu,\, \mu \in \Lambda(B_{n+1})} c_{\lambda,\mu} [S_{\mu}]
\end{displaymath}
for \emph{positive} integers $c_{\lambda,\mu}$.
\end{itemize}
For $\lambda \in \Lambda_n$, we let $J^{\lambda} \subset B_n$ be the $S_{\lambda}$-isotypic piece of $B_n$, as in the previous section. If $I$ is a left tensor ideal of $\cC$ then $I_n=I([n],[n])$ is a 2-sided ideal of $B_n$; moreover, $I$ is determined by the $I_n$'s due to (C2).

\begin{theorem} \label{thm:ideal-structure1}
Suppose conditions (C1)--(C4) above hold. Let $\lambda \in \Lambda(B_n)$ and let $I^{\lambda}$ be the left tensor ideal of $\cC$ generated by $J^{\lambda}$. Then $I^{\lambda}_m=\bigoplus_{\lambda \subset \mu,\, \mu \in \Lambda(B_m)} J^{\mu}$ for any $m$.
\end{theorem}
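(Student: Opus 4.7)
My plan is to induct on $m$. Note first that for any $m$, the component $I^\lambda_m := I^\lambda([m],[m])$ is a 2-sided algebra ideal of $B_m$ (from the general categorical 2-sidedness in Definition \ref{def:cat-ideal}); by semisimplicity and Proposition \ref{prop:Jlambda} it is therefore a direct sum of isotypic pieces $J^\nu$, and the content of the theorem is to identify the indexing set. For $m<n$ (where $n=|\lambda|$) both sides vanish: generators in $J^\lambda \subseteq B_n$ produce, via left-tensoring with identities, morphisms only at levels $\ge n$, and (C2) forbids composing them into levels below $n$; meanwhile the right-hand sum is empty (assuming the partial order $\subset$ is rank-compatible, as is implicit in (C4)). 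For the base $m=n$, the only operations that preserve level $n$ are left and right composition by $B_n$, so $I^\lambda_n = B_n \cdot J^\lambda \cdot B_n = J^\lambda$, matching the right-hand side.

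The inductive step $m \to m+1$ for $m \ge n$ rests on the recursion
\[
I^\lambda_{m+1} \;=\; B_{m+1} \cdot \iota_{1,m}(B_1 \otimes I^\lambda_m) \cdot B_{m+1}.
\]
The $\supseteq$ inclusion is immediate from the left-tensor-ideal axioms; the $\subseteq$ inclusion uses the associativity $\id_{[k]} \amalg f = \id_{[1]} \amalg (\id_{[k-1]} \amalg f)$, noting that $\id_{[k-1]} \amalg f$ already lies in $I^\lambda_m$ by definition. Combined with the inductive hypothesis $I^\lambda_m = \bigoplus_{\lambda \subseteq \mu,\, \mu \in \Lambda(B_m)} J^\mu$, this exhibits $I^\lambda_{m+1}$ as the sum of the 2-sided ideals generated by the left $B_{m+1}$-modules $K_\mu := B_{m+1}\cdot\iota_{1,m}(B_1 \otimes J^\mu)$, one for each such $\mu$.

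The heart of the argument is to identify $K_\mu$ with the induced module $B_{m+1} \otimes_{B_1 \otimes B_m}(B_1 \otimes J^\mu)$. The canonical $B_{m+1}$-module map
\[
\varphi_\mu \colon B_{m+1} \otimes_{B_1 \otimes B_m}(B_1 \otimes J^\mu) \to B_{m+1}, \qquad x \otimes (b \otimes j) \mapsto x \cdot \iota_{1,m}(b \otimes j),
\]
surjects onto $K_\mu$, and I claim it is injective: since $B_1 \otimes B_m$ is semisimple by (C3), all of its modules are projective, so the functor $B_{m+1} \otimes_{B_1 \otimes B_m}(-)$ is exact, and $\varphi_\mu$ embeds into the canonical isomorphism $B_{m+1} \otimes_{B_1 \otimes B_m}(B_1 \otimes B_m) \cong B_{m+1}$ obtained by applying it to the inclusion $B_1 \otimes J^\mu \hookrightarrow B_1 \otimes B_m$. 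Hence the class of $K_\mu$ in $\rK(B_{m+1})$ equals
\[
[B_1] \cdot [J^\mu] \;=\; d_\mu \sum_{\mu\subset\nu,\; c_{\mu,\nu}>0} c_{\mu,\nu}\,[S_\nu]
\]
by (C4), where $d_\mu > 0$ is the multiplicity of $[S_\mu]$ in $[J^\mu]$. Proposition \ref{prop:Jlambda2} applied to the 2-sided ideal generated by $K_\mu$ then produces $\bigoplus_{\mu\subset\nu,\, c_{\mu,\nu}>0} J^\nu$.

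Summing over $\mu$ with $\lambda \subseteq \mu$ and using transitivity of $\subset$ — together with the (automatic, in the queer/strict-partition setting) assumption that the partial order is generated by its covering relations — gives $I^\lambda_{m+1} = \bigoplus_{\lambda \subseteq \nu,\, \nu \in \Lambda(B_{m+1})} J^\nu$, completing the induction. I expect the main obstacle in executing this plan to be the injectivity of $\varphi_\mu$: without it, $K_\mu$ could lose constituents relative to the induced module and the computation via (C4) would overcount, breaking the induction. Injectivity is precisely what the semisimplicity hypothesis (C3) is designed to deliver, through flatness of induction.
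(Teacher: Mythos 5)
Your proposal is correct and takes essentially the same approach as the paper: your one-step recursion is exactly the paper's operation $\Sigma$ (with $I^{\lambda}_m=\Sigma^{m-n}(J^{\lambda})$), and your identification of $K_\mu$ with the induced module via semisimplicity/flatness, followed by (C4) and Proposition~\ref{prop:Jlambda2}, is precisely the paper's Lemma~\ref{lem:ideal-structure1-1}. The poset-compatibility caveat you flag (interpolating a $\mu \in \Lambda(B_m)$ between $\lambda$ and any $\nu \in \Lambda(B_{m+1})$) is likewise implicit in the paper's iteration of that lemma and does hold for strict partitions under containment.
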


Given a 2-sided ideal $J$ of $B_n$, we let $\Sigma(J)$ be the 2-sided ideal of $B_{n+1}$ generated by $\iota_{1,n}(1 \otimes J)$. The following is the key observation:

\begin{lemma} \label{lem:ideal-structure1-1}
Given $\lambda \in \Lambda_n$, we have $\Sigma(J^{\lambda})=\bigoplus_{\lambda \subset \mu,\, \mu \in \Lambda(B_{n+1})} J^{\mu}$.
\end{lemma}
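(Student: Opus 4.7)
The plan is to use idempotents to realize $\Sigma(J^{\lambda})$ as the two-sided ideal in $B_{n+1}$ generated by a single idempotent, and then combine the Pieri-type hypothesis (C4) with Frobenius reciprocity to identify which simple bimodule summands $J^{\mu}$ occur.

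My first step would be to exploit semisimplicity of $B_n$ to write $J^{\lambda} = B_n \cdot e_{\lambda} = e_{\lambda} \cdot B_n$ for a central primitive even idempotent $e_{\lambda} \in B_n$, and to set $e = \iota_{1,n}(1 \otimes e_{\lambda}) \in B_{n+1}$. Because $\iota_{1,n}(1 \otimes B_n)$ contains the unit of $B_{n+1}$, we have $\iota_{1,n}(1 \otimes J^{\lambda}) = e \cdot \iota_{1,n}(1 \otimes B_n)$, so $\Sigma(J^{\lambda})$ coincides with the two-sided ideal generated by $e$, namely $B_{n+1} \cdot e \cdot B_{n+1}$. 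Using the bimodule decomposition $B_{n+1} = \bigoplus_{\mu} J^{\mu}$ from Proposition~\ref{prop:Jlambda} together with the simplicity of each $J^{\mu}$ as a bimodule, I would then conclude that $\Sigma(J^{\lambda}) = \bigoplus_{\mu \in S} J^{\mu}$, where $S$ consists of those $\mu$ for which the projection of $e$ to $J^{\mu}$ is nonzero; faithfulness of the $J^{\mu}$-action on $S_{\mu}$ recasts this as $S = \{\mu : e \text{ acts nontrivially on } S_{\mu}\}$.

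To identify $S$, I would view $S_{\mu}$ as a $B_n$-module via the composition $B_n \to B_1 \otimes B_n \to B_{n+1}$ (first $x \mapsto 1 \otimes x$, then $\iota_{1,n}$), under which $e$ acts as $e_{\lambda}$. Thus $e$ acts nontrivially on $S_{\mu}$ iff $S_{\lambda}$ (or its shift) is a composition factor of the restriction $S_{\mu}|_{B_n}$. By Frobenius reciprocity for the semisimple extension $B_n \to B_{n+1}$, this is equivalent to $S_{\mu}$ occurring as a composition factor of $B_{n+1} \otimes_{B_n} S_{\lambda} = B_{n+1} \otimes_{B_1 \otimes B_n}(B_1 \otimes S_{\lambda})$, whose class in $\rK(B_{n+1})$ is by definition $[B_1] \cdot [S_{\lambda}]$. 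Hypothesis (C4) computes this as $\sum_{\lambda \subset \mu} c_{\lambda,\mu}[S_{\mu}]$ with all $c_{\lambda,\mu}$ positive, so $\mu \in S$ iff $\lambda \subset \mu$, which is the desired conclusion.

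The main point requiring care is verifying that the usual semisimple/Frobenius reciprocity toolkit carries over cleanly to the super setting: central primitive idempotents in a semisimple superalgebra are automatically even; the simple super factors (of matrix or $Q(n)$ type) act faithfully on their simple modules; and restriction and induction are exact and intertwine under the quotient $\rK(-)$ where $[M]$ and $[M[1]]$ are identified. None of these points is deep, but each should be checked when writing out the proof rather than glossed over.
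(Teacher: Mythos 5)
Your argument is correct, and it proves the lemma by a route that differs in mechanism, if not in spirit, from the paper's. Both proofs ultimately feed the class $[B_1]\cdot[S_{\lambda}]=[B_{n+1}\otimes_{B_1\otimes B_n}(B_1\otimes S_{\lambda})]$ into (C4) and use positivity of the $c_{\lambda,\mu}$; the difference is how one passes between that induced module and the two-sided ideal $\Sigma(J^{\lambda})$. The paper works with the \emph{left} ideal $J$ of $B_{n+1}$ generated by $\iota_{1,n}(1\otimes J^{\lambda})$, identifies $J\cong B_{n+1}\otimes_{B_1\otimes B_n}(B_1\otimes J^{\lambda})$ (surjectivity is formal, injectivity uses flatness of $\iota_{1,n}$, i.e.\ semisimplicity), reads off the constituents of $J$ from (C4), and then converts to the two-sided ideal via Proposition~\ref{prop:Jlambda2}. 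You instead encode $J^{\lambda}$ by its central even idempotent $e_{\lambda}$, observe $\Sigma(J^{\lambda})=B_{n+1}\,\iota_{1,n}(1\otimes e_{\lambda})\,B_{n+1}$, and determine which blocks $J^{\mu}$ occur by asking on which simples $S_{\mu}$ the idempotent acts nontrivially; faithfulness of each block on its simple plus Frobenius reciprocity for $B_n\to B_{n+1}$ then converts this into the occurrence of $S_{\mu}$ in the induced module. In effect your block/idempotent step re-proves Proposition~\ref{prop:Jlambda2} in idempotent language, and your Frobenius reciprocity step replaces the paper's flatness argument (you induce directly from the simple, so you never need to realize the induction as a subideal of $B_{n+1}$). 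The super-theoretic checkpoints you flag (evenness and centrality of the block idempotents, faithfulness of the simple super factors of type $M(r|s)$ or $Q(r)$, exactness of induction/restriction, and the identification $[M]=[M[1]]$ in $\rK$ matching the fact that $J^{\mu}$ is the isotypic piece for $S_{\mu}$ \emph{and} its shift) are exactly the right ones, and all hold; with those spelled out your proof is complete.
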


\begin{proof}
Let $J$ be the left ideal of $B_{n+1}$ generated by $\iota_{1,n}(1 \otimes J^{\lambda})$. Then $\Sigma(J^{\lambda})$ is the 2-sided ideal generated by $J$. The natural map
\begin{displaymath}
B_{n+1} \otimes_{B_1 \otimes B_n} (B_1 \otimes J^{\lambda}) \to J
\end{displaymath}
is an isomorphism: indeed, it is surjective by definition, and it is injective since $\iota_{1,n}$ is flat. We thus have $[J]=[B_1] [J^{\lambda}]$. Since $[J^{\lambda}]=n [S_{\lambda}]$ for some $n>0$, we see that $[J]=n [B_1] [S_{\lambda}]$. By (C4), it follows that $S_{\mu}$ is a constituent of $J$ if and only if $\lambda \subset \mu$ and $\mu \in \Lambda(B_{n+1})$. Thus the 2-sided ideal generated by $J$ is $\bigoplus_{\mu \in \Lambda(B_{n+1})} J^{\mu}$ (Proposition~\ref{prop:Jlambda2}), which completes the proof.
\end{proof}

For $m \ge 0$, we let $\Sigma^m$ be the $m$-fold iterate of $\Sigma$.

\begin{lemma} \label{lem:ideal-structure1-2}
Given $\lambda \in \Lambda_n$, we have $\Sigma^m(J^{\lambda})=\bigoplus_{\lambda \subset \mu,\, \mu \in \Lambda(B_{n+m})} J^{\mu}$.
\end{lemma}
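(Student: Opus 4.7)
The plan is to proceed by induction on $m$. The base case $m=1$ is exactly Lemma~\ref{lem:ideal-structure1-1}, while $m=0$ reduces to the tautology $J^{\lambda} = J^{\lambda}$ (the only $\mu \in \Lambda(B_n)$ satisfying $\lambda \subset \mu$ is $\mu = \lambda$, by reflexivity of $\subset$ together with the fact, implicit in (C4), that the partial order is compatible with the grading on $\Lambda(\cC)$).

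For the inductive step, assume the assertion for some $m \ge 1$. The key computation is
\begin{align*}
\Sigma^{m+1}(J^{\lambda})
&= \Sigma\bigl(\Sigma^m(J^{\lambda})\bigr)
= \Sigma\biggl(\bigoplus_{\lambda \subset \mu,\, \mu \in \Lambda(B_{n+m})} J^{\mu}\biggr) \\
&= \sum_{\lambda \subset \mu,\, \mu \in \Lambda(B_{n+m})} \Sigma(J^{\mu})
= \sum_{\lambda \subset \mu,\, \mu \in \Lambda(B_{n+m})}\ \bigoplus_{\mu \subset \nu,\, \nu \in \Lambda(B_{n+m+1})} J^{\nu}.
\end{align*}
The second equality is the inductive hypothesis; the third uses that the two-sided ideal generated by a finite sum of two-sided ideals is the sum of the two-sided ideals they generate (apply $\iota_{1,n+m}(1 \otimes -)$ summand-by-summand); and the fourth applies Lemma~\ref{lem:ideal-structure1-1} to each $J^{\mu}$. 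By Proposition~\ref{prop:Jlambda}(c), the resulting sum collapses to $\bigoplus_{\nu} J^{\nu}$, where $\nu$ ranges over those elements of $\Lambda(B_{n+m+1})$ admitting some intermediate $\mu \in \Lambda(B_{n+m})$ with $\lambda \subset \mu \subset \nu$.

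It then remains to identify this index set with $\{\nu \in \Lambda(B_{n+m+1}) : \lambda \subset \nu\}$. One containment is immediate from transitivity of $\subset$. The reverse containment, which I expect to be the only genuinely subtle point, is a chain-refinement statement: every $\nu$ above $\lambda$ admits a predecessor in $\Lambda(B_{n+m})$. This is transparent if one reads $\subset$ in (C4) as the transitive closure of the covering relation furnished by the Pieri-type rule; in the intended application to strict partitions with Young-diagram containment, it can be verified directly by adding boxes one at a time while preserving strictness. Once this refinement is in hand, the inductive step is complete.
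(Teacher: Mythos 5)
Your proof is correct and is essentially the paper's own argument: the paper's entire proof is ``apply Lemma~\ref{lem:ideal-structure1-1} iteratively,'' i.e., precisely the induction you spell out (with $\Sigma$ of a direct sum of two-sided ideals being the sum of the $\Sigma$'s). The chain-refinement point you flag is indeed the only thing the iteration silently uses; it is not a formal consequence of (C1)--(C4) as literally stated, but it holds under the intended reading (take $\subset$ to be generated by the Pieri covering relations) and, in the application, for containment of strict partitions via the add-one-box-preserving-strictness argument you indicate.
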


\begin{proof}
This follows from applying the previous lemma iteratively.
\end{proof}

\begin{proof}[Proof of Theorem~\ref{thm:ideal-structure1}]
Let $\lambda \in \Lambda(B_n)$ be given. Define $I'_m=\Sigma^{m-n}(J^{\lambda})$, which we take to be~0 for $m<n$. We claim that $I'$ is a left tensor ideal of $\cC$. Since $I'_m$ is a 2-sided ideal of $B_m$ for all $m$, it follows that $I'$ is closed under arbitrary compositions in $\cC$, that is, $I'$ is an ideal of $\cC$. Now, if $f \in B_m$ then $\id_{[k]} \amalg f=\iota_{k,n}(1 \otimes f)$. It follows that if $f$ belongs to a 2-sided ideal $J$ of $B_m$ then $\id_{[k]} \amalg f$ belongs to $\Sigma^k(J)$. In particular, if $f \in I'_m$ then $\id_{[k]} \amalg f \in I'_{m+k}$. Thus the claim holds.

Now, let $I^{\lambda}$ be the left tensor ideal generated by $J^{\lambda}$. Since $I'$ is a left tensor ideal containing $J^{\lambda}$, we have $I^{\lambda} \subset I'$. Since $I^{\lambda}$ is a left tensor ideal, we have $\Sigma(I^{\lambda}_m) \subset I^{\lambda}_{m+1}$ for all $m$, and so it follows (by a simple inductive argument) that $I' \subset I^{\lambda}$. We thus have $I^{\lambda}=I'$, and so $I^{\lambda}_m=\Sigma^{m-n}(J^{\lambda})$. Thus the theorem follows from Lemma~\ref{lem:ideal-structure1-2}.
\end{proof}

\subsection{Opposite categories}

Suppose that $B$ is a superalgebra. We define the {\bf opposite} superalgebra, denoted $B^{\op}$, as follows. The super vector space underlying $B^{\op}$ is just that underlying $B$. The multiplication $\bullet$ on $B^{\op}$ is defined on homogeneous elements $x$ and $y$ by $x \bullet y = (-1)^{\vert x \vert \vert y \vert} yx$.

Now suppose that $\cC$ is a supercategory. We define the {\bf opposite} supercategory, denoted $\cC^{\op}$, as follows. The objects of $\cC^{\op}$ are the same as the objects as $\cC$. We put $\Hom_{\cC^{\op}}(x,y)=\Hom_{\cC}(y,x)$. Given homogeneous morphisms $f \colon x \to y$ and $g \colon y \to z$, their composition in $\cC^{\op}$ is defined to be $(-1)^{\vert f \vert \vert g \vert} g \circ f$. We note that $\End_{\cC^{\op}}(x)=\End_{\cC}(x)^{\op}$.

\subsection{From tensor ideals to algebra ideals} \label{ss:R}

Let $\cC$ be a small supercategory. A {\bf $\cC$-module} is a superfunctor $\cC \to \SVec$. (A superfunctor is just a functor that is $\SVec^{\circ}$-enriched, i.e., it preserves homogeneity and degree of morphisms.) We let $\Mod_{\cC}$ be the supercategory of $\cC$-modules. The category $\Mod_{\cC}^{\circ}$ is abelian.

Suppose now that $\cC$ has a monoidal structure $\amalg$. Then $\Mod_{\cC}$ admits a natural monoidal structure $\otimes$, namely Day convolution. Thus $\Mod_{\cC}^{\circ}$ is a monoidal category, and so we can speak of algebras, modules over algebras, ideals in algebras, and so on. We note that to give an algebra in $\Mod^{\circ}_{\cC}$ amounts to giving a $\cC$-module $B$ with even maps $B(x) \otimes B(y) \to B(x \amalg y)$ for all objects $x$ and $y$, satisfying various conditions.

The supercategory $\cC^{\op} \boxtimes \cC$ is naturally monoidal. (Here $\boxtimes$ denotes the product of enriched categories, see \cite[\S 2.5]{supermon}.) This category admits a module $R$ defined by $R(x,y)=\Hom_{\cC}(x,y)$. Furthermore, the module $R$ admits an algebra structure: the map
\begin{displaymath}
R(x,y) \otimes R(x',y') \to R(x \amalg x', y \amalg y')
\end{displaymath}
is simply given by $f \otimes g \mapsto f \amalg g$.

\begin{proposition} \label{prop:ideal-corr}
There is a natural bijective correspondence
\begin{displaymath}
\{ \text{left ideals of $R$} \} \longleftrightarrow \{ \text{left tensor ideals of $\cC$} \}.
\end{displaymath}
\end{proposition}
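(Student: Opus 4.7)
The plan is to observe that, once the Day convolution algebra structure on $R$ is unpacked, a left ideal of $R$ and a left tensor ideal of $\cC$ are literally the same data, and the asserted correspondence is the tautological one.

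First I would spell out concretely what a left ideal of $R$ is. A subobject $I \subset R$ in $\Mod_{\cC^{\op} \boxtimes \cC}^{\circ}$ is a rule assigning to each pair of objects $(x,y)$ a homogeneous subspace $I(x,y) \subset R(x,y) = \Hom_\cC(x,y)$ that is stable under pre-composition with morphisms of $\cC$ (coming from the $\cC^{\op}$ factor) and post-composition with morphisms of $\cC$ (coming from the second factor). By the universal property of Day convolution, the algebra multiplication $R \otimes R \to R$ restricts to a map $R \otimes I \to I$ precisely when, for every pair of objects $(x_1,y_1)$ and $(x_2,y_2)$ and every $f \in \Hom_\cC(x_1,y_1)$ and $g \in I(x_2,y_2)$, the element $f \amalg g$ lies in $I(x_1 \amalg x_2, y_1 \amalg y_2)$.

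Next I would compare with the definition of a left tensor ideal of $\cC$. By Definitions~\ref{def:cat-ideal} and~\ref{def:tensor-ideal}, such a datum is a rule $J$ assigning homogeneous subspaces $J(x,y) \subset \Hom_\cC(x,y)$ that is (i) stable under pre- and post-composition by arbitrary morphisms of $\cC$, and (ii) stable under $\id_z \amalg (-)$ for every object $z$. Condition (i) matches exactly the subfunctor condition for $I$ above. For the remaining piece, I would invoke the factorization $f \amalg g = (f \amalg \id_{y_2}) \circ (\id_{x_1} \amalg g)$ from the remark following Definition~\ref{def:tensor-ideal}: given (i), this shows that (ii) is equivalent to the stronger statement that $f \amalg g \in J$ for every $f \in \Hom_\cC$ and $g \in J$, which is exactly the Day convolution left-ideal condition from the previous paragraph.

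Thus the assignment sending a left ideal $I$ of $R$ to the family $\{I(x,y)\}_{x,y}$, viewed as subspaces of $\{\Hom_\cC(x,y)\}_{x,y}$, is a bijection onto the set of left tensor ideals of $\cC$, with tautological inverse; naturality is immediate since neither construction involves any choice. There is essentially no obstacle beyond translating definitions: the only step that requires care is the identification of Day convolution multiplication with the $\id_z \amalg (-)$ stability condition, for which the factorization above does all the work.
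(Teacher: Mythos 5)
Your proposal is correct and follows essentially the same route as the paper: both arguments are a direct translation of definitions, identifying a left ideal of $R$ with a sub-superfunctor of $(x,y) \mapsto \Hom_{\cC}(x,y)$ closed under $f \amalg (-)$, which is exactly a left tensor ideal of $\cC$. The only substantive point, that closure under $\id_z \amalg (-)$ together with the ideal condition gives closure under arbitrary $f \amalg (-)$ via the factorization $f \amalg g = (f \amalg \id) \circ (\id \amalg g)$, is handled by you explicitly and by the paper through the remark following Definition~\ref{def:tensor-ideal}, so there is no real difference in approach.
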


\begin{proof}
Suppose that $I$ is a left ideal of $R$. Then $I$ is in particular a $\cC$-submodule of $R$, i.e., a sub superfunctor of $R$. This exactly means that $I$ is an ideal of the category $\cC$ in the sense of Definition~\ref{def:cat-ideal}. Since $I$ is a left ideal of $R$, we see that if $f \in R(x,y)$ and $g \in I(x',y')$ then $f \amalg g$ belongs to $I(x \amalg x', y \amalg y')$. This shows that $I$ is a left tensor ideal of $\cC$. The above reasoning is reversible, and so the proposition follows.
\end{proof}

\subsection{The Hecke--Clifford algebras}

The Clifford algebra $\Cl_n$ is the superalgebra generated by odd-degree elements $\alpha_1,\dots,\alpha_n$ subject to the relations $\alpha_i^2=1$ and $\alpha_i \alpha_j = -\alpha_j \alpha_i$ for $i \ne j$. The symmetric group $\fS_n$ acts on $\Cl_n$ by $\sigma \cdot \alpha_i = \alpha_{\sigma(i)}$ and we define the Hecke--Clifford algebra $\cH_n$ to be the semi-direct product $\bC[\fS_n] \ltimes \Cl_n$. This is again a superalgebra, where we define $|\sigma|=0$ for $\sigma \in \fS_n$.

The algebra $\cH_n$ is semisimple \cite[(3.25)]{chengwang}. The isomorphism classes of simple $\cH_n$-modules are parametrized by strict partitions of $n$ \cite[Proposition 3.41]{chengwang}. We thus identify $\Lambda_n=\Lambda(\cH_n)$ with the set of such partitions. Given $\lambda \in \Lambda_n$, we have an even isomorphism $S_{\lambda} \cong S_{\lambda}[1]$ if and only if $\delta(\lambda)=1$ \cite[Corollary 3.44]{chengwang}.

We define a linear map $(-)^{\dag} \colon \cH_n \to \cH_n$, called {\bf transpose}, by the formula
\begin{displaymath}
(\alpha_{i_1} \cdots \alpha_{i_k} \sigma)^{\dag}
=\zeta^{k^2} \sigma^{-1} \alpha_{i_k} \cdots \alpha_{i_1}
=(-1)^{\binom{k}{2}} \zeta^k \sigma^{-1} \alpha_{i_k} \cdots \alpha_{i_1},
\end{displaymath}
where here $\sigma \in \fS_n$. One readily verifies that this is well-defined.

\begin{proposition}
The transpose map defines an isomorphism $\cH_n^{\op} \to \cH_n$ of superalgebras.
\end{proposition}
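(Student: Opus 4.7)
The plan is to verify three things about the assignment $T:=(-)^{\dag}$: that it gives a well-defined even linear endomorphism of $\cH_n$; that it satisfies $T(xy)=(-1)^{|x||y|}T(y)T(x)$ on homogeneous elements, which is precisely what it means to be a superalgebra map $\cH_n^{\op}\to\cH_n$; and that it is bijective. It is clear from the formula that $T$ preserves parity (the image of a monomial of Clifford degree $k$ is again of Clifford degree $k$), so evenness is automatic once well-definedness is established.

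For well-definedness, I would present $\cH_n$ by generators $\alpha_1,\dots,\alpha_n$ together with $\sigma\in\fS_n$ modulo the Clifford relations $\alpha_i^2=1$ and $\alpha_i\alpha_j=-\alpha_j\alpha_i$ for $i\neq j$, the usual $\fS_n$-relations, and the semidirect relation $\sigma\alpha_i=\alpha_{\sigma(i)}\sigma$. The formula for $T$ is then checked to be invariant under each relation applied to a monomial $\alpha_{i_1}\cdots\alpha_{i_k}\sigma$ whose Clifford factor need not be in standard ordered form. Swapping two adjacent distinct $\alpha$'s multiplies both sides of the formula by $-1$, and moving $\sigma$ past an $\alpha$ is immediate. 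The delicate check is $\alpha_i^2=1$: deleting such a pair decreases $k$ by $2$, so the scalar $\zeta^{k^2}$ changes by a factor $\zeta^{k^2-(k-2)^2}=\zeta^{4k-4}=1$. This is precisely why the exponent must be quadratic in $k$.

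For the anti-homomorphism property, it suffices to verify $T(xy)=(-1)^{|x||y|}T(y)T(x)$ on generating pairs. The essential case is $(\alpha_i,\alpha_j)$, which simultaneously handles both the anticommutation and the relation $\alpha_i^2=1$: the left side evaluates to $\zeta^4\alpha_j\alpha_i=\alpha_j\alpha_i$, while the right is $-\zeta^2\alpha_j\alpha_i=\alpha_j\alpha_i$. The mixed pair $(\alpha_i,\sigma)$ uses $\sigma\alpha_i\sigma^{-1}=\alpha_{\sigma(i)}$ together with $T(\alpha_i)=\zeta\alpha_i$ and $T(\sigma)=\sigma^{-1}$, and the pair $(\sigma,\tau)$ from $\fS_n$ reduces to $(\sigma\tau)^{-1}=\tau^{-1}\sigma^{-1}$ with trivial sign.

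For bijectivity, the cleanest route is to apply $T$ twice to a basis element: reversing the Clifford string a second time, together with moving $\sigma^{-1}$ back through the $\alpha$'s (which merely relabels their indices), produces an overall scalar $\zeta^{2k^2}=(-1)^k$. Thus $T^2$ is the parity operator, $T^4=\id$, and in particular $T$ is invertible. The main obstacle throughout is bookkeeping of signs and powers of $\zeta$, and the whole proof turns on confirming that the scalar $\zeta^{k^2}$ (equivalently $(-1)^{\binom{k}{2}}\zeta^k$) is the unique one that simultaneously makes the Clifford relation $\alpha_i^2=1$ respected in the well-definedness step and makes the opposite-algebra sign $(-1)^{|x||y|}$ cancel correctly in the anti-homomorphism step.
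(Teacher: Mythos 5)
Your proposal is correct in substance but takes a different route from the paper. The paper's argument is a short computation: it first notes that on basis monomials $x=\alpha_{i_1}\cdots\alpha_{i_k}\sigma$ the formula collapses to $x^{\dag}=\zeta^{|x|^2}x^{-1}$, and since a product of two such monomials is again (up to sign) such a monomial, the identity $(xy)^{\dag}=(-1)^{|x||y|}y^{\dag}x^{\dag}$ reduces to the scalar identity $\zeta^{(n+m)^2}=(-1)^{nm}\zeta^{n^2+m^2}$; well-definedness is dismissed as routine and bijectivity is left implicit. You instead work with the presentation of $\cH_n$, checking compatibility of the formula with each relation and then verifying the twisted anti-multiplicativity on generators. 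Your individual computations are all correct: $\zeta^{k^2-(k-2)^2}=1$ is exactly the point of the quadratic exponent, the $(\alpha_i,\alpha_j)$ case works out as you say, and the observation that $T^2$ is the parity operator (so $T^4=\id$) is a clean way to get bijectivity that the paper does not make explicit.

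One step needs tightening. For a linear map $T$ that is already defined on all of $\cH_n$, verifying $T(xy)=(-1)^{|x||y|}T(y)T(x)$ only for pairs of \emph{generators} does not by itself yield the identity for all homogeneous elements; generator checks suffice only when one is constructing a homomorphism out of an algebra given by generators and relations. Two routine repairs are available. Either define an algebra homomorphism $\cH_n^{\op}\to\cH_n$ on generators by $\alpha_i\mapsto\zeta\alpha_i$ and $\sigma\mapsto\sigma^{-1}$ (your checks show the relations of $\cH_n^{\op}$, e.g.\ $\alpha_i\bullet\alpha_i=-1$, are respected) and then verify that this homomorphism agrees with the stated formula on basis monomials, where reversing $k$ odd factors produces precisely the scalar $(-1)^{\binom{k}{2}}\zeta^k$; or keep your set-up but first prove the identity for pairs (arbitrary monomial, single generator) directly from the formula --- e.g.\ appending $\alpha_j$ uses $\zeta^{(k+1)^2}=(-1)^k\zeta^{k^2+1}$ --- and then induct on the length of the second factor. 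With either patch your argument closes; done for arbitrary pairs of monomials, the second patch is essentially the paper's computation in disguise.
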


\begin{proof}
We must show that $(xy)^{\dag}=(-1)^{\vert x \vert \vert y \vert} y^{\dag} x^{\dag}$ for homogeneous elements $x,y \in \cH_n$. If $x$ is a basis vector (i.e., of the form $\alpha_{i_1} \cdots \alpha_{i_k} \sigma$) then we have $x^{\dag}=\zeta^{\vert x \vert^2} x^{-1}$. Suppose that $x$ and $y$ are basis vectors of degrees $n$ and $m$. Then $xy$ is a basis vector of degree $n+m$. We have
\begin{align*}
(xy)^{\dag} &= \zeta^{(n+m)^2} (xy)^{-1} \\
(-1)^{nm} y^{\dag} x^{\dag} &= \zeta^{2nm+n^2+m^2} y^{-1} x^{-1}.
\end{align*}
These are equal. We have thus verified the formula when $x$ and $y$ are basis vectors, from which the general case follows.
\end{proof}

Suppose that $M$ is an $\cH_n$-module. Then the dual vector space $M^*$ is naturally an $\cH_n^{\op}$-module. Composing with the transpose isomorphism, we can regard $M^*$ as an $\cH_n$-module; we denote this module by $M^{\dag}$.

\subsection{The Hecke--Clifford category}

We now assemble the Hecke--Clifford algebras into a supercategory $\cC$. The objects of $\cC$ are formal symbols $[n]$ with $n \in \bN$. We put $\End_{\cC}([n])=\cH_n$ and $\Hom_{\cC}([n], [m])=0$ for $n \ne m$. Composition is defined in the obvious manner.

We now define a monoidal structure $\amalg$ on $\cC$. On objects, we put $[m] \amalg [n]=[m+n]$. For $m,n \ge 0$, we define
\begin{displaymath}
\iota_{m,n} \colon \cH_m \otimes \cH_n \to \cH_{m+n}
\end{displaymath}
by $\alpha_i \otimes 1 \mapsto \alpha_i$, $1 \otimes \alpha_j \mapsto \alpha_{m+j}$, and using the usual embedding $\fS_m \times \fS_n \subset \fS_{m+n}$. One readily verifies that this is a homomorphism of superalgebras. The monoidal operation $\amalg$ is defined on morphisms using $\iota_{m,n}$.

We now define a symmetry $\beta$ on the monoidal structure $\amalg$. Let $\tau_{m,n} \in \fS_{m+n}$ be the permutation defined by
\begin{displaymath}
\tau_{m,n}(i) = \begin{cases}
i+n & \text{if $1 \le i \le m$} \\
i-m & \text{if $m+1 \le i \le m+n$} \end{cases}.
\end{displaymath}
One readily verifies that for homogeneous elements $x \in \cH_m$ and $y \in \cH_n$ we have
\begin{displaymath}
\tau_{m,n} \iota_{m,n}(x \otimes y) \tau_{m,n}^{-1} = (-1)^{mn} \iota_{n,m}(y \otimes x).
\end{displaymath}
We define $\beta_{m,n} \colon [m] \amalg [n] \to [n] \amalg [m]$ to be the element $\tau_{m,n} \in \End_{\cC}([m+n])$.

Put $\Lambda=\Lambda(\cC)$, which we identify with the set of all strict partitions. The group $\rK(\cC)$ is the free abelian group on the classes $[S_{\lambda}]$ with $\lambda \in \Lambda$. We let $\subset$ be the partial order on $\Lambda$ given by containment of Young diagrams; explicitly, $\lambda \subseteq \mu$ means $\lambda_i \le \mu_i$ for all $i$. We have the following version of the Pieri rule (recall that $\delta(\lambda)$ is 0 or 1 depending on if $\ell(\lambda)$ is even or odd, respectively):

\begin{proposition} \label{prop:hecke-pieri}
Let $\lambda$ be a strict partition of $n$. We then have
\begin{displaymath}
[\cH_1] [S_\lambda] =  \sum_{\substack{\mu \supset \lambda \\ \ell(\mu)  =\ell(\lambda) }} 2 [S_\mu] + \sum_{\substack{\mu \supset \lambda \\ \ell(\mu)  =\ell(\lambda) +1 }} 2^{\delta(\lambda)} [S_\mu]
\end{displaymath}
in $\rK(\cC)$, where $\mu$ varies over strict partitions of $n+1$.
\end{proposition}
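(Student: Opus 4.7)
My plan is to pass through the Sergeev characteristic map and reduce to the classical Pieri rule for Schur $Q$-functions. Recall that there is a ring isomorphism
\[
\ch \colon \rK(\cC) \xrightarrow{\sim} \Gamma,
\]
where $\Gamma$ is the subring of symmetric functions spanned by the Schur $Q$-functions $Q_\lambda$ of strict partitions, and the induction (Day convolution) product on $\rK(\cC)$ corresponds to multiplication in $\Gamma$. The standard normalization (see \cite[\S 3.4, Theorem 3.51]{chengwang}) gives $\ch([S_\lambda]) = 2^{-\lfloor \ell(\lambda)/2 \rfloor} Q_\lambda$. Noting further that $\cH_1$ is $2$-dimensional and coincides with its unique simple supermodule $S_{(1)}$ (as $\alpha_1$ is odd), we have $\ch([\cH_1]) = Q_{(1)}$. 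The problem therefore reduces to expanding $Q_{(1)} \cdot Q_\lambda$ in the $Q_\mu$ basis and then applying $\ch^{-1}$.

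Next, I would invoke the classical Pieri rule for Schur $P$-functions \cite[III]{macdonald}, namely $P_{(1)} \cdot P_\lambda = \sum_\mu P_\mu$, summed over strict partitions $\mu \supset \lambda$ with $|\mu| = |\lambda|+1$. Via the identity $Q_\nu = 2^{\ell(\nu)} P_\nu$, this translates immediately to
\[
Q_{(1)} \cdot Q_\lambda \;=\; \sum_{\substack{\mu \supset \lambda \\ \ell(\mu) = \ell(\lambda)}} 2\, Q_\mu \;+\; \sum_{\substack{\mu \supset \lambda \\ \ell(\mu) = \ell(\lambda)+1}} Q_\mu,
\]
the two cases corresponding to a box adjoined to an existing row versus a box initiating a new row.

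Finally, I would translate back through $\ch^{-1}$. The coefficient of $[S_\mu]$ in $[\cH_1] \cdot [S_\lambda]$ equals the coefficient of $Q_\mu$ in $Q_{(1)} \cdot Q_\lambda$ multiplied by $2^{\lfloor \ell(\mu)/2 \rfloor - \lfloor \ell(\lambda)/2 \rfloor}$. When $\ell(\mu) = \ell(\lambda)$ this correction factor is $1$, so the coefficient remains $2$. When $\ell(\mu) = \ell(\lambda)+1$, the elementary identity $\lfloor (\ell+1)/2 \rfloor - \lfloor \ell/2 \rfloor = \delta(\lambda)$ (valid for $\ell = \ell(\lambda)$, by case analysis on parity) converts the coefficient $1$ into $2^{\delta(\lambda)}$, matching the stated formula. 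The main delicate point I anticipate is nailing down the exponent $2^{-\lfloor \ell/2\rfloor}$ in the normalization of $\ch$: this is standard but easy to misstate, and the whole computation is sensitive to getting it exactly right, so I would carefully cross-reference with \cite[Theorem 3.51]{chengwang}.
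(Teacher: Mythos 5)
Your proposal is correct and follows essentially the same route as the paper: apply the Sergeev characteristic map with the normalization $\ch([S_\lambda])=2^{(\delta(\lambda)-\ell(\lambda))/2}Q_\lambda=2^{-\lfloor \ell(\lambda)/2\rfloor}Q_\lambda$ and $\ch([\cH_1])=Q_{(1)}$, and reduce to the Pieri identity $Q_{(1)}Q_\lambda=\sum_{\ell(\mu)=\ell(\lambda)}2Q_\mu+\sum_{\ell(\mu)=\ell(\lambda)+1}Q_\mu$ from Macdonald (which you obtain via the $P$-function Pieri rule and $Q_\nu=2^{\ell(\nu)}P_\nu$, while the paper cites it directly), with your bookkeeping of the powers of $2$ coming out right. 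One small precision: $\ch$ is an isomorphism only after tensoring with $\bQ$ (the classes $2^{-\lfloor\ell/2\rfloor}Q_\lambda$ do not lie in the $\bZ$-span of the $Q_\mu$), but since you only need an injective ring map into $\Gamma\otimes\bQ$, this does not affect the argument.
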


\begin{proof}
  Let $Q_\lambda$ be the Schur $Q$-functions indexed by strict partitions $\lambda$, see \cite[\S A.3]{chengwang}. They are linearly independent and their $\bZ$-span is a subring $\Gamma$. There is a characteristic map $\ch \colon \rK(\cC) \otimes {\bQ} \to \Gamma \otimes {\bQ}$, which is an isomorphism of graded algebras (see \cite[\S 3.3]{chengwang}). We have $\ch([S_\lambda]) = 2^{(\delta(\lambda)-\ell(\lambda))/2} Q_{\lambda}$ and hence $\ch([\cH_1]) = Q_1$ \cite[\S 3.3.6]{chengwang}. Since $\ch$ is an isomorphism, the result follows from the following identity \cite[III.8.7 and III.8.15]{macdonald}
\[
  Q_1 Q_{\lambda} =  \sum_{\substack{\mu \supset \lambda \\ \ell(\mu)  =\ell(\lambda) }} 2 Q_{\mu} + \sum_{\substack{\mu \supset \lambda \\ \ell(\mu)  =\ell(\lambda) +1 }} Q_{\mu}. \qedhere
\]
\end{proof}

\begin{proposition}
The category $\cC$ satisfies (C1)--(C4).
\end{proposition}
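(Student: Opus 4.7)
The plan is to verify the four conditions (C1)--(C4) essentially by reading off what has already been built up in this section, together with one appeal to Proposition~\ref{prop:hecke-pieri}. Conditions (C1) and (C2) are immediate from the construction: the objects of $\cC$ are declared to be the formal symbols $[n]$ indexed by $\bN$, the monoidal operation is defined by $[m] \amalg [n] = [m+n]$, and the hom-spaces between objects of different ``size'' are zero by definition. Thus no real work is needed for these.

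For (C3), note that $\End_\cC([n]) = \cH_n$ is a quotient of $\bC[\fS_n] \ltimes \Cl_n$, which is the semi-direct product of two finite-dimensional superalgebras, hence finite dimensional. Semisimplicity is \cite[(3.25)]{chengwang}, which was already recorded when the Hecke--Clifford algebras were introduced. So (C3) requires nothing beyond citing what has been stated.

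The only substantive step is (C4). I would take $\subseteq$ to be containment of Young diagrams on $\Lambda = \coprod_n \Lambda(\cH_n)$, which has already been singled out as the relevant partial order in the discussion preceding Proposition~\ref{prop:hecke-pieri}. Proposition~\ref{prop:hecke-pieri} then gives exactly the required expansion
\[
[\cH_1] \cdot [S_\lambda] = \sum_{\substack{\mu \supset \lambda \\ \ell(\mu) = \ell(\lambda)}} 2\,[S_\mu] + \sum_{\substack{\mu \supset \lambda \\ \ell(\mu) = \ell(\lambda)+1}} 2^{\delta(\lambda)}\,[S_\mu],
\]
and both coefficients $2$ and $2^{\delta(\lambda)}$ are positive integers. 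Since every $\mu$ occurring on the right satisfies $\lambda \subset \mu$ and lies in $\Lambda(\cH_{n+1})$, condition (C4) holds with $c_{\lambda,\mu} \in \{2^{\delta(\lambda)}, 2\}$.

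There is really no obstacle here: the content of the statement has been assembled piece by piece earlier in the section, and the proof is simply a matter of matching each bullet in the definition of (C1)--(C4) to the construction of $\cC$ or to Proposition~\ref{prop:hecke-pieri}. Thus the proof is just a short verification and does not require any new ideas.
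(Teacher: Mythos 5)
Your proof is correct and follows the paper's own argument exactly: (C1)--(C2) by construction, (C3) from the stated finite dimensionality and semisimplicity of $\cH_n$, and (C4) from Proposition~\ref{prop:hecke-pieri} with the containment order on strict partitions. (Minor quibble: $\cH_n$ \emph{is} the semidirect product $\bC[\fS_n]\ltimes \Cl_n$, not merely a quotient of it, but this does not affect anything.)
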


\begin{proof}
Conditions (C1) and (C2) are essentially by definition, and (C3) was discussed in the previous section. Condition (C4) follows from Proposition~\ref{prop:hecke-pieri}.
\end{proof}

We define the transpose functor $(-)^{\dag} \colon \cC^{\op} \to \cC$ to be the identity on objects and the transpose operation $(-)^{\dag}$ on each algebra $\cH_n$.

\begin{proposition}
The transpose functor is naturally an equivalence of symmetric monoidal categories.
\end{proposition}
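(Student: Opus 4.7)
The proof divides into four steps: that $(-)^{\dag}$ is a functor $\cC^{\op} \to \cC$; that it is an equivalence of supercategories; that it respects the monoidal product $\amalg$; and that it intertwines the braidings. The first two are formal given the previous proposition, and the braiding compatibility reduces to a single identity. The computational core is the monoidal compatibility.

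The previous proposition provides a superalgebra isomorphism $\cH_n^{\op} \to \cH_n$ for every $n$. Since $\Hom_{\cC}([m],[n])=0$ whenever $m \ne n$, this data is precisely a supercategory functor $\cC^{\op} \to \cC$ that is the identity on objects. It is essentially surjective (being the identity on objects) and fully faithful (being an isomorphism on every $\Hom$-superspace), hence an equivalence.

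For monoidality, the plan is to verify $(x \amalg y)^{\dag} = x^{\dag} \amalg y^{\dag}$ on basis elements $x = \alpha_{i_1}\cdots\alpha_{i_p}\sigma \in \cH_m$ and $y = \alpha_{j_1}\cdots\alpha_{j_q}\pi \in \cH_n$. Direct computation with the definition of $\iota_{m,n}$ yields
\[
x \amalg y = \alpha_{i_1}\cdots\alpha_{i_p}\alpha_{m+j_1}\cdots\alpha_{m+j_q}\,\sigma\,\widetilde{\pi},
\]
where $\widetilde{\pi}$ is the image of $\pi$ under $\fS_n \hookrightarrow \fS_{m+n}$, and where we use that $\sigma$ fixes the block $\{m+1,\ldots,m+n\}$. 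One then applies $(-)^{\dag}$ to this using its super-antimultiplicativity (previous proposition), and separately expands $\iota_{m,n}(x^{\dag} \otimes y^{\dag})$; the two expressions are compared by rearranging using two facts inside $\cH_{m+n}$, namely that $\fS_m$ commutes with $\alpha_{m+1},\ldots,\alpha_{m+n}$, and that distinct Clifford generators anticommute. Both sides should reduce to a common normal form with an overall sign $(-1)^{pq}$. The main obstacle will be bookkeeping for three distinct sources of sign: super-antimultiplicativity of $(-)^{\dag}$; the supertensor product on $\cH_m \otimes \cH_n$ when factoring basis elements into Clifford and symmetric-group pieces; and Clifford anticommutation in $\cH_{m+n}$. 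All three should organize in terms of the parities $p$ and $q$.

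For the braiding, the symmetric monoidal structure on $\cC^{\op}$ inherited from $\cC$ has braiding morphism at $([m],[n])$ equal to $\beta^{\cC}_{[n],[m]} = \tau_{n,m} \in \cH_{m+n}$ (viewed as a morphism in the opposite direction). Compatibility of $(-)^{\dag}$ with the braiding thus amounts to the identity $(\tau_{n,m})^{\dag} = \tau_{m,n}$. Setting $k = 0$ in the definition of transpose gives $(\sigma)^{\dag} = \sigma^{-1}$ for any $\sigma \in \fS_n$, so $(\tau_{n,m})^{\dag} = \tau_{n,m}^{-1} = \tau_{m,n}$, as required.
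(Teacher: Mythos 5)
Your proposal is correct, and its overall shape matches the paper's proof: take the structure isomorphism $([n] \amalg [m])^{\dag} \to [n]^{\dag} \amalg [m]^{\dag}$ to be the identity, reduce monoidality to the identity $\iota_{m,n}(f \otimes g)^{\dag} = \iota_{m,n}(f^{\dag} \otimes g^{\dag})$, and reduce the symmetry condition to $\tau_{n,m}^{\dag} = \tau_{m,n}$, which you dispatch exactly as the paper does. The one genuine difference is how the monoidal identity is verified. You propose a direct computation on all basis elements, with sign bookkeeping in $p$ and $q$; this does close: for $x = c_1\sigma$, $y = c_2\pi$ with Clifford parts of lengths $p,q$, the left side picks up $\zeta^{(p+q)^2} = \zeta^{p^2+q^2}(-1)^{pq}$ from the transpose formula applied to the concatenated Clifford word, while the right side picks up $\zeta^{p^2}\zeta^{q^2}$ together with $(-1)^{pq}$ from anticommuting the reversed Clifford word on $\{1,\dots,m\}$ past the one on $\{m+1,\dots,m+n\}$, so the two normal forms agree (and, as you note, no sign enters from factoring $c_1\sigma \otimes c_2\pi$ in the supertensor product, since the symmetric-group factors are even). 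The paper avoids all of this bookkeeping with a structural shortcut: both $f \otimes g \mapsto \iota_{m,n}(f \otimes g)^{\dag}$ and $f \otimes g \mapsto \iota_{m,n}(f^{\dag} \otimes g^{\dag})$ are superalgebra homomorphisms $\cH_m \otimes \cH_n \to \cH_{m+n}^{\op}$ (using the previous proposition and the canonical identification $(\cH_m \otimes \cH_n)^{\op} \cong \cH_m^{\op} \otimes \cH_n^{\op}$), so it suffices to compare them on the algebra generators $\alpha_i \otimes 1$, $1 \otimes \alpha_j$, and transpositions, where equality is immediate. Your route is more elementary but requires you to actually carry out the sign computation you currently only sketch; the paper's route buys a one-line verification at the cost of invoking the homomorphism property. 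Either is acceptable, but if you keep your version you should write out the normal-form comparison rather than asserting that the signs "should organize."
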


\begin{proof}
It is clear that the transpose functor is indeed a functor and an equivalence. To define a monoidal structure on the transpose functor, we take the isomorphism $([n] \amalg [m])^{\dag} \to [n]^{\dag} \amalg [m]^{\dag}$ to be the identity map; note that both objects are $[n+m]$. For this to define a monoidal structure, we need
\begin{displaymath}
\iota_{m,n}(f \otimes g)^{\dag}=\iota_{m,n}(f^{\dag} \otimes g^{\dag})
\end{displaymath}
for $f \in \cH_m$ and $g \in \cH_n$. Both sides define algebra homomorphisms $\cH_m \otimes \cH_n \to \cH_{n+m}^{\op}$, so it suffices to check the equality on algebra generators, where it is clear. Finally, for the symmetry condition, we need $(\beta^{\op}_{m,n})^{\dag}=\beta_{m,n}$, where $\beta^{\op}$ denotes the symmetry in $\cC^{\op}$. By definition, the morphism $\beta^{\op}_{m,n}$ in $\cC^{\op}$ is equal to the morphism $\beta_{n,m}$ in $\cC$. Thus the condition we require is $\tau_{n,m}^{\dag}=\tau_{m,n}$, which is true.
\end{proof}

Let $M$ be a $\cC$-module. Then $x \mapsto M(x)^*$ is a $\cC^{\op}$-module. Composing with the transpose equivalence, we get a $\cC$-module that we denote by $M^{\dag}$. If we regard $M$ as a sequence $(M_n)_{n \ge 0}$, where $M_n$ is an $\cH_n$-module, then $M^{\dag}$ is just the sequence $(M^{\dag}_n)_{n \ge 0}$, where $M_n^{\dag}$ is as in the previous section. The construction $(-)^{\dag}$ defines an equivalence $\Mod_{\cC}^{\op} \to \Mod_{\cC}$ that is compatible with tensor products. It thus induces a ring automorphism of $\rK(\cC)$ that we denote by $(-)^{\dag}$.

\begin{proposition}
We have $(-)^{\dag}=\id$ on $\rK(\cC)$.
\end{proposition}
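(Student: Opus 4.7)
The plan is to show that the graded ring automorphism $\phi$ of $\rK(\cC)$ induced by $(-)^{\dag}$ is the identity. Since $(-)^{\dag}$ is an exact tensor equivalence and sends simples to simples, $\phi([S_{\lambda}]) = [S_{f(\lambda)}]$ for some bijection $f$ on strict partitions preserving $|\lambda|$.

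First, I would verify that $\phi$ is an isometry for the Hom pairing $\langle [M],[N] \rangle := \dim \Hom_{\cH_n}(M,N)$ on $\rK(\cC)$, using the natural isomorphism $\Hom_{\cH_n}(M^{\dag}, N^{\dag}) \cong \Hom_{\cH_n}(N, M)$ (dualization of maps, twisted by $\dag$) together with symmetry of the pairing on the semisimple category $\Mod_{\cH_n}$. Under the characteristic isomorphism $\ch$, and using $\ch([S_{\lambda}]) = 2^{(\delta(\lambda) - \ell(\lambda))/2} Q_{\lambda}$ together with $\dim \End_{\cH_n}(S_{\lambda}) = 2^{\delta(\lambda)}$, this pairing corresponds to the Hall-type pairing on $\Gamma \otimes \bQ$ for which $\{Q_{\lambda}\}$ is orthogonal with $\langle Q_{\lambda}, Q_{\lambda} \rangle = 2^{\ell(\lambda)}$. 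Since distinct lengths give distinct norms, the bijection $f$ preserves $\ell(\lambda)$ as well.

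Next, I would induct on $|\lambda|$ to show $f(\lambda) = \lambda$. The cases $|\lambda| \le 1$ are immediate since there is a unique strict partition of each length. For $|\lambda| \ge 2$, apply $\phi$ to the Pieri identity of Proposition~\ref{prop:hecke-pieri} for each strict $\mu \subset \lambda$ of size $|\lambda|-1$ (i.e., $\mu$ obtained from $\lambda$ by removing a removable corner); the inductive hypothesis $\phi([S_{\mu}]) = [S_{\mu}]$ and comparison of coefficients shows that $f$ permutes the addable set $A_{\mu} := \{\nu : \nu \supset \mu, \, \nu \text{ strict}, \, |\nu| = |\lambda|\}$, so in particular $f(\lambda) \supset \mu$. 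When $\lambda$ has two distinct removable corners $b_1, b_2$, taking $\mu_i = \lambda \setminus \{b_i\}$ forces $f(\lambda) \supset \mu_1 \cup \mu_2 = \lambda$, and hence $f(\lambda) = \lambda$. When $\lambda$ has a unique removable corner (forcing $\lambda$ to be a shifted staircase $(m+k, m+k-1, \ldots, m+1)$), I would instead apply $\phi$ to the up-Pieri identity $[S_{(1)}] \cdot [S_{\lambda}]$ and use the length profile (the number of length-preserving versus length-increasing summands) of its expansion to distinguish $\lambda$ from competing strict partitions of the same size and length.

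\textbf{Main obstacle.} The most delicate step is the combinatorial verification in the unique-removable-corner case, where one must check that the length profile of addable corners, combined with the invariants already established, suffices to pin down $\lambda$. A cleaner alternative that bypasses this combinatorics is to prove $\ch([M^{\dag}]) = \ch([M])$ by direct trace computation: the identity $\chi_{M^{\dag}}(x) = \chi_M(x^{\dag})$ reduces the problem to checking that the distinguished class-sum elements of $\cH_n$ used in the definition of $\ch$ are conjugate (up to sign) to their $\dag$-images, which would immediately yield $\phi = \mathrm{id}$ on $\rK(\cC) \otimes \bQ$ and hence on $\rK(\cC)$.
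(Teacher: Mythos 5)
Your reduction to a bijection $f$ on strict partitions is fine, but two steps do not hold up. First, the isometry argument does not give that $f$ preserves $\ell(\lambda)$. The Hom pairing satisfies $\langle [S_\lambda],[S_\lambda]\rangle=\dim\End_{\cH_n}(S_\lambda)=2^{\delta(\lambda)}$, and under $\ch$ the normalizing factor $2^{(\delta(\lambda)-\ell(\lambda))/2}$ exactly cancels $\langle Q_\lambda,Q_\lambda\rangle=2^{\ell(\lambda)}$; so the isometry only yields $\delta(f(\lambda))=\delta(\lambda)$, i.e.\ the \emph{parity} of the length (which you also get for free from the type-$\mathrm{M}$/type-$\mathrm{Q}$ dichotomy $S_\lambda\cong S_\lambda[1]$ iff $\delta(\lambda)=1$). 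To conclude $\ell(f(\lambda))=\ell(\lambda)$ you would need $\phi$ to send $Q_\lambda$ to $Q_{f(\lambda)}$ on the nose rather than to a scalar multiple, which is essentially the statement being proved. Second, the shifted-staircase case, which you yourself flag as the main obstacle, is not actually carried out: the ``length profile'' of $[S_{(1)}]\cdot[S_\lambda]$ is a weak invariant (it records only the number of addable boxes and the coefficient $2^{\delta}$), and as written your use of ``same size and length'' leans on the invalid length-preservation step. So the induction is not closed. (It can be closed with what you do have: your down-Pieri step gives $f(\lambda)\supseteq\mu$ for the unique $\mu\subset\lambda$ of cosize one, and a short check shows that, apart from the pair $\{(3),(2,1)\}$ which parity separates, the other one-box extensions of such a $\mu$ always have at least two removable corners, hence are already fixed by induction, so injectivity of $f$ forces $f(\lambda)=\lambda$; but this argument is not in your write-up.) Your proposed character-theoretic alternative is likewise only asserted: ``conjugate up to sign'' is exactly the point at issue, since a sign would change the character values.

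For comparison, the paper avoids the simple-by-simple combinatorics entirely: since $(-)^{\dag}$ is a ring automorphism of $\rK(\cC)$ and the classes $[\Cl_n]$ of the basic spin modules (which correspond to $Q_{(n)}$ under $\ch$) generate $\rK(\cC)\otimes\bQ$ as a $\bQ$-algebra, it suffices to show $\Cl_n^{\dag}\cong\Cl_n$, and this is done by the explicit pairing $\lambda_x(y)=t(xy^{\dag})$ with $t$ the normalized trace on $\Cl_n$. If you want to keep your route, repair the staircase case as indicated above; otherwise the generator trick is the cleaner fix.
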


\begin{proof}
Let $V_n=\Cl_n$, which is naturally a simple left $\cH_n$-module \cite[\S 3.3.5]{chengwang}. Under the characteristic map (see the proof of Proposition~\ref{prop:hecke-pieri}), the class $[V_n]$ in $\rK(\cC)$ corresponds to the function $Q_{(n)}$ \cite[Lemma~3.40]{chengwang}. Since the $Q_{(n)}$ generate $\Gamma \otimes \bQ$ as a $\bQ$-algebra, it follows that the $[V_n]$ generate $\rK(\cC) \otimes \bQ$ as a $\bQ$-algebra. It thus suffices to show that $[V_n]^{\dag}=[V_n]$. For this, it is enough to show that $V_n^{\dag}$ and $V_n$ are isomorphic as $\cH_n$-modules.

Let $t \colon \Cl_n \to \bC$ be the map satisfying $t(1)=1$ and $t(m)=0$ for any monomial $m \ne 1$ in the $\alpha_i$'s. This map satisfies $t(xy)=t(yx)$ for $x,y \in \Cl_n$ (there is no sign). For $x \in V_n$, let $\lambda_x \in V_n^*$ be defined by $\lambda_x(y)=t(x y^{\dag})$. We thus have a linear map $\lambda \colon V_n \to V_n^*$ via $x \mapsto \lambda_x$, which is easily seen to be an isomorphism of super vector spaces. Recall that $V_n^*$ is a left $\cH_n$-module via $(a \lambda)(y)=\lambda(a^{\dag} y)$. For $a \in \Cl_n$, we have
\begin{displaymath}
\lambda_{ax}(y)=t(ax y^{\dag})=t(x y^{\dag} a)=t(x (a^{\dag} y)^{\dag})=\lambda_x(a^{\dag} y)=(a \lambda_x)(y),
\end{displaymath}
and so $\lambda_{ax}=a \lambda_x$. We thus see that $\lambda$ is a homomorphism of left $\Cl_n$-modules. Since $t$ is $\fS_n$-invariant, it follows that $\lambda$ is also $\fS_n$-equivariant. We thus see that $\lambda$ is an isomorphism of left $\cH_n$-modules, which completes the proof.
\end{proof}

\begin{corollary} \label{cor:Slambda-dag}
We have $S_{\lambda}^{\dag} \cong S_{\lambda}$ as $\cH_n$-modules for all $\lambda \in \Lambda_n$.
\end{corollary}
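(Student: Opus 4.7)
The plan is to deduce the corollary essentially for free from the preceding proposition, using the fact that $\rK(\cC)$ was designed to remember isomorphism classes of simple $\cH_n$-modules in the super sense (that is, allowing both even and odd isomorphisms). The preceding proposition gives the identity $[S_\lambda^\dag] = [S_\lambda]$ in $\rK(\cC)$, and it remains only to observe that this Grothendieck-level identity can be promoted to an honest isomorphism of $\cH_n$-modules because simples are rigidly detected by their classes.

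First I would verify that $S_\lambda^\dag$ is again a simple $\cH_n$-module. This follows because $(-)^\dag$ is the composition of linear duality $M \mapsto M^*$ (a contravariant equivalence $\Mod_{\cH_n} \to \Mod_{\cH_n^{\op}}$) with the superalgebra isomorphism $\cH_n^{\op} \to \cH_n$ afforded by transpose; contravariant equivalences preserve simplicity. Since $\Lambda_n$ parametrizes the isomorphism classes of simple $\cH_n$-modules (up to odd iso, as in the setup of \S on semisimple superalgebras), there is a unique $\mu \in \Lambda_n$ such that $S_\lambda^\dag$ is isomorphic to $S_\mu$ (or $S_\mu[1]$) in the supercategory. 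Passing to the Grothendieck group, which by construction is the free abelian group on the classes $[S_\nu]$ with $\nu \in \Lambda$, this means $[S_\lambda^\dag] = [S_\mu]$.

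Now I invoke the preceding proposition: $(-)^\dag = \id$ on $\rK(\cC)$, so $[S_\lambda^\dag] = [S_\lambda]$, forcing $\mu = \lambda$. By definition of $\Lambda_n$, this gives $S_\lambda^\dag \cong S_\lambda$ as $\cH_n$-modules, which is the corollary.

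The main obstacle was already overcome in the preceding proposition, where the explicit trace pairing $\lambda_x(y) = t(xy^\dag)$ was used to build an isomorphism $V_n \to V_n^\dag$ for the generating family $V_n = \Cl_n$, and then the multiplicative structure on $\rK(\cC)$ was leveraged to propagate this to all classes. At the level of this corollary itself, there is essentially no work beyond unpacking the definition of $\rK(\cC)$ and using that duality preserves simples.
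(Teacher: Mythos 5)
Your proposal is correct and is essentially the paper's intended argument: the paper states the corollary without proof precisely because, since $(-)^{\dag}$ preserves simples and the classes $[S_{\nu}]$ form a basis of $\rK(\cC)$ (with $\Lambda_n$ defined up to odd isomorphism), the identity $(-)^{\dag}=\id$ on $\rK(\cC)$ immediately forces $S_{\lambda}^{\dag}\cong S_{\lambda}$.
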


Let $R$ be the canonical $\cC^{\op} \boxtimes \cC$ algebra defined in \S \ref{ss:R}. Let $R'$ be the $\cC \boxtimes \cC$ algebra obtained by applying transpose to the first argument in $R$. Let $U$ be the quotient of $\Cl_1 \otimes \Cl_1$ by the 2-sided ideal generated by $\alpha_1 \otimes \alpha_1 - \zeta$. One easily sees that $R'([1],[1]) \cong U$.

\begin{proposition} \label{prop:R=SymU}
The identification $U=R([1],[1])$ induces an isomorphism $\Sym(U) \to R'$ of $\cC \boxtimes \cC$ algebras.
\end{proposition}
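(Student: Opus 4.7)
The plan is to construct an algebra morphism $\Phi\colon \Sym(U) \to R'$ via the universal property of $\Sym$, then verify it is an isomorphism bidegree by bidegree. To invoke the universal property, I would first check that $R'$ is supercommutative as a $\cC \boxtimes \cC$-algebra. The identity $\tau_{m,n} \iota_{m,n}(x \otimes y) \tau_{m,n}^{-1} = \pm \iota_{n,m}(y \otimes x)$ established earlier (with the sign governed by the degrees of $x$ and $y$) says that in $R = \Hom_\cC(-,-)$ the two natural products $f \amalg g$ and $g \amalg f$ are related by conjugation by the braiding $\beta$, which is precisely the supercommutativity condition in the symmetric monoidal category of $\cC^{\op} \boxtimes \cC$-modules; this property persists in $R'$ because transpose is a symmetric monoidal equivalence $\cC^{\op} \to \cC$. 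The identification $U([1],[1]) \cong R'([1],[1])$, together with the fact that $U$ is concentrated at bidegree $([1],[1])$, gives a morphism $U \to R'$ of $\cC \boxtimes \cC$-modules, and the universal property then produces $\Phi$.

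Next I would reduce to bidegrees on the diagonal. Both sides vanish off the diagonal: $R'([m],[k]) = \Hom_\cC([m],[k]) = 0$ when $m \neq k$, and $\Sym(U)$ is concentrated on the diagonal because $U$ is concentrated at $([1],[1])$ and Day convolution adds bidegrees. At $([n],[n])$, Day convolution gives
\[
U^{\otimes n}([n],[n]) \;\cong\; (\cH_n \otimes \cH_n) \otimes_{(\cH_1 \otimes \cH_1)^{\otimes n}} U([1],[1])^{\otimes n},
\]
and $\Sym^n(U)([n],[n])$ is the super $\fS_n$-coinvariants of this module.

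For surjectivity at $([n],[n])$, choose $u_i = 1 \in U([1],[1]) \cong \cH_1$ for all $i$: the map sends elements of the form $(a \otimes b) \otimes (1^{\otimes n})$ to $b \cdot a^{\dag} \in \cH_n$ (using the module structure of $\cH_n = R'([n],[n])$ over $\cH_n \otimes \cH_n$ via transpose on the first factor), and these cover all of $\cH_n$ as $a,b$ vary. For injectivity I would compare dimensions: $\cH_n \otimes \cH_n$ is free of rank $(n!)^2$ over $(\cH_1 \otimes \cH_1)^{\otimes n}$ via $\iota_{1,\ldots,1} \otimes \iota_{1,\ldots,1}$, so $\dim U^{\otimes n}([n],[n]) = (n!)^2 \cdot 2^n$; the $\fS_n$-action on the $\cH_n \otimes \cH_n$-factor is by multiplication by the diagonal elements $\sigma \otimes \sigma$ for $\sigma \in \fS_n$, which is free on the $\fS_n \times \fS_n$-basis, so the super-coinvariants reduce the dimension by exactly $n!$, yielding $\dim \Sym^n(U)([n],[n]) = n! \cdot 2^n = \dim \cH_n$. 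A surjection between spaces of equal finite dimension is an isomorphism, so $\Phi$ is an isomorphism at every bidegree.

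The main obstacle is the careful sign bookkeeping required to verify supercommutativity of $R'$ through the interplay of Day convolution, the transpose functor on $\cC^{\op} \boxtimes \cC$, and the supertensor products, and to pin down the $\fS_n$-action on $U^{\otimes n}([n],[n])$ precisely enough for the coinvariant dimension count. Once these sign conventions are nailed down, the surjectivity argument and the dimension comparison both proceed in a routine way.
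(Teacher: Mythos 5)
Your proposal is correct in outline, but it takes a genuinely different route from the paper's proof, and the two are worth comparing. The paper never invokes the universal property of $\Sym$ or the commutativity of $R'$: it identifies $R'([n],[n])$ with $\cH_n$ carrying the twisted $\cH_n \otimes \cH_n$-action $(a \otimes b).x = (-1)^{|a|(|x|+|b|)} b x a^{\dag}$, identifies $\Sym^n(U)$ with the $\fS_n$-coinvariants of $(\cH_n \otimes \cH_n) \otimes_{\Cl_n \otimes \Cl_n} \Cl_n$ exactly as you do, and then writes down an explicit map on the resulting basis, $1 \otimes \tau \otimes \alpha_1^{\epsilon_1} \cdots \alpha_n^{\epsilon_n} \mapsto \tau \alpha_1^{\epsilon_1} \cdots \alpha_n^{\epsilon_n}$, checking $\cH_n \otimes \cH_n$-linearity and multiplicativity by direct (sign-heavy) computation; bijectivity is then free because the map visibly carries a basis to a basis. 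Your route trades that explicit verification for two other pieces of work: (i) supercommutativity of $R'$ as a Day-convolution algebra, so that the module map $U \to R'$ extends to an algebra map $\Sym(U) \to R'$, and (ii) the surjectivity-plus-dimension-count argument (your counts $(n!)^2 2^n$ before coinvariants and $n!\, 2^n$ after are right, and the freeness of the $\fS_n$-action on basis lines does ensure no cancellation in the super-coinvariants). Point (ii) is routine, but point (i) is the one substantive step you have only sketched, and it is precisely the step the paper's hands-on construction avoids; note that the inputs you cite do suffice — the naturality/conjugation identity for $\beta$ (whose sign is governed by the parities $|x|,|y|$ of the elements, as you say) together with the fact, proved in the paper, that transpose is a symmetric monoidal equivalence $\cC^{\op} \to \cC$ — so your argument closes once that sign verification is actually carried out. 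In short: the paper pays with an explicit computation and gets injectivity for free; you pay with an abstract commutativity check and a dimension count, and avoid verifying linearity and multiplicativity by hand.
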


\begin{proof}
We first construct an isomorphism $\phi_n \colon \Sym^n(U) \to R'([n], [n])$ of $\cH_n \otimes \cH_n$-modules for each $n$.  For simplicity of notation, we denote $\cH_n \otimes \cH_n$ by $C_n$. We note that $R'([n],[n])$ can be identified with the $C_n$-module $\cH_n$ given on generators by
\begin{displaymath}
(a \otimes b).x = (-1)^{|a|(|x|+|b|)}bxa^{\dag}.
\end{displaymath}
In particular, we identify $U$ with the $C_1$-module $\cH_1$. Thus, the $C_n$-module $\Sym^n(U)$   is given by the $\fS_n$-coinvariants of $C_{n} \otimes_{C_{1}^{\otimes n}} \cH_{1}^{\otimes n}$ where  $C_{1}^{\otimes n} \to C_{n}$ is the natural map that identifies  $C_{1}^{\otimes n}$ with the subalgebra $\Cl_n \otimes \Cl_n$ of $C_{n}$. Under the latter identification,  $\cH_{1}^{\otimes n}$ is isomorphic to the submodule $\Cl_n$ of the $C_n$-module $\cH_n$. Thus we have $C_{n} \otimes_{C_{1}^{\otimes n}} \cH_{1}^{\otimes n} \cong C_{n} \otimes_{\Cl_n \otimes \Cl_n} \Cl_{n}$ as $C_n$-modules. As a complex vector space, $C_{n} \otimes_{\Cl_n \otimes \Cl_n} \Cl_{n}$ has a basis of elements of the form $\sigma \otimes \tau \otimes \alpha_1^{\epsilon_1} \cdots \alpha_n^{\epsilon_n}$ where $\sigma, \tau \in \fS_n$, $\epsilon_i \in \{0,1\}$. The action of $\fS_n$ on the $n$th tensor power $C_{n} \otimes_{\Cl_n \otimes \Cl_n} \Cl_{n}$ is given by 
	 \[
	 g.(\sigma \otimes \tau \otimes \alpha_1^{\epsilon_1} \cdots \alpha_n^{\epsilon_n}) =  \sigma g^{-1} \otimes \tau g^{-1} \otimes (\alpha_1^{\epsilon_1} \cdots \alpha_n^{\epsilon_n})^g,
	 \]
	 where $g \in \fS_n$. Thus its $\fS_n$-coinvariants, $\Sym^n(U)$, is the free complex vector space on symbols of the form $1 \otimes \tau \otimes \alpha_1^{\epsilon_1} \cdots \alpha_n^{\epsilon_n}$. Let $\phi_n \colon \Sym^n(U) \to \cH_n$ be the $\bC$-linear isomorphism given by
	 \[
	 1 \otimes \tau \otimes \alpha_1^{\epsilon_1} \cdots \alpha_n^{\epsilon_n} \mapsto  \tau  \alpha_1^{\epsilon_1} \cdots \alpha_n^{\epsilon_n}.
	 \]
We show that $\phi_n$ is $C_n$-linear. Denote $ \alpha_1^{\epsilon_1} \cdots \alpha_n^{\epsilon_n}$ by $c$, and let $c_1 \sigma_1, c_2\sigma_2$ be two basis elements of $\cH_n$. We have 
\begin{align*}
	 	(c_1 \sigma_1 \otimes c_2 \sigma_2)(1 \otimes \tau \otimes c)  &= (c_1 \sigma_1 \otimes  c_2 \sigma_2 \tau \otimes c) \\
	 	& = (\sigma_1 \otimes \sigma_2 \tau)(c_1^{\sigma_1^{-1}} \otimes c_2^{(\sigma_2 \tau)^{-1}} \otimes c)\\
	 	& = \zeta^{|c_1|^2} (-1)^{|c_1|(|c|+|c_2|)} \sigma_1 \otimes \sigma_2 \tau \otimes c_2^{(\sigma_2 \tau)^{-1}}  c (c_1^{-1})^{\sigma_1^{-1}} \\
	 	&= \zeta^{|c_1|^2} (-1)^{|c_1|(|c| +|c_2|)} 1 \otimes \sigma_2 \tau \sigma_1^{-1} \otimes c_2^{\sigma_1(\sigma_2 \tau)^{-1}}  c^{\sigma_1} c_1^{-1},
	 \end{align*}
	 which under $\phi_n$ maps to $\zeta^{|c_1|^2} (-1)^{|c_1|(|c|+|c_2|)}   c_2  c^{\sigma_2 \tau } (c_1^{-1})^{\sigma_2 \tau \sigma_1^{-1} } \sigma_2 \tau \sigma_1^{-1}$. On the other hand in $\cH_n$, we have \begin{align*}
	 	(c_1 \sigma_1 \otimes c_2 \sigma_2) \phi(1 \otimes \tau \otimes c)  & = (c_1 \sigma_1 \otimes c_2 \sigma_2) c^{\tau} \tau \\
	 	&= \zeta^{|c_1|^2} (-1)^{|c_1|(|c|+|c_2|)} c_2 \sigma_2 c^{\tau} \tau \sigma_1^{-1} c_1^{-1} \\
	 	& = \zeta^{|c_1|^2} (-1)^{|c_1|(|c|+|c_2|)}  c_2 c^{\sigma_2 \tau } (c_1^{-1})^{\sigma_2 \tau \sigma_1^{-1}}\sigma_2 \tau \sigma_1^{-1}.
	 \end{align*}
Thus shows that $\phi_n$ is $C_n$-linear.

Now, both $\Sym(U)$ and $R'$ are concentrated on the diagonal of $\cC \boxtimes \cC$, that is, they are only non-zero on objects of the form $([n], [n])$. Thus the $\phi_n$'s define an isomorphism $\phi \colon \Sym(U) \to R'$ of $\cC \boxtimes \cC$ modules. To complete the proof, we simply observe that $\phi$ is an algebra homomorphism. Indeed, suppose that $x \in \Sym^n(U)$ and $y \in \Sym^m(U)$. Write $x=1 \otimes \sigma \otimes \alpha_1^{\epsilon_1} \cdots \alpha_n^{\epsilon_n}$ and $y=1 \otimes \tau \otimes \alpha_1^{\delta_1} \cdots \alpha_m^{\delta_m}$. Then one finds
\begin{displaymath}
xy=1 \otimes \sigma \tau \otimes \alpha_1^{\epsilon_1} \cdots \alpha_n^{\epsilon_n} \alpha_{n+1}^{\delta_1} \cdots \alpha_{n+m}^{\delta_m},
\end{displaymath}
(where here $\tau$ is regarded as a permutation of $\{n+1,\ldots,n+m\}$) and so
\begin{displaymath}
\phi_{n+m}(xy)=\sigma \tau \alpha_1^{\epsilon_1} \cdots \alpha_n^{\epsilon_n} \alpha_{n+1}^{\delta_1} \cdots \alpha_{n+m}^{\delta_m},
\end{displaymath}
which is exactly $\phi_n(x) \phi_m(y)=\iota_{n,m}(\phi_n(x),\phi_m(y))$.
\end{proof}

Given two $\cC$-modules $M$ and $N$, we let $M \boxtimes N$ denote the $\cC \boxtimes \cC$ module given by $(x,y) \mapsto M(x) \otimes N(y)$. If $M$ is an $\cH_n$-module, we regard $M$ as a $\cC$-module that is~0 outside of degree $n$. We now come to the main result of this section:

\begin{theorem} \label{thm:ideal-structure2}
The $\cC \boxtimes \cC$ algebra $\Sym(U)$ decomposes as $\bigoplus 2^{-\delta(\lambda)} (S_{\lambda} \boxtimes S_{\lambda})$, where the sum is over all strict partitions $\lambda$. Moreover, if $I^{\lambda}$ denotes the ideal of $\Sym(U)$ generated by the $\lambda$ summand then $I^{\lambda}$ contains the $\mu$ summand if and only if $\lambda \subset \mu$.
\end{theorem}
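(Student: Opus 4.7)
The plan is to transport the claim through the chain of equivalences summarized in \S\ref{ss:class-overview}, reducing it to Theorem~\ref{thm:ideal-structure1}. First I would unwind the identification $\Sym(U) \cong R'$ of Proposition~\ref{prop:R=SymU} and then apply the transpose equivalence $\cC^{\op} \cong \cC$ to the first factor; this identifies $R'$, as a $\cC \boxtimes \cC$ algebra, with the canonical $\cC^{\op} \boxtimes \cC$ algebra $R$ given by $R(x,y) = \Hom_{\cC}(x,y)$. Since $\Sym(U)$ is supercommutative its two-sided ideals coincide with its left ideals, and Proposition~\ref{prop:ideal-corr} then puts these in bijection with left tensor ideals of $\cC$, in a way compatible with generation (since the bijection is built from an identification of underlying sub-superfunctors).

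For the decomposition, I would apply Proposition~\ref{prop:Jlambda} to $B = \cH_n$: the $(\cH_n, \cH_n)$-bimodule $R([n],[n]) = \cH_n$ splits as $\bigoplus_{\lambda \in \Lambda_n} J^\lambda$ with each $J^\lambda$ simple as a bimodule. The standard theory of semisimple superalgebras identifies $J^\lambda$ with a balanced tensor product of $S_\lambda^{\dag}$ and $S_\lambda$ over $D_\lambda = \End_{\cH_n}(S_\lambda)^{\op}$; here $D_\lambda$ is $\bC$ when $\delta(\lambda)=0$ and isomorphic to $\Cl_1$ when $\delta(\lambda)=1$, and in the latter case $J^\lambda$ has exactly half the $\bC$-dimension of $S_\lambda^{\dag} \otimes S_\lambda$. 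This matches the notation $2^{-\delta(\lambda)}(S_\lambda \boxtimes S_\lambda)$ in the theorem, after invoking $S_\lambda^{\dag} \cong S_\lambda$ from Corollary~\ref{cor:Slambda-dag} to turn the left factor $S_\lambda^\dag$ into $S_\lambda$ under the transpose equivalence.

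For the generation claim, the $\lambda$-summand in the decomposition is precisely the two-sided ideal $J^\lambda \subset \cH_n = R([n],[n])$, and under Proposition~\ref{prop:ideal-corr} the left ideal of $R$ it generates corresponds to the left tensor ideal of $\cC$ generated by $J^\lambda$, which is the ideal $I^\lambda$ of Theorem~\ref{thm:ideal-structure1}. Since $\cC$ satisfies (C1)--(C4) as verified above, that theorem gives
\[
I^\lambda_m = \bigoplus_{\lambda \subseteq \mu,\, \mu \in \Lambda_m} J^\mu,
\]
which, translated back through the equivalences, says exactly that the ideal of $\Sym(U)$ generated by the $\lambda$-summand contains the $\mu$-summand if and only if $\lambda \subseteq \mu$.

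I expect the main obstacle to be the bookkeeping of the half-tensor-product factor $2^{-\delta(\lambda)}$ through the composite identifications, in particular verifying that the notation $2^{-\delta(\lambda)}(S_\lambda \boxtimes S_\lambda)$ used in the theorem really does coincide with the balanced tensor product over $\Cl_1$ that appears naturally from the Wedderburn-type decomposition; apart from this essentially formal bookkeeping, the mathematical content is already contained in Theorem~\ref{thm:ideal-structure1}.
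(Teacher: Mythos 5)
Your proposal is correct and follows essentially the same route as the paper: identify $\Sym(U)$ with $R'$ via Proposition~\ref{prop:R=SymU}, decompose $R=\bigoplus_n \cH_n=\bigoplus_\lambda J^\lambda$ as a bimodule, and transfer the containment statement from Theorem~\ref{thm:ideal-structure1} through Proposition~\ref{prop:ideal-corr}, using Corollary~\ref{cor:Slambda-dag} to replace $S_\lambda^\dag$ by $S_\lambda$. The only point you defer as ``bookkeeping'' is handled explicitly in the paper: when $\delta(\lambda)=1$ one normalizes an odd automorphism $\alpha$ of $S_\lambda$ to square to $1$ and identifies $J^\lambda$ with the $\zeta$-eigenspace of $(\zeta\alpha^*)\otimes\alpha$ inside $S_\lambda^*\otimes S_\lambda$, which is precisely the half tensor product $2^{-1}(S_\lambda^*\otimes S_\lambda)$, so your balanced-tensor/dimension-count picture does match the notation in the statement.
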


\begin{proof}
The algebra $\cH_n$ is naturally an $(\cH_n,\cH_n)$-bimodule, and as such it decomposes as $\bigoplus_{\lambda \in \Lambda_n} J^{\lambda}$ (Proposition~\ref{prop:Jlambda}). We can regard any $(\cH_n, \cH_n)$-bimodule as a left module over $\cH_n^{\op} \otimes \cH_n$, and thus as a $\cC^{\op} \boxtimes \cC$ module concentrated in degree $(n,n)$. With this convention, we have the decomposition of $\cC^{\op} \boxtimes \cC$ modules
\begin{displaymath}
R=\bigoplus_{n \ge 0} \cH_n=\bigoplus_{\lambda \in \Lambda} J^{\lambda}.
\end{displaymath}
The first identification above is essentially the definition of $R$, while the second follows from the decomposition of $\cH_n$. It follows from Theorem~\ref{thm:ideal-structure1} and Proposition~\ref{prop:ideal-corr} that the ideal of $R$ generated by $J^{\lambda}$ contains exactly those $J^{\mu}$ with $\lambda \subset \mu$.

As stated above, $J^{\lambda}$ can be regarded as a left module over $\cH_n^{\op} \otimes \cH_n$. As such, it is isomorphic to the submodule of $\End(S_{\lambda})=S_{\lambda}^* \otimes S_{\lambda}$ consisting of endomorphisms that supercommute with $\End_{\cH_n}(S_{\lambda})$; this follows from \cite[(3.4)]{chengwang}. If $\delta(\lambda)=0$ then $\End_{\cH_n}(S_{\lambda})=\bC$, and we find $J^{\lambda} \cong S_{\lambda}^* \otimes S_{\lambda}$. Suppose now that $\delta(\lambda)=1$. Then there exists an isomorphism $S_{\lambda} \to S_{\lambda}[1]$. From this, we find that there is an odd isomorphism $\alpha \colon S_{\lambda} \to S_{\lambda}$ that squares to~$1$ (note that any odd isomorphism squares to a scalar, by the super version of Schur's lemma \cite[Lemma~3.4]{chengwang}, and can thus be normalized to square to~1). The dual map $\alpha^* \colon S_{\lambda}^* \to S_{\lambda}^*$ squares to $-1$. We see that $J^{\lambda}$ is isomorphic to the $1$-eigenspace of $\alpha^* \otimes \alpha$ on $S_{\lambda}^* \otimes S_{\lambda}$. This coincides with the $\zeta$-eigenspace of $(\zeta \alpha^*) \otimes \alpha$. As $\zeta \alpha^*$ endows $S_{\lambda}^*$ with an isomeric structure, we see that $J^{\lambda}$ is isomorphic to the half tensor product $2^{-1} (S_{\lambda}^* \otimes S_{\lambda})$. Thus, in all cases, we see that $J^{\lambda} \cong 2^{-\delta(\lambda)} (S_{\lambda}^* \otimes S_{\lambda})$.

Via transpose on the first factor, we can regard $J^{\lambda}$ as an $\cH_n \otimes \cH_n$ module. As such, we see that it is isomorphic to $2^{-\delta(\lambda)}(S_{\lambda}^{\dag} \otimes S_{\lambda})$, which (by Corollary~\ref{cor:Slambda-dag}) is isomorphic to $2^{-\delta(\lambda)}(S_{\lambda} \otimes S_{\lambda})$. Combining this observation with the decomposition in the first paragraph, we find
\begin{displaymath}
R' = \bigoplus_{\lambda \in \Lambda} 2^{-\delta(\lambda)} (S_{\lambda} \boxtimes S_{\lambda}).
\end{displaymath}
Furthermore, we see that the ideal generated by the $\lambda$ summand contains the $\mu$ summand if and only if $\lambda \subset \mu$. The theorem now follows from the identification $R'=\Sym(U)$ (Proposition~\ref{prop:R=SymU}).
\end{proof}

\subsection{Application to the isomeric algebra $A$}

Recall that $\bV=\bC^{\infty|\infty}$ is our fixed infinite dimensional isomeric vector space.  Maintaining the notation from \S \ref{ss:A}, we let $\bU=2^{-1}(\bV \otimes \bV)$, which is a polynomial representation of $\fq \times \fq$, and we let $A=\Sym(\bU)$, a bivariate isomeric algebra. As noted in \S \ref{ss:A}, we have a Cauchy decomposition
\begin{align} \label{eqn:Q-cauchy}
A = \bigoplus_{\lambda \in \Lambda} 2^{-\delta(\lambda)} (T_{\lambda} \otimes T_{\lambda}),
\end{align}
where $\Lambda$ is the poset of strict partitions, as above, and $\delta(\lambda)$ is as in \S \ref{ss:polrep}. The follow theorem, which classifies the equivariant ideals of $A$, is one of the main results of this paper:

\begin{theorem} \label{thm:ideal-structure} 
Suppose $\lambda$ is a strict partition, and let $I^{\lambda}$ be the ideal of $A$ generated by $2^{-\delta(\lambda)} (T_{\lambda} \otimes T_{\lambda})$. Then we have $I^{\lambda} = \bigoplus_{\lambda \subseteq \mu} 2^{-\delta(\mu)} (T_{\mu} \otimes T_{\mu})$ where $\mu$ varies over strict partitions.
\end{theorem}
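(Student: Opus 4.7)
The plan is to deduce Theorem~\ref{thm:ideal-structure} directly from Theorem~\ref{thm:ideal-structure2} by transporting the statement through Schur--Sergeev duality, following the outline in \S\ref{ss:class-overview} (executed in reverse). All the real combinatorial work has already been done in the abstract setting of the Hecke--Clifford category; what remains is to check that the duality identifies the algebra $A$ with the algebra $R'$ together with its decomposition.

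First, I would invoke Schur--Sergeev duality to obtain an equivalence of symmetric monoidal supercategories $\Mod_{\cC} \cong \Rep^{\pol}(\fq)$ under which $S_\lambda$ corresponds to $T_\lambda$, and hence an equivalence $\Mod_{\cC \boxtimes \cC} \cong \Rep^{\pol}(\fq \times \fq)$ under which $S_\lambda \boxtimes S_\mu$ corresponds to $T_\lambda \otimes T_\mu$. Since this equivalence is symmetric monoidal, it carries commutative algebras to commutative algebras, preserves ideal-generation, and intertwines $\Sym(-)$ on both sides.

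Second, I would verify that under this equivalence the $\cC \boxtimes \cC$-module $U$ (the quotient of $\Cl_1 \otimes \Cl_1$ by $\alpha_1 \otimes \alpha_1 - \zeta$) corresponds to $\bU = 2^{-1}(\bV \otimes \bV)$. The duality sends $\Cl_1 = \cH_1$ (regarded as the standard $\cH_1$-module) to $\bV$, so $\Cl_1 \otimes \Cl_1$ corresponds to $\bV \otimes \bV$; the relation $\alpha_1 \otimes \alpha_1 = \zeta$ exactly cuts out the $\zeta$-eigenspace of the product of queer structures, which is precisely the half tensor product $\bU$ by definition (see \S\ref{ss:qvec}). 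Consequently $R' = \Sym(U)$ corresponds to $A = \Sym(\bU)$ as $\fq \times \fq$-equivariant algebras, and the decomposition of $R'$ in Theorem~\ref{thm:ideal-structure2} matches the Cauchy decomposition \eqref{eqn:Q-cauchy} summand by summand (the $2^{-\delta(\lambda)}$ factors appear symmetrically on both sides).

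Finally, I would conclude by transporting Theorem~\ref{thm:ideal-structure2}: the ideal of $R'$ generated by its $\lambda$-summand contains exactly the $\mu$-summands with $\lambda \subseteq \mu$, so the same holds for $A$, which is the claim. The main potential obstacle is bookkeeping rather than genuine mathematical content: namely, carefully matching the half tensor product factors $2^{-\delta(\lambda)}$ on both sides and confirming that the Schur--Sergeev equivalence respects the symmetry (so that the diagonal module $S_\lambda \boxtimes S_\lambda$ really does go to the diagonal summand $T_\lambda \otimes T_\lambda$ in the Cauchy decomposition). This amounts to tracking how the transpose equivalence $\cC^{\op} \cong \cC$ of \S\ref{ss:R} interacts with the duality, which is routine given Corollary~\ref{cor:Slambda-dag}.
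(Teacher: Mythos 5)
Your proposal is correct and takes essentially the same route as the paper: one transports Theorem~\ref{thm:ideal-structure2} through the symmetric monoidal Schur--Sergeev equivalence $\Mod_{\cC \boxtimes \cC} \to \Rep^{\pol}(\fq \times \fq)$ (with $S_\lambda \boxtimes S_\mu \mapsto T_\lambda \otimes T_\mu$), noting that $U = 2^{-1}(S_{(1)} \boxtimes S_{(1)})$ corresponds to $\bU$ and hence $\Sym(U)$ to $A$. The transpose bookkeeping you flag at the end is already absorbed into the proof of Theorem~\ref{thm:ideal-structure2} via Corollary~\ref{cor:Slambda-dag}, so no further check is needed at this stage.
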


\begin{proof}
One formulation of Schur--Sergeev duality (see \cite[\S 8.2]{supermon}) asserts that there is an equivalence of symmetric monoidal supercategories $\Mod_{\cC} \to \Rep^{\pol}(\fq)$ satisfying $S_{\lambda} \mapsto T_{\lambda}$. Here $\Mod_{\cC}$ is endowed with the tensor product induced by the monoidal structure on $\cC$ (Day convolution). It follows that we have an equivalence of symmetric monoidal supercategories
\begin{displaymath}
\Phi \colon \Mod_{\cC \boxtimes \cC} \to \Rep^{\pol}(\fq \times \fq)
\end{displaymath}
satisfying $\Phi(S_{\lambda} \boxtimes S_{\mu})=T_{\lambda} \otimes T_{\mu}$. Since $U=2^{-1}(S_{(1)} \boxtimes S_{(1)})$, it follows that $\bU=\Phi(U)$, and $A=\Phi(\Sym(U))$. The result now follows from Theorem~\ref{thm:ideal-structure2}.
\end{proof}

Theorem~\ref{thm:ideal-structure} has an important consequence:

\begin{corollary} \label{cor:A-bounded}
If $I \subset A$ is a nonzero ideal, then $A/I$ is bounded.
\end{corollary}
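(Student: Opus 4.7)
My approach is to reduce the corollary to a short combinatorial claim about strict partitions, using multiplicity-freeness of the Cauchy decomposition together with Theorem~\ref{thm:ideal-structure}.

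Since the decomposition \eqref{eqn:Q-cauchy} is multiplicity-free and each summand $2^{-\delta(\lambda)}(T_\lambda \otimes T_\lambda)$ is irreducible under $\fq \times \fq$, every equivariant subrepresentation of $A$ -- and hence every equivariant ideal -- is the direct sum of the Cauchy summands it contains. Thus I can write
\[
I = \bigoplus_{\lambda \in S} 2^{-\delta(\lambda)}(T_\lambda \otimes T_\lambda)
\]
for some nonempty $S \subseteq \Lambda$. Choosing any $\lambda_0 \in S$, Theorem~\ref{thm:ideal-structure} gives $I^{\lambda_0} \subseteq I$, so $A/I$ is a quotient of
\[
\bigoplus_{\mu \not\supseteq \lambda_0} 2^{-\delta(\mu)}(T_\mu \otimes T_\mu).
\]
It therefore suffices to show that the lengths of strict partitions $\mu$ not containing $\lambda_0$ are bounded.

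The combinatorial step is elementary. Set $r = \ell(\lambda_0)$ and $N = \max_{1 \leq i \leq r} \bigl((\lambda_0)_i + i - 1\bigr)$. Any strict partition $\mu$ satisfies $\mu_i \geq \ell(\mu) - i + 1$ for each $i \leq \ell(\mu)$, just by strictness. So if $\ell(\mu) \geq N$, then $\mu_i \geq N - i + 1 \geq (\lambda_0)_i$ for all $1 \leq i \leq r$, whence $\mu \supseteq \lambda_0$. Contrapositively, every $\mu$ indexing a constituent of $A/I$ satisfies $\ell(\mu) < N$, and so $A/I$ is bounded. I do not foresee any serious obstacle here: once Theorem~\ref{thm:ideal-structure} is in hand, the only ingredient left is the elementary observation that a sufficiently long strict partition must dominate any fixed Young diagram.
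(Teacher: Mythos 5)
Your proof is correct and follows essentially the same route as the paper: extract some $I^{\lambda_0}\subseteq I$ (the paper states this directly; you justify it via multiplicity-freeness) and then use the elementary fact that a strict partition of large length must contain $\lambda_0$. Note that for a strict partition your bound $N=\max_i((\lambda_0)_i+i-1)$ is just $(\lambda_0)_1$, which is exactly the bound used in the paper.
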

      
\begin{proof}
  Since $I \ne 0$, there exists a strict partition $\lambda$ such that $I^\lambda \subseteq I$. Suppose that $\mu$ is a strict partition such that $\ell(\mu) \ge \lambda_1$. Then $\mu_i \ge \ell(\mu)-i+1 \ge \lambda_1-i+1 \ge \lambda_i$ and hence $\lambda \subseteq \mu$. So Theorem~\ref{thm:ideal-structure} implies that $\ell(A/I) < \lambda_1$.
\end{proof}

\begin{remark} \label{rmk:detideal}
Let $X=\Spec(A(\bC^{n|n},\bC^{m|m}))$, which we regard as a supervariety (i.e.,  a functor from superalgebras to sets). We can identify points of $X$ with linear maps $\bC^{n|n} \to \bC^{m|m}$ commuting with the isomeric structures. The image of any such map is isomorphic to $\bC^{r|r}$ for some $r$, which we refer to as the rank. Let $I_r \subset A$ be the sum of the $\lambda$ summands in the Cauchy decomposition with $\ell(\lambda)>r$. Then $I_r$ is an equivariant ideal of $A$. It follows from properties of the $\bT_{\lambda}$ (namely, that $\bT_{\lambda}(\bC^{r|r})=0$ if and only if $\ell(\lambda)>r$) that $V(I_r(\bC^{n|n}, \bC^{m|m}))$ is the closed subvariety of $X$ consisting of maps of rank $\le r$. Thus $I_r$ is an isomeric analog of the classical determinantal ideal. It follows from Theorem~\ref{thm:ideal-structure} that $I_r$ is generated by the $\lambda$ summand of the Cauchy decomposition, where $\lambda$ is the ``staircase'' partition $(r+1, r, \ldots, 2, 1)$.
\end{remark}

\section{Local structure of $A$-modules} \label{s:local}

\subsection{Statement of results}

Let $\fm$ be the ideal of $A$ generated by $x_{i,j}-\delta_{i,j}$ and $y_{i,j}$ for all $i$ and $j$. In this section, we study the local structure of $A$-modules at $\fm$. Our main result is:

\begin{theorem} \label{thm:loc}
Let $M$ be an $A$-module. Then there is a canonical and functorial isomorphism $M_{\fm} \to M/\fm M \otimes_{\bC} A_{\fm}$ of $\vert A_{\fm} \vert$-modules. In particular, $M_{\fm}$ is a free $\vert A_{\fm} \vert$-module.
\end{theorem}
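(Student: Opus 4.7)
The plan is to exploit the geometric fact that $\fm$ corresponds to the identity queer map, which lies in an open orbit of the $\bQ \times \bQ$ action on $\Spec(A)$; I would reduce to each finite level and invoke the trivialization of equivariant sheaves on a homogeneous orbit.

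First I would compute the derivations on $A$ coming from the $\fq \times \fq$-action. For each pair $(a,b)$, the even matrix unit $E_{a,b} \in \fq$ (with $E_{a,b}(e_i) = \delta_{b,i} e_a$, $E_{a,b}(f_i) = \delta_{b,i} f_a$) and the analogous odd element $F_{a,b}$ act as super-derivations on $A$ through either factor of $\fq \times \fq$. A direct calculation on the basis $\{v_{i,j}, w_{i,j}\}$ of $\bU$ yields, e.g., $E_{a,b}^{(1)}(x_{i,j}) = \delta_{b,i} x_{a,j}$ and $F_{a,b}^{(1)}(x_{i,j}) = -\zeta\, \delta_{b,i} y_{a,j}$, with analogous formulas on the $y_{i,j}$ and for the second factor. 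Specializing at the point $p$ cut out by $\fm$, these show that the induced linear map $\fq \times \fq \to (\fm/\fm^2)^*$ is surjective; its kernel is a sub-Lie-superalgebra of $\fq \times \fq$, namely the stabilizer of $p$, which is essentially a diagonal copy of $\fq$.

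At finite level $n$, this surjectivity implies that the orbit $O_n$ of $p_n$ under $G_n := \bQ_n \times \bQ_n$ is open in $X_n := \Spec(A(\bC^{n|n}, \bC^{n|n}))$, and identifies with the homogeneous superspace $G_n/\mathrm{Stab}(p_n)$. The projection $G_n \to O_n$ admits an explicit section, so is a trivial principal bundle. By the standard equivalence between $G_n$-equivariant quasi-coherent sheaves on $O_n$ and representations of the stabilizer (via fiber at $p_n$), the underlying $\mathcal{O}_{O_n}$-module of any equivariant $A_n$-module $M_n$ restricted to $O_n$ is $\mathcal{O}_{O_n} \otimes_\bC (M_n/\fm_n M_n)$. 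Localizing at $p_n \in O_n$ yields the finite-level identity $(M_n)_{\fm_n} \cong (M_n/\fm_n M_n) \otimes_\bC (A_n)_{\fm_n}$, canonical and functorial in $M_n$.

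Finally, I would pass to the colimit over $n$: since $A = \bigcup_n A_n$, $M = \bigcup_n M_n$ (using that $M$ is a polynomial representation of $\fq \times \fq$), $\fm = \bigcup_n \fm_n$, and both localization and $\otimes_\bC$ commute with filtered colimits, the finite-level isomorphisms combine into the desired canonical and functorial isomorphism $M_\fm \to (M/\fm M) \otimes_\bC A_\fm$. The main technical obstacle will be the equivariant-sheaf trivialization step: while the classical analog for reductive groups is standard, the super version requires careful handling of the odd tangent directions and the sign conventions intrinsic to the queer setting, and one must verify that the resulting non-equivariant $\mathcal{O}$-module isomorphism is itself canonical.
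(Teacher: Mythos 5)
Your geometric picture is correct in outline (the point cut out by $\fm$ does lie on an open super-orbit, and its stabilizer is the twisted diagonal $\fh=\{(\tau g,g)\}\cong\fq$), and this finite-level ``open orbit plus equivariant descent plus colimit'' route is genuinely different from the paper's. But as written there is a real gap at exactly the step that carries the content of the theorem. The isomorphism you extract at level $n$ from the associated-bundle description only becomes an isomorphism onto $(M_n/\fm_n M_n)\otimes_{\bC}(A_n)_{\fm_n}$ after choosing a local trivialization of the stabilizer-bundle $\bQ_n\times\bQ_n\to O_n$ near the base point, while the theorem asserts a \emph{canonical and functorial} map; worse, your colimit step silently requires these level-$n$ trivializations to be chosen compatibly as $n$ grows, or the finite-level isomorphisms need not assemble into a map of directed systems at all. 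You flag this as a ``technical obstacle,'' but resolving it is not a routine verification --- it is essentially the construction the paper performs. The paper fixes the choice once and for all at infinite level: it decomposes $\fq\times\fq=\fh\oplus\fk$ with $\fk=\fb\times\fn$ triangular, integrates $\fk$ to an explicit group $K=B\times U$ (this is, in effect, your compatible family of sections through the base point), and defines the canonical map $\phi_M\colon M\to M/\fm M\otimes\bC[K]$ by coacting along $K$ and reducing mod $\fm$. Proving that $\phi_M$ localizes to an isomorphism then takes genuine work: an explicit inverse to $\phi_A$ on localizations, the $\fq\times\fq$-stability of $\ker\phi_M$, an equivariant finite-generation lemma, and an equivariant Nakayama lemma (itself proved by reduction to finite level). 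Until you specify the sections and verify functoriality in $M$ and compatibility in $n$, what your argument yields is only ``an isomorphism exists after choices,'' which is weaker than the statement.

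Two further points you assert but would have to establish if you pursue this route: the descent equivalence between equivariant sheaves on the orbit and stabilizer representations needs the action of the algebraic supergroup $\bQ_n\times\bQ_n$ on $M_n$, not merely of $\fq_n\times\fq_n$ (this does integrate for polynomial modules, but it must be said --- the paper only needs, and only proves, integration of the triangular part $\fk$ to $K$); and openness of the orbit must be checked in the super sense, i.e.\ surjectivity of $\fq\times\fq\to(\fm/\fm^2)^*$ including the odd directions together with identification of the full super-stabilizer, which is exactly the decomposition $\fq\times\fq=\fh\oplus\fk$ plus the $\fh$-stability of $\fm$ that the paper proves by hand. With those ingredients supplied and a once-and-for-all choice of section (equivalently, a group like $K$), your approach does go through, and its payoff is conceptual clarity: it explains the freeness as the standard trivialization of an equivariant sheaf over a homogeneous open orbit, whereas the paper trades that geometry for explicit infinite-level formulas that make canonicity and functoriality automatic.
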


The proof will take the rest of the section. The arguments are similar to those used in \cite[\S 4]{symsp1}, \cite[\S 3.2]{sym2noeth}, and \cite[\S 5]{periplectic}. Theorem~\ref{thm:loc} is a key result used to study generic $A$-modules in the following section.

\subsection{The algebras $\fh$ and $\fk$}

Let $\fh$ be the subalgebra of $\fq \times \fq$ consisting of all pairs of the form $(\tau g, g)$, where $\tau$ is the Chevalley automorphism of $\fq \times \fq$. Note that $\fh$ is isomorphic to $\fq$ (via the first projection). Let $\fb$ (resp.\ $\fn$) be the subalgebra of $\fq$ consisting of matrices of the form
\begin{displaymath}
\begin{pmatrix} a & b \\ -b & a \end{pmatrix}.
\end{displaymath}
with $a$ and $b$ upper triangular (resp.\ strictly upper triangular), and let $\fk=\fb \times \fn$, regarded as a subalgebra of $\fq \times \fq$.

\begin{proposition}
We have $\fq \times \fq=\fh \oplus \fk$.
\end{proposition}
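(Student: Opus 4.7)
The plan is to reduce the claim to the elementary statement $\fq = \fb \oplus \tau(\fn)$ as vector spaces, where $\tau(\fn)$ is the image of $\fn$ under the Chevalley automorphism.

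First I would set up the obvious linear map
\[
\Phi \colon \fh \oplus \fk \longrightarrow \fq \times \fq, \qquad \bigl((\tau g, g),\, (b, n)\bigr) \longmapsto (\tau g + b,\, g + n),
\]
and observe that the decomposition $\fq \times \fq = \fh \oplus \fk$ is equivalent to $\Phi$ being bijective. Given $(X, Y) \in \fq \times \fq$, any preimage must satisfy $g = Y - n$, so that the first coordinate equation becomes
\[
X - \tau Y = b - \tau n.
\]
Thus $\Phi$ is bijective if and only if every element of $\fq$ admits a unique decomposition of the form $b - \tau n$ with $b \in \fb$ and $n \in \fn$, i.e., if and only if $\fq = \fb \oplus \tau(\fn)$.

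Next I would identify $\tau(\fn)$ explicitly. Using the formula for the Chevalley automorphism, $\tau$ sends a matrix $\begin{pmatrix} a & b \\ -b & a \end{pmatrix}$ to $\begin{pmatrix} -a^t & -\zeta b^t \\ \zeta b^t & -a^t \end{pmatrix}$. Since transposition sends strictly upper triangular matrices to strictly lower triangular matrices, $\tau(\fn)$ is exactly the space of matrices $\begin{pmatrix} a' & b' \\ -b' & a' \end{pmatrix}$ with $a'$ and $b'$ both strictly lower triangular.

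Finally, the decomposition $\fq = \fb \oplus \tau(\fn)$ reduces to the elementary fact that the space $M_n$ of $n\times n$ matrices decomposes as upper triangular plus strictly lower triangular, applied independently to the two blocks $a$ and $b$. At each finite level $\fq_n$, the decomposition is immediate; since the two subspaces $\fb_n \subset \fb_{n+1}$ and $\tau(\fn_n) \subset \tau(\fn_{n+1})$ are compatible with the inclusions $\fq_n \hookrightarrow \fq_{n+1}$, passing to the direct limit yields the decomposition for $\fq$. There is no serious obstacle here; the only thing to be careful about is matching conventions in the formula for $\tau$ so that $\tau(\fn)$ comes out as claimed, after which the argument is purely formal.
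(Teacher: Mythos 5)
Your proposal is correct and is essentially the paper's argument in a lightly repackaged form: solving for $g$ from the second coordinate and reducing to $\fq=\fb\oplus\tau(\fn)$ is exactly what the paper does block-by-block when it transposes the second-column equations, rescales by $\zeta$, and adds, landing on the same fact that a matrix decomposes uniquely as upper triangular plus strictly lower triangular. Your identification of $\tau(\fn)$ and the formula for $\tau$ match the paper's conventions, so there is no gap.
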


\begin{proof}
To simplify notation, put
\begin{displaymath}
\{a,b\} = \begin{pmatrix} a & b \\ -b & a \end{pmatrix}.
\end{displaymath}
Let $a_1$, $a_2$, $b_1$, and $b_2$ be given matrices. We must show that there are unique matrices $c_1$, $c_2$, $d_1$, $d_2$, $e_1$, $e_2$, with $d_1,d_2$ upper triangular and $e_1,e_2$ strictly upper triangular such that
\begin{displaymath}
(\{a_1,a_2\}, \{b_1,b_2\}) = (\{c_1,c_2\}, \tau^{-1} \{c_1, c_2\}) + (\{d_1,d_2\}, \{e_1,e_2\}).
\end{displaymath}
The above equation is equivalent to
\begin{align*}
a_1 &= c_1+d_1 & b_1 &= -c_1^t+e_1 \\
a_2 &= c_2+d_2 & b_2 &= \zeta c_2^t+e_2.
\end{align*}
Apply transpose to the right two equations and multiply the bottom right equation by $\zeta$; then add the left column to the right. We obtain
\begin{align*}
a_1 &= c_1+d_1 & a_1+b_1^t &= d_1+e^t_1 \\
a_2 &= c_2+d_2 & a_2+\zeta b_2^t &= d_2+\zeta e^t_2
\end{align*}
The two right equations have unique solutions, as every matrix can be written uniquely as the sum of an upper triangular matrix and a strictly lower triangular matrix. The two left equations then obvious admit unique solutions as well.
\end{proof}

\begin{proposition}
The ideal $\fm$ is $\fh$-stable. 
\end{proposition}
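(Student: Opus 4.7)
The plan is to reduce to checking the generators of $\fm$ and then carry out a direct computation. Since $\fq \times \fq$ acts on $A = \Sym(\bU)$ by derivations (the action extends uniquely from $\bU$ and respects the product), and since $\fm$ is generated by the elements $x_{i,j} - \delta_{i,j}$ and $y_{i,j}$, it suffices to verify that for every homogeneous element $(X,Y) \in \fh$ and every generator $z$ of $\fm$ we have $(X,Y)\cdot z \in \fm$. Write $\fh = \fh_0 \oplus \fh_1$, where $\fh_0$ consists of pairs $(\tau g_0, g_0)$ with $g_0 = \{c,0\}$ even and $\fh_1$ of pairs $(\tau g_1, g_1)$ with $g_1 = \{0,d\}$ odd; then $\tau g_0 = -\{c^t, 0\}$ and $\tau g_1 = -\{0, \zeta d^t\}$. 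Recall also that $x_{i,j}, y_{i,j}$ correspond to the basis vectors $v_{i,j}, w_{i,j}$ of $\bU$ under $A = \Sym(\bU)$.

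For the even part, I will unpack the sign rule and compute $(\tau g_0, g_0)$ on $v_{i,j}$ and $w_{i,j}$. The $e \otimes e$ and $f \otimes f$ contributions to $v_{i,j}$ transform by the same scalars (because the action of $\{c,0\}$ is the same on $e_k$ and $f_k$), so the image assembles cleanly into a combination of $v$'s; I expect to find
\[
(\tau g_0, g_0) \cdot v_{i,j} = -\sum_{k} c_{ik} v_{k,j} + \sum_{k} c_{kj} v_{i,k},
\]
and similarly for $w_{i,j}$ with $v$ replaced by $w$. Applying this to $x_{i,j} - \delta_{i,j}$ and reducing modulo $\fm$ (where $x_{k,j} \equiv \delta_{k,j}$) gives $-c_{ij} + c_{ij} = 0$; applying it to $y_{i,j}$ gives something already in the ideal $(y_{k,\ell})$. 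The precise cancellation $-c_{ik}\delta_{kj} + c_{kj}\delta_{ik} = 0$ is exactly what the minus sign in the definition $\tau \{c,0\} = -\{c^t,0\}$ produces, which explains why $\fh$ must be twisted by the Chevalley automorphism rather than taken to be the naive diagonal.

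For the odd part, the roles of $v$ and $w$ swap: the odd action of $(\tau g_1, g_1)$ sends $v_{i,j}$ to a combination of the $w$'s and vice versa, using the identities $(1+\zeta)\zeta = -(1-\zeta)$ and $(1-\zeta)\zeta = 1+\zeta$ to convert mixed $e \otimes f$ and $f \otimes e$ terms back into $w$'s or $v$'s. The expected outcome is
\[
(\tau g_1, g_1) \cdot v_{i,j} = -\sum_{k} d_{ik} w_{k,j} - \sum_{k} d_{kj} w_{i,k}, \qquad (\tau g_1, g_1) \cdot w_{i,j} = -\sum_{k} d_{ik} v_{k,j} + \sum_{k} d_{kj} v_{i,k};
\]
the first lies in $\fm$ because all $y$'s do, and the second, applied to $y_{i,j}$ and reduced modulo $\fm$, again yields $-d_{ij}+d_{ij}=0$. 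The main obstacle is purely bookkeeping: carefully tracking the sign $(-1)^{|Y||v|}$ coming from \eqref{eq:sign-rule} in the second-factor action on $\bV \otimes \bV$ along with the factor $\zeta$ in $\tau\{0,d\} = -\{0,\zeta d^t\}$, and then checking that these conspire with the coefficients $(1\pm\zeta)$ in the definitions of $v_{i,j}, w_{i,j}$ to produce the clean formulas above. Once those are in hand, the stability of $\fm$ is immediate.
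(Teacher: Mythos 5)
Your proposal is correct and is essentially the paper's own argument: reduce to the generators of $\fm$ and compute the action of $\fh$ on the $v_{i,j}$ and $w_{i,j}$ explicitly, observing that the constant terms cancel because of the Chevalley twist (the paper works with the basis elements $X'_{i,j}=(X_{i,j},\tau^{-1}X_{i,j})$ and $Y'_{i,j}=(Y_{i,j},\tau^{-1}Y_{i,j})$ rather than general matrices $c$, $d$, which is only a cosmetic difference). The formulas you ``expect to find'' for the even and odd actions do check out, including the identities $(1+\zeta)\zeta=-(1-\zeta)$ and $(1-\zeta)\zeta=1+\zeta$ in the odd case, so completing the bookkeeping you describe yields exactly the paper's proof.
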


\begin{proof}
Let $E_{i,j}$ be the elementary matrix whose $(i,j)$ entry is~1 and all other entries are~0; thus $E_{i,j}$ maps the $j$th basis vector to the $i$th basis vector. Let
\begin{displaymath}
X_{i,j} = \begin{pmatrix} E_{i,j} & 0 \\ 0 & E_{i,j} \end{pmatrix}, \qquad
Y_{i,j} = \begin{pmatrix} 0 & E_{i,j} \\ -E_{i,j} & 0 \end{pmatrix}.
\end{displaymath}
The $X_{i,j}$ and $Y_{i,j}$ elements form a basis for $\fq$. We have
\begin{align*}
X_{i,j}(e_k) &= \delta_{j,k} e_i & Y_{i,j}(e_k) &= -\delta_{j,k} f_i \\
X_{i,j}(f_k) &= \delta_{j,k} f_i & Y_{i,j}(f_k) &= \delta_{j,k} e_i.
\end{align*}
The elements $X'_{i,j}=(X_{i,j}, \tau^{-1} X_{i,j})$ and $Y'_{i,j}=(Y_{i,j}, \tau^{-1} Y_{i,j})$ form a basis for $\fh$. Note that $\tau^{-1} X_{i,j}=-X_{j,i}$ and $\tau^{-1} Y_{i,j}=\zeta Y_{j,i}$. A simple computation gives
\begin{align*}
X'_{i,j}(v_{k,\ell}) &= \delta_{j,k} v_{i,\ell} - \delta_{i,\ell} v_{k,j} &
Y'_{i,j}(v_{k,\ell}) &= -\zeta \cdot (\delta_{j,k} w_{i,\ell}+\delta_{i,\ell} w_{k,j}) \\
X'_{i,j}(w_{k,\ell}) &= \delta_{j,k} w_{i,\ell} - \delta_{i,\ell} w_{k,j} &
Y'_{i,j}(w_{k,\ell}) &= -\zeta \cdot (\delta_{j,k} v_{i,\ell}-\delta_{i,\ell} v_{k,j})
\end{align*}
Recall that the $x_{i,j}$ transform exactly like the $v_{i,j}$, and the $y_{i,j}$ transform exactly like the $w_{i,j}$. Note that
\begin{displaymath}
\delta_{j,k} x_{i,\ell}-\delta_{i,\ell} x_{k,j} = \delta_{j,k} (x_{i,\ell}-\delta_{i,\ell}) - \delta_{i,\ell} (x_{k,j}-\delta_{k,j}).
\end{displaymath}
We thus see that $X'_{i,j}$ and $Y'_{i,j}$ map each generator of $\fm$ to a linear combination of generators, which completes the proof.
\end{proof}

\begin{remark} \label{rmk:SpecA}
The space $\Spec(A)$ parametrizes bilinear forms $\langle, \rangle \colon \bV \otimes \bV \to \bC$ satisfying 
\begin{displaymath}
\langle \alpha(v), \alpha(w) \rangle = (-1)^{\vert v \vert} \zeta \langle v, w \rangle.
\end{displaymath}
The ideal $\fm$ corresponds to a standard form, and the above proposition simply says that the adjoint of an element $X$ of $\fq$ with respect to this form is given by $-\tau^{-1}X$.
\end{remark}

\begin{lemma} \label{lem:unit}
  If $\fa$ is a non-zero ideal of $A$, then $\fa+\fm=A$.
\end{lemma}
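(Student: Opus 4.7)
The plan is to combine the ideal classification (Theorem~\ref{thm:ideal-structure}) with the geometric description of the queer determinantal ideals in Remark~\ref{rmk:detideal}; I interpret $\fa$ as a $\fq \times \fq$-equivariant ideal, since otherwise the statement is already false (for instance, $(y_{1,1}) \subsetneq \fm$). Because $A/\fm \cong \bC$, the ideal $\fm$ is maximal, and the desired equality $\fa+\fm=A$ is equivalent to $\fa \not\subset \fm$; it therefore suffices to exhibit an element of $\fa$ that does not vanish modulo $\fm$, i.e., does not vanish at the standard form on $\bV$.

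Since $\fa$ is nonzero and equivariant, Theorem~\ref{thm:ideal-structure} gives $I^\lambda \subseteq \fa$ for some strict partition $\lambda$. Pick $r \in \bN$ with $r+1 \geq \lambda_1$ and let $\rho_r := (r+1, r, \ldots, 2, 1)$ be the staircase partition. Strictness of $\lambda$ forces $\lambda_i \leq \lambda_1 - (i-1) \leq r+2-i = \rho_{r,i}$, so $\rho_r \supseteq \lambda$; by Theorem~\ref{thm:ideal-structure} this yields $I^{\rho_r} \subseteq I^\lambda \subseteq \fa$. Remark~\ref{rmk:detideal} identifies $I^{\rho_r}$ with the queer determinantal ideal $I_r$, whose zero locus in $\Spec A(\bC^{n|n}, \bC^{n|n})$ is precisely the variety of queer-linear maps $\bC^{n|n} \to \bC^{n|n}$ of rank $\leq r$. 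So it suffices to show $I_r \not\subset \fm$.

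For this last step, set $n = r+1$. The functorial inclusion $\bC^{n|n} \hookrightarrow \bV$ induces a superalgebra injection $A(\bC^{n|n}, \bC^{n|n}) \hookrightarrow A$ which sends $I_r(\bC^{n|n}, \bC^{n|n})$ into $I_r$, and the augmentation $A \to A/\fm = \bC$ restricts to evaluation at the identity map $\bC^{n|n} \to \bC^{n|n}$. Since the identity has rank $n = r+1 > r$, Remark~\ref{rmk:detideal} guarantees that it lies outside $V(I_r(\bC^{n|n}, \bC^{n|n}))$; hence some $f \in I_r(\bC^{n|n}, \bC^{n|n})$ is nonzero at the identity, and its image in $A$ is an element of $\fa$ outside $\fm$. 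The main obstacle is exactly the geometric identification supplied by Remark~\ref{rmk:detideal} --- which in turn rests on $\bT_\lambda(\bC^{r|r}) = 0$ for $\ell(\lambda) > r$; without this bridge one would instead need to verify directly that the image of $T_\lambda \otimes T_\lambda$ under the augmentation $A \to A/\fm$ is nontrivial, a less conceptual calculation in queer Schur functors.
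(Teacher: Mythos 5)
Your argument is correct, and there is no circularity (Theorem~\ref{thm:ideal-structure} and Remark~\ref{rmk:detideal} both precede the lemma and do not depend on it), but it takes a different route from the paper. You deduce $I^{\lambda}\subseteq\fa$ from multiplicity-freeness, pass to the staircase ideal $I^{\rho_r}=I_r\subseteq\fa$, and then use the determinantal description of $V(I_r(\bC^{n|n},\bC^{n|n}))$ to see that $I_r$ cannot vanish at the full-rank point cut out by $\fm$; maximality of $\fm$ then gives $\fa+\fm=A$. The paper instead avoids the classification theorem entirely: it intersects $\fa$ and $\fm$ with a finite truncation $A_n$, notes that $V(\fa\cap A_n)$ is a proper closed $\fq(n)\times\fq(n)$-stable subset of the space of twisted bilinear forms, and uses that the $\fgl(n)\times\fgl(n)$-orbit of a maximal-rank form is dense, so this subset misses the point of $\fm\cap A_n$. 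Your approach buys an explicit containment $I_r\subseteq\fa$ and leans on already-established structural results; the paper's is more elementary and self-contained, needing only the dense-orbit fact rather than Theorem~\ref{thm:ideal-structure} or Remark~\ref{rmk:detideal}. One small point of care: whether the point of $\fm$ is literally ``the identity map'' depends on the identification of $\Spec A(\bC^{n|n},\bC^{n|n})$ with maps rather than forms chosen in Remark~\ref{rmk:detideal}; what matters, and what the paper also asserts, is only that this point has maximal rank $n=r+1>r$, so your conclusion stands.
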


\begin{proof}
  Suppose $\fa$ is a non-zero ideal of $A$. Let $V_n$ and $W_n$ be copies of $\bC^{n|n}$ and let $A_n=\Sym(2^{-1}(V_n \otimes W_n))$, regarded as a subring of $\vert A \vert$. Then $A$ is the union of the $A_n$, and so for $n \gg 0$, $\fa'=\fa \cap A_n$ is a non-zero $\fq(n)\times \fq(n)$-stable ideal of $A_n$, and $\fm'=\fm \cap A_n$ is a maximal ideal of $A_n$. By Remark~\ref{rmk:SpecA}, $\Spec(A_n)$ is a space of certain bilinear forms on $V_n$, and $\fm' \in \Spec(A_n)$ has maximal rank. Since every ideal contains non-zero even degree elements, $V(\fa')$ is a proper closed $\fq(n) \times \fq(n)$-stable subset of $\Spec(A_n)$. Hence, it cannot contain any form of maximal rank (as the orbit of any such form is dense under the action of $\fgl(n) \times \fgl(n)= (\fq(n) \times \fq(n))_0$), and so $\fa' \not\subset \fm'$. It follows that $\fa \not\subset \fm$, and so $\fa+\fm=A$.
\end{proof}

\subsection{The group $K$}

Let $B$ be the supergroup consisting of all infinite matrices of the form
\begin{displaymath}
\begin{pmatrix} a & b \\ -b & a \end{pmatrix}
\end{displaymath}
with $a$ and $b$ upper triangular, $a$ even, $b$ odd, and $a$ invertible; we do \emph{not} require $a$ or $b$ to fix almost all basis vectors. This is naturally a group object in the category of super schemes. The coordinate ring $\bC[B]$ is the superalgebra $\bC[a_{i,j},b_{i,j},a_{i,i}^{-1}]_{j \ge i}$ where the $a$'s are even and the $b$'s are odd. If we multiply two matrices in $B$ then each entry of the result involves only finitely many entries of the original two matrices; the formulas for the entries of the result are used to define the comultiplication map $\bC[B] \to \bC[B] \otimes \bC[B]$. We let $U$ be the subgroup of $B$ where $a_{i,i}=1$ and $b_{i,i}=0$ for all $i$, and we let $K=B \times U$. We use coordinates $a_{i,j}$, $b_{i,j}$, $a'_{i,j}$, $b'_{i,j}$ on $K$, where the first pair are coordinates on $B$ the second on $U$. The Lie algebra of $B$ is essentially the algebra $\fb$ defined in the previous section, except its elements can have infinitely many non-zero entries. Similarly, the Lie algebra of $U$ is essentially $\fn$, and that of $K$ is essentially $\fk$.

If $V$ is a polynomial representation of $\fq$ then its restriction to $\fb$ naturally extends to a representation of $B$ (i.e., it admits a comodule structure over $\bC[B]$). Similarly, if $V$ is a polynomial representation of $\fq \times \fq$ then its restriction to $\fk$ naturally extends to a representation of $K$.

\subsection{The map $\phi$}

Let $M$ be an $A$-module. We then have the comultiplication map $M \to M \otimes \bC[K]$ as discussed above. Composing with the quotient map $M \to M/\fm M$, we obtain a linear map
\begin{displaymath}
\phi_M \colon M \to M/\fm M \otimes \bC[K].
\end{displaymath}
In fact, we can define $\phi_M$ for any $K$-equivariant $\vert A \vert$-module (where ``$K$-equivariant'' means the action of $K$ comes from a comodule structure). We now study this map, beginning with the case $M=A$:

\begin{proposition}
We have the following:
\begin{enumerate}
\item The map $\phi_A \colon A \to \bC[K]$ is the $\bC$-algebra homomorphism given by
\begin{displaymath}
\phi(x_{i,j}) = \sum_{k \le i,j} a_{k,i} a'_{k,j} + \zeta b_{k,i} b'_{k,j}, \qquad
\phi(y_{i,j}) = \sum_{k \le i,j} a_{k,i} b'_{k,j} - \zeta b_{k,i} a'_{k,j}
\end{displaymath}
\item The extension of $\fm$ along the map $\phi_A$ is the maximal ideal of $\bC[K]$ corresponding to the identity element of $K$.
\item The map $\phi_A$ induces an isomorphism of localizations $A_{\fm} \to \bC[K]_{\fm}$.
\end{enumerate}
\end{proposition}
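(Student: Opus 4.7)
The map $\phi_A$ is a homomorphism of $\bC$-algebras, being the composition of the $K$-coaction $\Delta_A \colon A \to A \otimes \bC[K]$ (an algebra map since $A$ is a $\bC[K]$-comodule algebra) with the quotient $A \to A/\fm = \bC$ (an algebra map since $\fm$ is a maximal ideal by Lemma~\ref{lem:unit}). For (a), I therefore only need to compute $\phi_A$ on the algebra generators $x_{ij}, y_{ij}$ of $A = \Sym(\bU)$. Reading the $B$- and $U$-coactions on $\bV$ off the matrix form $\begin{pmatrix}a & b \\ -b & a\end{pmatrix}$ and taking their super tensor product gives the coaction on $\bV \otimes \bV$; restricting to the $\zeta$-eigenspace $\bU$ and expanding in the basis $\{v_{k\ell}, w_{k\ell}\}$ expresses $\Delta_A(x_{ij})$ and $\Delta_A(y_{ij})$ in terms of $x_{k\ell}, y_{k\ell}$ with coefficients in $\bC[K]$. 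The substitutions $x_{k\ell} \mapsto \delta_{k\ell}$ and $y_{k\ell} \mapsto 0$ then produce the stated formulas. The main subtlety is keeping super signs straight in the tensor coaction.

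For (b), the containment $\phi_A(\fm) \subset \fm_e$ (where $\fm_e$ denotes the maximal ideal of $\bC[K]$ at identity) is immediate from the formulas in (a): evaluating at $e$ (all $b,b'$ vanish; $a_{ii} = a'_{ii} = 1$; other $a,a'$ vanish) gives $\phi_A(x_{ij})(e) = \delta_{ij}$ and $\phi_A(y_{ij})(e) = 0$. For the reverse, I expand the images on generators modulo $\fm_e^2$. A direct computation yields $\phi_A(x_{ii} - 1) \equiv a_{ii} - 1$; $\phi_A(x_{ij}) \equiv a'_{ij}$ for $i < j$ and $\equiv a_{ji}$ for $i > j$; and analogous formulas producing scalar multiples of $b_{ii}$, $b'_{ij}$, or $b_{ji}$ from $\phi_A(y_{ij})$. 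These images constitute (up to scalars) a full generating set of $\fm_e$, so the induced map $\fm/\fm^2 \to \fm_e/\fm_e^2$ is a bijection. An inductive lift---solving first for the diagonal coordinates $a_{ii} - 1, b_{ii}$, and then successively for off-diagonal coordinates using that each $a_{ii}$ is a unit in $\bC[K]$---upgrades this to the ideal-level equality $\phi_A(\fm) \cdot \bC[K] = \fm_e$. (One may recognize this cotangent isomorphism as a direct consequence of the decomposition $\fq \times \fq = \fh \oplus \fk$ established earlier: $\fk$ projects isomorphically onto the normal direction to the orbit through $\fm$.)

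For (c), I pass to the finite-variable truncations $\phi_A^{(n)} \colon A_n \to \bC[K_n]$, where $A_n = \Sym(\bU_n)$ with $\bU_n = 2^{-1}(\bC^{n|n} \otimes \bC^{n|n})$ and $K_n$ is the corresponding finite-dimensional subgroup. Both $(A_n)_{\fm_n}$ and $\bC[K_n]_{\fm_{e,n}}$ are regular local super rings (as localizations of super polynomial algebras at closed rational points), and by (b) their cotangent spaces have equal dimension. From (b), the fiber $\bC[K_n]_{\fm_{e,n}}/\fm_n \cdot \bC[K_n]_{\fm_{e,n}}$ is the residue field $\bC$, and Nakayama's lemma then shows $\phi_A^{(n)}$ is surjective on these localizations. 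A surjection of regular local rings of equal dimension is an isomorphism, so each localized truncation is an isomorphism. Taking the colimit over $n$ yields the required iso $A_\fm \to \bC[K]_\fm$.

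The main obstacle is the super-sign bookkeeping in part (a); parts (b) and (c) are then relatively standard local-ring arguments.
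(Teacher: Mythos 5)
Your parts (a) and (b) are essentially the paper's argument: (a) is the same coaction computation followed by reduction modulo $\fm$, and the ``inductive lift'' you sketch in (b) is exactly the paper's lexicographic induction (note that the cotangent-space observation alone would not suffice here, since $\fm_e$ is not finitely generated and $\bC[K]$ is not local, so Nakayama cannot convert $\phi_A(\fm)+\fm_e^2=\fm_e$ into $\fm^e=\fm_e$; it is the coordinate-by-coordinate induction that does the work, and it does not even need the units $a_{ii}^{-1}$ since $a_{ii}\equiv 1$ modulo $\fm^e$ after the diagonal step).

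Part (c), however, has a genuine gap. You claim that because the fiber $\bC[K_n]_{\fm_{e,n}}/\fm_n\bC[K_n]_{\fm_{e,n}}$ is the residue field, ``Nakayama's lemma then shows $\phi_A^{(n)}$ is surjective on these localizations.'' Nakayama gives surjectivity of a local homomorphism only when the target is a \emph{finite} module over the source, which is not known here (and is essentially what is being proved). The hypotheses you actually use --- a local map of regular local rings of equal dimension, trivial residue field extension, and $\fm_R S=\fm_S$ --- are all satisfied by the non-surjective inclusion $\bC[x]_{(x)}\hookrightarrow \bC[[x]]$, so this reasoning pattern cannot establish surjectivity. (One could try to repair it via completions plus Zariski's main theorem, or by checking $S$ is module-finite over $R$, but neither is done, and the ``surjection of equal-dimensional regular local rings is injective'' step would also need a super-ring adaptation since the odd generators are nilpotent and the rings are not domains.) The paper avoids this entirely: it proves (c) by explicitly constructing an inverse $\psi\colon\bC[K]\to A_{\fm}$, solving recursively (in the same lexicographic order as (b)) for each coordinate $a_{i,j},b_{i,j},a'_{i,j},b'_{i,j}$ as an element of $A_{\fm}$, using the invertibility modulo $\fm A_{\fm}$ of the $2\times 2$ matrix built from $\psi(a_{i,i}),\psi(b_{i,i})$, and then checking $\wt\psi\circ\wt\phi$ and $\wt\phi\circ\wt\psi$ are the identity. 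To salvage your truncation approach you would have to show that each coordinate function of $K_n$ lies in the \emph{image} of $(A_n)_{\fm_n}$ (not merely in the extended ideal, which is what (b) gives), and that is precisely the recursive solution the paper writes down.
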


\begin{proof}
(a) Let $g$ be an element of $K$ with coordinates $a_{i,j}$, $b_{i,j}$, $a'_{i,j}$, $b'_{i,j}$. We have
\begin{align*}
g (e_i \otimes e_j) &= \left( \sum_{k \le i} a_{k,i} e_k - b_{k,i} f_k \right) \otimes \left( \sum_{\ell \le j} a'_{\ell,j} e_\ell - b'_{\ell,j} f_\ell \right) \\
g (e_i \otimes f_j) &= \left( \sum_{k \le i} a_{k,i} e_k - b_{k,i} f_k \right) \otimes \left( \sum_{\ell \le j} b'_{\ell,j} e_\ell + a'_{\ell,j} f_\ell \right) \\
g (f_i \otimes e_j) &= \left( \sum_{k \le i} b_{k,i} e_k + a_{k,i} f_k \right) \otimes \left( \sum_{\ell \le j} a'_{\ell,j} e_\ell + b'_{\ell,j} f_\ell \right) \\
g (f_i \otimes f_j) &= \left( \sum_{k \le i} b_{k,i} e_k + a_{k,i} f_k \right) \otimes \left( \sum_{\ell \le j} -b'_{\ell,j} e_\ell + a'_{\ell,j} f_\ell \right) \\
\end{align*}
We thus find
\begin{align*}
gv_{i,j} &= \sum_{k \le i, \ell \le j} (a_{k,i} a'_{\ell,j}+\zeta b_{k,i} b'_{\ell,j}) v_{k,\ell}-(a_{k,i} b'_{\ell,j}+\zeta b_{k,i} a'_{\ell,j}) w_{k,\ell} \\
gw_{i,j} &= \sum_{k \le i, \ell \le j} (a_{k,i} b'_{\ell,j}-\zeta b_{k,i} a'_{\ell,j}) v_{k,\ell}+(a_{k,i} a'_{\ell,j}-\zeta b_{k,i} b'_{\ell,j}) w_{k,\ell}
\end{align*}
from which the stated formulas for $\phi(x_{i,j})$ and $\phi(y_{i,j})$ follow (change $v_{i,j}$ to $x_{i,j}$ and $w_{i,j}$ to $y_{i,j}$, and reduce modulo $\fm$, which takes $x_{i,j}$ to $\delta_{i,j}$ and $y_{i,j}$ to 0). Since $K$ acts on $A$ by algebra homomorphisms, it follows that $\phi$ is an algebra homomorphism.

(b) It suffices to show that the elements $a_{i,j} - \delta_{i,j}, b_{i,j}, a'_{i,j}$ and $b'_{i,j}$ are in the extension $\fm^e$ of $\fm$. We do this lexicographically on the pair $(i, j)$ (with $j$ being more significant). The base case $(1,1)$ follows from: \begin{align*}
\phi(x_{1,1} - 1) &= a_{1,1} - 1\\
\phi(y_{1,1}) & = -\zeta b_{1,1}.
\end{align*} Suppose the result hold for all pairs smaller than $(i,i)$. Then we have the following:
\begin{align*}
\phi(x_{i,i} - 1) &= a_{i,i} - 1 \pmod{\fm^e} \\
\phi(y_{i,i}) & = -\zeta b_{i,i} \pmod{\fm^e}, 
\end{align*}  from which it follows that $a_{i,i} - 1, b_{i,i} \in \fm^e$. Next suppose $i < j$ and assume, by induction, that the result holds for all pairs smaller than $(i,j)$. Then we have the following: 
\begin{align*}
\phi(x_{i,j}) &= a'_{i,j}  \pmod{\fm^e} \\
\phi(y_{i,j}) & = b'_{i,j} \pmod{\fm^e} \\
\phi(x_{j,i}) &= a_{i,j} \pmod{\fm^e}\\
\phi(y_{j,i}) &= -\zeta b_{i,j} \pmod{\fm^e}.
\end{align*} 
It follows that $a'_{i,j}, b'_{i,j}, a_{i,j}, b_{i,j} \in \fm^e$, which proves (b).

(c) We now define an algebra morphism $\psi \colon \bC[K] \to A_{\fm}$ such that the kernel of the composition $\bC[K] \to A_{\fm} \to A_{\fm}/\fm A_{\fm} $ is the ideal $\fm^e$. We define $\psi$ lexicographically on the generators as follows. In the base case, we set \begin{align*}
\psi(a_{1,1}) &= x_{1,1}\\
\psi(b_{1,1}) &= \zeta y_{1,1}. 
\end{align*} Suppose $i \le j$ and assume, by induction, that $\psi$ has been defined on all generators indexed smaller than $(i,j)$. We set \begin{align*}
\psi(a_{i,j}) &= x_{j,i} - \sum_{k<i} \psi( a_{k,j}a'_{k,i} +\zeta b_{k, j} b'_{k,i} )   \\
\psi(b_{i,j}) &= \zeta y_{j,i}  - \zeta \sum_{k<i} \psi(a_{k,j}b'_{k,i} - \zeta b_{k,j} a'_{k,i}).
\end{align*} Moreover, if $i < j$ we set  \begin{align*}
\vec{\psi(a'_{i,j})}{\psi(b'_{i,j})} = \mat{\psi(a_{i,i})}{\zeta \psi( b_{i,i})}{-\zeta \psi(b_{i,i})}{\psi(a_{i,i})}^{-1}\vec{x_{i,j} - \sum_{k <i}\psi(a_{k,i}a'_{k,j} + \zeta b_{k,i} b'_{k,j}  ) }{y_{i,j} - \sum_{k <i}\psi(a_{k,i}b'_{k,j} -\zeta b_{k,i}a'_{k,j} ) }.
\end{align*}  One can check lexicographically that the kernel of the composition $\bC[K] \to A_{\fm} \to A_{\fm}/\fm A_{\fm} $ contains $\fm^e$, and so it must equal $\fm^e$. In particular, the determinant of the matrix above is $1 \pmod{\fm A_{\fm} }$, and so $\psi$ is well-defined. Since $\psi^{-1}(\fm A_{\fm} ) = \fm^e$, it follows that $\psi$ extends to a homomorphism $\wt{\psi} \colon \bC[K]_{\fm^e} \to A_{\fm}$. Let  $ \wt{\phi} \colon  A_{\fm} \to  \bC[K]_{\fm^e}$ be the map induced by $\phi$.  By the construction of $\wt{\psi}$, it is clear that $\wt{\psi} \circ \wt{\phi}$ is the identity map. Moreover, arguing lexicographically, one can check that $\wt{\phi} \circ \wt{\psi}$ is also the identity map. Thus $\wt{\phi}$ is an isomorphism.
\end{proof}

In what follows, we regard $\bC[K]$ as an $A$-module via $\phi_A$. We now study the map $\phi_M$ for an arbitrary $A$-module $M$. Since $K$ acts on $M$ by $A$-semilinear automorphisms, it follows that $\phi_M$ is a morphism of $A$-modules. Our goal is to prove that $\phi_M$ induces an isomorphism after localizing at $\fm$, which will establish the theorem. We require some lemmas first.

\begin{lemma} \label{lem:phi-modm}
Let $M$ be a $K$-equivariant $\vert A \vert$-module. Then $\phi_M$ induces an isomorphism modulo $\fm$.
\end{lemma}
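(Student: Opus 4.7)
The plan is to verify that, under appropriate identifications, the reduction of $\phi_M$ modulo $\fm$ is literally the identity map on $M/\fm M$, which will make the result a direct consequence of the counit axiom for the coaction of $K$ on $M$.

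First I would check that $\phi_M$ is $A$-linear with respect to the structure on $M/\fm M \otimes \bC[K]$ in which $A$ acts only on the $\bC[K]$ factor via $\phi_A$. Writing the coactions in Sweedler notation as $\Delta_A(a) = \sum a_j \otimes g_j$ and $\Delta_M(m) = \sum m_i \otimes f_i$, the $K$-equivariance of the $A$-action on $M$ gives $\Delta_M(am) = \sum a_j m_i \otimes g_j f_i$. Applying $q \otimes \mathrm{id}$ and using that the scalars $\overline{a_j} \in A/\fm = \bC$ can be moved freely across the tensor, this rewrites as $\phi_M(am) = \sum \overline{m_i} \otimes \phi_A(a) f_i$, which is exactly $a \cdot \phi_M(m)$ for the action just described.

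Next, by part (b) of the preceding proposition, $\phi_A(\fm)\bC[K]$ is the maximal ideal of $\bC[K]$ at the identity $e \in K$, and so $\bC[K]/\phi_A(\fm)\bC[K] \cong \bC$ via the counit $\epsilon$. Base-changing $\phi_M$ along $A \to A/\fm = \bC$ therefore yields a $\bC$-linear map $\bar\phi_M \colon M/\fm M \to M/\fm M$ given by the composition $(q \otimes \epsilon) \circ \Delta_M$. The counit axiom $(\mathrm{id}_M \otimes \epsilon) \circ \Delta_M = \mathrm{id}_M$ then immediately reduces this to the quotient $q$, which descends to the identity on $M/\fm M$. The only real obstacle is bookkeeping—keeping track of the two distinct $A$-module structures on the target and ensuring that the base change is taken with respect to the correct one; once the $A$-linearity and the identification $\bC[K]/\fm^e \cong \bC$ via the counit are in place, the conclusion is a one-line application of the counit axiom.
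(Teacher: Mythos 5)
Your proposal is correct and is essentially the argument the paper invokes: the paper simply cites \cite[Lemma~4.4]{symsp1}, whose proof is exactly this reduction — use part (b) of the preceding proposition to identify $\bC[K]/\fm^e \cong \bC$ via the counit, note $\phi_M$ is $A$-linear for the $\phi_A$-action on the $\bC[K]$ factor, and conclude from the counit axiom that the reduction of $\phi_M$ modulo $\fm$ is the identity on $M/\fm M$. The only caveat is the routine superalgebra bookkeeping (Koszul signs in $\Delta_M(am)=\Delta_A(a)\Delta_M(m)$), which you flag and which does not affect the conclusion.
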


\begin{proof}
See \cite[Lemma~4.4]{symsp1}.
\end{proof}

\begin{lemma} \label{lem:phi-locm}
Let $M$ be an $A$-module. Then the localization of $\phi_M$ at $\fm$ is surjective.
\end{lemma}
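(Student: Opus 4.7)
The plan is to reduce to the case $M = A \otimes V$ for a finite-dimensional polynomial representation $V$ of $\fq \times \fq$, and then invoke Nakayama's lemma.

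For the reduction, every $A$-module $M$ is a quotient of $A \otimes W$ for some polynomial representation $W$ of $\fq \times \fq$, and since $\Rep^{\pol}(\fq \times \fq)^{\circ}$ is semisimple, $W$ decomposes as a direct sum of finite-dimensional simples. The assignment $M \mapsto \phi_M$ is functorial in $M$, and surjectivity of the localization $(\phi_M)_{\fm}$ is preserved by both direct sums and quotients. Indeed, for a surjection $M \to M'$, the induced map $M/\fm M \otimes \bC[K] \to M'/\fm M' \otimes \bC[K]$ is surjective, so in the obvious naturality square and after localizing at $\fm$, a quick diagram chase shows that surjectivity of $(\phi_M)_{\fm}$ implies that of $(\phi_{M'})_{\fm}$. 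Direct sums are handled similarly. It therefore suffices to treat $M = A \otimes V$ with $V$ finite-dimensional.

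For such $M$, a direct calculation using the diagonal $K$-coaction on $A \otimes V$ yields the formula
\[
\phi_M(a \otimes v) = \phi_A(a) \cdot \rho_V(v),
\]
where $\rho_V \colon V \to V \otimes \bC[K]$ is the $K$-coaction on $V$ and $\phi_A(a) \cdot -$ denotes the $\bC[K]$-module structure on $V \otimes \bC[K]$ (acting on the $\bC[K]$ tensor factor). Using the identification $A_{\fm} \cong \bC[K]_{\fm}$ from part (c) of the previous proposition, the image of $(\phi_M)_{\fm}$ in $V \otimes A_{\fm}$ is precisely the $A_{\fm}$-submodule generated by the set $\{\rho_V(v) : v \in V\}$.

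To finish, observe that $\fm$ is a maximal ideal of $A$ (as $A/\fm \cong \bC$), so $A_{\fm}$ is a local ring with residue field $\bC$, and $V \otimes A_{\fm}$ is a free $A_{\fm}$-module of finite rank. Since the $K$-coaction evaluated at the identity $e \in K$ is the identity map on $V$, we have $\rho_V(v) \equiv v \otimes 1 \pmod{V \otimes \fm A_{\fm}}$, so the elements $\rho_V(v)$ reduce to a $\bC$-basis of $V \otimes A_{\fm}/\fm A_{\fm} \cong V$. By Nakayama's lemma, they generate $V \otimes A_{\fm}$ as an $A_{\fm}$-module, yielding the desired surjectivity. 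The main obstacle is the verification of the formula $\phi_M(a \otimes v) = \phi_A(a) \cdot \rho_V(v)$, which requires carefully unpacking the diagonal $K$-coaction on the tensor product $A \otimes V$ and its interaction with the quotient map $A \otimes V \to V$ that sends $A$ to $A/\fm = \bC$.
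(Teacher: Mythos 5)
Your overall skeleton (reduce to $M=A\otimes V$, identify the image of $(\phi_M)_{\fm}$ with the $A_{\fm}$-submodule generated by $\rho_V(V)$, use part (c) to identify $A_{\fm}\cong\bC[K]_{\fm}$) is sound, but the final step has a genuine gap. Your reduction claims that $W$ decomposes into \emph{finite-dimensional} simples; this is false here. The algebra is the infinite queer superalgebra, and every nonzero simple polynomial representation $T_{\lambda}\otimes T_{\mu}$ of $\fq\times\fq$ is infinite-dimensional (already $\bU=2^{-1}(\bV\otimes\bV)$, i.e.\ the $\lambda=\mu=(1)$ case, is infinite-dimensional). So after your reduction $V$ is only of finite length, $V\otimes A_{\fm}$ is free of \emph{infinite} rank, and the Nakayama step collapses: for a non-finitely-generated module $F$ over a local ring, a submodule $N$ with $N+\fm F=F$ need not equal $F$ (e.g.\ over $\bC[t]_{(t)}$, take $F$ free on $e_1,e_2,\dots$ and $N$ spanned by the $e_i-te_{i+1}$; then $N+tF=F$ but $e_1\notin N$). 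Since your argument uses exactly this kind of statement for a free module whose rank you cannot make finite, surjectivity does not follow as written.

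The argument can be repaired in two ways. (i) Note that $\phi$ is defined for all $K$-equivariant $\vert A\vert$-modules and commutes with filtered colimits; since any $\bC[K]$-comodule is the union of its finite-dimensional subcomodules, you may write $V=\bigcup V'$ with $V'$ finite-dimensional $K$-subcomodules and run your Nakayama argument on each $A\otimes V'$ (this abandons $\fq\times\fq$-equivariance of $V'$, which is harmless for this lemma). (ii) Better, avoid Nakayama entirely: $\bC[K]$ is a Hopf superalgebra, and the map $V\otimes\bC[K]\to V\otimes\bC[K]$, $v\otimes f\mapsto \rho_V(v)f$, is bijective (its inverse is $v\otimes f\mapsto \sum v_{(0)}\otimes S(v_{(1)})f$ with $S$ the antipode), so $\rho_V(V)$ generates $V\otimes\bC[K]$ over $\bC[K]$ even before localizing; part (c) then turns $\bC[K]_{\fm}$-generation into $A_{\fm}$-generation. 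For comparison, the paper itself gives no new argument and simply cites the analogous statement \cite[Lemma~4.5]{symsp1}, so your strategy is a legitimate alternative once the finite-dimensionality issue is addressed.
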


\begin{proof}
See \cite[Lemma~4.5]{symsp1}.
\end{proof}

\begin{lemma} \label{lem:ker}
Let $M$ be an $A$-module. Then the kernel of $\phi_M$ is $\fq \times \fq$-stable, and thus an $A$-submodule of $M$.
\end{lemma}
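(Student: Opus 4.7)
The plan is to prove stability of $\ker(\phi_M)$ separately under each summand of the decomposition $\fq \times \fq = \fh \oplus \fk$. For $\fk$-stability, I would use the interpretation of the underlying coaction $\phi'_M \colon M \to M \otimes \bC[K]$ as the polynomial function $K \to M$, $g \mapsto gm$. Infinitesimal equivariance for left translation then gives $\phi'_M(xm) = (1 \otimes L_x)\phi'_M(m)$ for $x \in \fk$, where $L_x$ is the left-invariant derivation on $\bC[K]$ dual to $x$. Post-composing with $q \colon M \to M/\fm M$ yields $\phi_M(xm) = (1 \otimes L_x)\phi_M(m)$, whence $\fk$-stability of $\ker(\phi_M)$.

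Next, I would establish the characterization
\begin{displaymath}
\ker(\phi_M) = \{\, m \in M \mid y m \in \fm M \text{ for all } y \in \mathcal{U}(\fk)\,\}.
\end{displaymath}
The pairing of $\mathcal{U}(\fk)$ with $\bC[K]$ by iterated left-invariant differentiation at the identity recovers the formal Taylor expansion at $e$, and a direct induction using the formula above gives $(1 \otimes \partial_y|_e)\phi_M(m) = [ym]$ in $M/\fm M$ for all $y \in \mathcal{U}(\fk)$. Since $\bC[K]$ is a super polynomial algebra (with certain even generators inverted), an element of $M/\fm M \otimes \bC[K]$ is zero iff all of its Taylor coefficients at $e$ vanish, which translates to the asserted characterization.

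With the characterization in hand, $\fh$-stability becomes a short application of PBW. For $h \in \fh$ and $m \in \ker(\phi_M)$, to verify $\phi_M(hm) = 0$ it suffices to show $y(hm) \in \fm M$ for every $y \in \mathcal{U}(\fk)$. Using the PBW decomposition $\mathcal{U}(\fq \times \fq) = \mathcal{U}(\fh) \cdot \mathcal{U}(\fk)$ induced by $\fq \times \fq = \fh \oplus \fk$, I would write $yh = \sum_i h_i y_i$ with $h_i \in \mathcal{U}(\fh)$ and $y_i \in \mathcal{U}(\fk)$. Then $y(hm) = (yh)m = \sum_i h_i(y_i m)$, and each $y_i m$ lies in $\fm M$ by the characterization applied to $m$, while each $h_i$ preserves $\fm M$ because $\fh$ preserves $\fm$ (by the earlier proposition of this section). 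The sum therefore lies in $\fm M$, giving $\fh$-stability.

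The main obstacle is the characterization step: it rests on the standard fact that a polynomial function on the smooth connected algebraic supergroup $K$ is determined by its formal Taylor series at the identity, equivalently by its pairings with $\mathcal{U}(\fk)$. Once this is in place, the remaining PBW manipulation is essentially formal, and the $A$-submodule claim follows from the $A$-linearity of $\phi_M$ already noted.
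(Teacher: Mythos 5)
Your argument is correct and is essentially the paper's own proof, which is given by reference: the paper invokes \cite[Lemma~4.8]{symsp1} together with the superalgebra generalization of \cite[Lemma~4.7]{symsp1}, and that argument is exactly what you spell out — $\fk$-stability from equivariance of the coaction, the characterization of $\ker(\phi_M)$ via Taylor coefficients at the identity paired against $\mathcal{U}(\fk)$ (this Taylor-determination statement for the super polynomial algebra $\bC[K]$ is precisely the cited Lemma~4.7 in its super form), and then $\fh$-stability via PBW for $\fq \times \fq = \fh \oplus \fk$ and the $\fh$-stability of $\fm$.
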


\begin{proof}
This is proved exactly like \cite[Lemma~4.8]{symsp1}, using an obvious generalization of \cite[Lemma~4.7]{symsp1} to superalgebras.
\end{proof}

By an ``polynomially $\fh$-equivariant $A_{\fm}$-module,'' we mean an $A_{\fm}$-module $N$ equipped with a compatible action of $\fh$ such that for every $x \in N$ there is a unit $u \in A_{\fm}$ such that $ux$ generates a polynomial $\fh$-representation (recall that $\fh \cong \fq$). The localization of any $A$-module at $\fm$ is a polynomially $\fh$-equivariant $A_{\fm}$-module. Any submodule or quotient module of a polynomially $\fh$-equivariant $A_{\fm}$-module (in the category of equivariant modules) is again polynomially equivariant.

\begin{lemma} \label{lem:eqfg}
Suppose that
\begin{displaymath}
0 \to R \to M \to N \to 0
\end{displaymath}
is an exact sequence of polynomially $\fh$-equivariant $A_{\fm}$-modules such that $M$ is equivariantly finitely generated and $N$ is free as an $\vert A_{\fm} \vert$-module. Then $R$ is also equivariantly finitely generated.
\end{lemma}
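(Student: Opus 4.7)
The plan is to produce a finite length polynomial $\fh$-subrepresentation $W$ of $R$ such that $W \cdot A_\fm = R$, which would exhibit $R$ as equivariantly finitely generated. The three main ingredients are: the flatness of $N$ to reduce modulo $\fm$, the semisimplicity of $\Rep^{\pol}(\fh)^\circ$ to lift finite length subobjects, and an equivariant version of Nakayama's lemma at the end.

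First, since $N$ is $\vert A_\fm \vert$-flat (being free), applying $-\otimes_{A_\fm} A_\fm/\fm A_\fm$ to the given short exact sequence yields
\[
0 \to R/\fm R \to M/\fm M \to N/\fm N \to 0,
\]
an exact sequence of $\fh$-equivariant $\bC$-vector spaces. Writing $M = V \cdot A_\fm$ for a finite length polynomial $\fh$-representation $V \subset M$, the composite $V \hookrightarrow M \twoheadrightarrow M/\fm M$ is surjective, so $M/\fm M$ is a finite length polynomial $\fh$-representation, and hence so is the $\fh$-subobject $R/\fm R$.

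Next I would lift generators of $R/\fm R$ back into $R$. By semisimplicity of $\Rep^{\pol}(\fh)^\circ$, decompose $R/\fm R$ into a finite direct sum of simple polynomial $\fh$-representations, pick a cyclic generator $\bar x_j$ for each summand, and lift to $x_j \in R$. Since $R \subset M$ is polynomially $\fh$-equivariant, for each $j$ there is a unit $u_j \in A_\fm$ such that the $\fh$-subrepresentation $W_j$ generated by $u_j x_j$ is a polynomial $\fh$-representation (necessarily finite length, being cyclic); note $W_j \subset R$ because $R$ is both $A_\fm$- and $\fh$-stable. Setting $W := \sum_j W_j$, a finite length polynomial $\fh$-subrepresentation of $R$, its image in $R/\fm R$ contains each $u_j x_j \bmod \fm R$, which is a nonzero scalar multiple of $\bar x_j$ (as $u_j$ reduces to a nonzero scalar in $A_\fm/\fm A_\fm = \bC$); hence $W$ surjects onto $R/\fm R$.

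Setting $R' := W \cdot A_\fm \subset R$, this gives $R = R' + \fm R$, so the quotient $Q := R/R'$ satisfies $\fm Q = Q$. The main obstacle will be to conclude $Q = 0$ via an equivariant Nakayama-type argument: for any $q \in Q$, polynomial equivariance provides a unit $u \in A_\fm$ such that $uq$ generates a finite length polynomial $\fh$-subrepresentation of $Q$, and the relation $Q = \fm Q$ iterates together with a Krull intersection-type observation (using that $Q$ embeds into the equivariantly finitely generated module $M/R'$, so that each element lives in a classically finitely generated $\vert A_\fm \vert$-submodule after evaluating at some $\bC^{n|n}$) to force $q = 0$. Once $Q = 0$, we deduce $R = R' = W \cdot A_\fm$, completing the proof.
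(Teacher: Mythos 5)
The first part of your argument is sound and matches the expected strategy: freeness of $N$ gives $\Tor_1^{A_{\fm}}(N,\bC)=0$, so $R/\fm R$ embeds into the finite length representation $M/\fm M$, and the polynomial equivariance hypothesis (adjusting lifts by units) produces a finite length polynomial $\fh$-subrepresentation $W\subset R$ with $R=A_{\fm}W+\fm R$. (The paper itself disposes of this lemma by citing \cite[Lemma~5.8]{periplectic}.)

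The genuine gap is the final step, which you yourself flag: deducing $Q=0$ from $Q=\fm Q$, where $Q=R/(A_{\fm}W)$. None of the mechanisms you gesture at is available in the generality of this lemma. The ring $\vert A_{\fm}\vert$ is not noetherian and $Q$ is not known to be finitely generated over it---finite generation of $R$ is essentially what is being proved---so neither classical Nakayama nor Krull intersection applies. The paper's Nakayama statement, Lemma~\ref{lem:nak}, concerns non-localized modules in $\Mod_A$ carrying a full polynomial $\fq\times\fq$-action, where one can reduce to finite rank by evaluation; a polynomially $\fh$-equivariant $A_{\fm}$-module is not a functor, so ``evaluating at $\bC^{n|n}$'' has no meaning here, and $Q$ is in any case not of that type. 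Moreover, even granting some finite-rank reduction, the relations witnessing $q\in\fm^kQ$ involve elements that need not lie in any fixed finitely generated $\vert A_{\fm}\vert$-submodule of $M/(A_{\fm}W)$, so restricting to the ``classically finitely generated'' piece containing $q$ does not set up a situation where Nakayama or Krull intersection can be invoked; one would need an Artin--Rees type control that you have not supplied. Note also that beyond the Tor vanishing your argument never uses that $N$ is free rather than merely flat; the stronger hypothesis (for instance, the resulting $\vert A_{\fm}\vert$-linear splitting $M\cong R\oplus N$) is the natural extra leverage, and some input of this kind is needed to close the argument. As written, the concluding step restates the difficulty rather than resolving it.
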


\begin{proof}
The argument in \cite[Lemma~5.8]{periplectic} applies here.
\end{proof}

\begin{lemma} \label{lem:nak}
Let $M$ be an $A$-module such that $M=\fm M$. Then $M_{\fm}=0$.
\end{lemma}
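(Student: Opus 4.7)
My strategy is to reduce to equivariantly finitely generated submodules of $M$ and apply an equivariant Nakayama-style argument using the $\phi$-map machinery from the preceding subsection.

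First, I reduce to showing $N_\fm = 0$ for each equivariantly finitely generated $A$-submodule $N \subseteq M$: this is possible because $M$ is the filtered union of such submodules and localization commutes with filtered unions. Fix such $N = AV$ for a finite-length polynomial subrepresentation $V$ of $M$.

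I then iteratively enlarge $N$ using the hypothesis $M = \fm M$. Each $v \in V$ admits an expression $v = \sum a_i y_i$ with $a_i \in \fm$ and $y_i \in M$; letting $V_1$ be the $\fq \times \fq$-submodule of $M$ generated by $V$ together with the finitely many $y_i$'s (still of finite length) and setting $N_1 = AV_1$, we obtain $V \subseteq \fm V_1 \subseteq \fm N_1$. Iterating produces a chain of equivariantly finitely generated $A$-submodules $N = N_0 \subseteq N_1 \subseteq \cdots$ of $M$ with $V_k \subseteq \fm V_{k+1}$. Two consequences follow: first, $(N_0)_\fm \subseteq (\fm A_\fm)^k (N_k)_\fm$ for every $k \geq 0$; second, since each $V_k$ is $K$-stable and contained in $\fm N_{k+1}$, the restriction $\phi_{N_{k+1}}|_{V_k}$ vanishes, and by $A$-linearity of $\phi_{N_{k+1}}$ via $\phi_A$ the entire $(N_k)_\fm$ maps into $(\ker \phi_{N_{k+1}})_\fm$. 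By Lemma~\ref{lem:phi-locm} applied to the short exact sequence $0 \to (\ker \phi_{N_{k+1}})_\fm \to (N_{k+1})_\fm \to (N_{k+1}/\fm N_{k+1}) \otimes A_\fm \to 0$ and then Lemma~\ref{lem:eqfg}, each kernel $(\ker \phi_{N_{k+1}})_\fm$ remains equivariantly finitely generated over $A_\fm$.

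The main obstacle is to translate these nested inclusions into the vanishing $(N_0)_\fm = 0$; this is a Krull-intersection-type step complicated by the non-Noetherianness of $A_\fm$. Here I would exploit the identification $A_\fm \cong \bC[K]_{\fm^e}$ from the preceding proposition: a Taylor-expansion argument at the identity yields $\bigcap_k (\fm A_\fm)^k = 0$ in $A_\fm$, and the completion $\widehat{A_\fm}$ is a formal power series ring. Using separatedness of the $\fm$-adic topology on the equivariantly finitely generated modules in the chain (which should follow from the $K$-equivariant structure and the control of the successive kernels $(\ker \phi_{N_k})_\fm$), one can force a finite-length generating representation of $(N_0)_\fm$ to vanish in the completion and hence in $(N_0)_\fm$ itself.
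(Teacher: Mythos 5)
Your construction of the chain $N_0\subseteq N_1\subseteq\cdots$ already has a gap. You take $V_1$ to be the $\fq\times\fq$-submodule generated by $V$ and the finitely many $y_i$, and claim $V\subseteq \fm V_1$. This does not follow: $\fm$ is stable only under $\fh$, not under $\fq\times\fq$, so knowing that finitely many generators $v=\sum a_iy_i$ lie in $\fm\{y_i\}$ does not place their whole $\fq\times\fq$-span in $\fm N_1$ (applying $g\in\fq\times\fq$ produces terms $(ga_i)y_i$ with $ga_i\notin\fm$ in general); and if instead you mean \emph{every} $v\in V$, you need infinitely many $y_i$ and lose finite length. (This particular point could be repaired by generating over $\fh$ instead, since $V\vert_{\fh}$ has finite length, but that is not what is written.) A second error: you assert that $\phi_{N_{k+1}}$ vanishes on $V_k$ because $V_k\subseteq\fm N_{k+1}$. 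Since $\phi_{N_{k+1}}$ is $A$-linear via $\phi_A$ and $\phi_A(\fm)$ lands in the maximal ideal $\fm^e$ of $\bC[K]$ at the identity (not in $0$), one only gets $\phi_{N_{k+1}}(\fm N_{k+1})\subseteq \fm^e\cdot\bigl(N_{k+1}/\fm N_{k+1}\otimes\bC[K]\bigr)$; only the composite with evaluation at the identity of $K$ kills $\fm N_{k+1}$. So the claimed inclusion of $(N_k)_\fm$ into $(\ker\phi_{N_{k+1}})_\fm$ is unjustified.

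The decisive problem, however, is the final step, which you flag yourself but do not carry out. The inclusions $(N_0)_\fm\subseteq(\fm A_\fm)^k(N_k)_\fm$ involve a \emph{growing} chain of modules, $A_\fm$ is not noetherian, and the $(N_k)_\fm$ are not finitely generated over $\vert A_\fm\vert$ in the ordinary sense, so there is no Krull intersection theorem to invoke; and the ``separatedness of the $\fm$-adic topology'' you appeal to is essentially equivalent to the statement being proved (a module with $M=\fm M$ and $M_\fm\neq 0$ is precisely a failure of such separatedness), so asserting that it ``should follow from the $K$-equivariant structure'' is circular. The missing idea is the one the paper uses: exploit equivariant finite generation by specializing to finitely many variables. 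Choosing finitely many elements of a generating finite-length subrepresentation and an $N\gg 0$ so that these elements, and the coefficients expressing them in $\fm M$, all live over $\bC^{N|N}$, one gets a finitely generated module $M'$ over the finitely generated superalgebra $A'=A(\bC^{N|N})$ with $M'=\fm'M'$ (using only the $\fq(N)\cong\fh(N)$-stability of $\fm'$), and the classical Nakayama lemma then gives $M'_{\fm'}=0$, hence $M_\fm=0$. Without some such reduction to an honest finitely generated situation, your outline does not close.
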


\begin{proof}
In what follows, $\fq$ is identified with the subalgebra $\fh \subset \fq \times \fq$. It suffices to prove the lemma when $M$ is finitely generated, so we assume this in what follows.
  
  Let $V \subset M$ be a finite length $\GL$-subrepresentation generating $M$ as an $A$-module. Pick $m_1, \ldots, m_k \in V$ such that the $m_i$ generate $V(\bC^{N\mid N})$ as a $\fq(N)$-representation for all $N \gg 0$ ($V$ is a sum of finitely many irreducible representations, which are generated by their highest weight spaces, so it suffices to pick $N$ large enough so that each of these highest weight spaces are nonzero, and pick the $m_i$ to span the highest weight spaces in $V(\bC^{N\mid N})$). Write $m_i=\sum_i a_{i,j} n_{i,j}$ where $a_{i,j} \in \fm$ and $n_{i,j} \in M$.

  Let $N \gg 0$ be large enough so that the $m_i$ and the $n_{i,j}$ belong to $M'=M(\bC^{N\mid N})$ and the $a_{i,j}$ belong to $A'=A(\bC^{N \mid N})$. Let $V'=V(\bC^{N\mid N})$ and let $\fm'$ be the ideal of $A'$ generated by $x_{i,j}-\delta_{i,j}$ and $y_{i,j}$ for $i,j \le N$. Then $M'$ is an $A'$-module and generated (ignoring any Lie superalgebra action) by $V'$. We have $m_i \in \fm' M'$ for all $i$, and so $gm_i \in \fm' M'$ for any $g \in \fq(N)$, since $\fm'$ is $\fq(N)$-stable. Thus $V' \subset \fm' M'$ and so $M'=\fm' M'$. Thus, by the usual version of Nakayama's lemma \cite[(4.22)]{lam}, we have $M'_{\fm'}=0$. Therefore, for each $1 \le i \le k$ there exists homogeneous degree $0$ elements $s_i \in A'\setminus \fm' \subset A \setminus \fm$ such that $s_im_i=0$, which implies that $M_\fm=0$.
\end{proof}

We now reach the main result:

\begin{proposition}
Let $M$ be an $A$-module. Then $\phi_M$ induces an isomorphism after localizing at $\fm$.
\end{proposition}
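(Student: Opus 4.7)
The plan is to analyze the kernel $K = \ker(\phi_M)$ and show that $K_{\fm} = 0$ via an equivariant Nakayama argument, following the scheme of Lemma~\ref{lem:nak}. Since $\phi_M$ and localization at $\fm$ both commute with filtered colimits, and every $A$-module is a filtered colimit of its equivariantly finitely generated submodules, we may first reduce to the case where $M$ is equivariantly finitely generated.

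By Lemma~\ref{lem:ker}, $K$ is an $A$-submodule of $M$, and Lemma~\ref{lem:phi-locm} gives that $(\phi_M)_{\fm}$ is surjective. Using the isomorphism $\bC[K]_{\fm} \cong A_{\fm}$ from part (c) of the preceding proposition, we obtain a short exact sequence of polynomially $\fh$-equivariant $A_{\fm}$-modules
\begin{displaymath}
0 \to K_{\fm} \to M_{\fm} \to (M/\fm M) \otimes_{\bC} A_{\fm} \to 0,
\end{displaymath}
in which the quotient is free as a $\vert A_{\fm} \vert$-module. Applying Lemma~\ref{lem:eqfg} to this sequence yields that $K_{\fm}$ is equivariantly finitely generated over $A_{\fm}$.

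Next, reducing this sequence modulo $\fm$ and invoking Lemma~\ref{lem:phi-modm} (which implies that $(\phi_M)_{\fm}$ is an isomorphism modulo $\fm$, since $A_{\fm}/\fm A_{\fm} = \bC$), we deduce $K_{\fm}/\fm K_{\fm} = 0$, i.e., $K_{\fm} = \fm K_{\fm}$. The remaining task is to conclude $K_{\fm} = 0$ from these two properties. We proceed in the spirit of Lemma~\ref{lem:nak}: pick a finite-length polynomial subrepresentation $V \subset K_{\fm}$ generating $K_{\fm}$ over $A_{\fm}$; for all sufficiently large $N$, the finite-dimensional space $V(\bC^{N|N}, \bC^{N|N})$ generates $K_{\fm}(\bC^{N|N}, \bC^{N|N})$ as a module over the classical Noetherian local ring $A(\bC^{N|N}, \bC^{N|N})_{\fm_N}$, where $\fm_N = \fm \cap A(\bC^{N|N}, \bC^{N|N})$. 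The inclusion $V \subset \fm K_{\fm}$ then forces $V(\bC^{N|N}, \bC^{N|N}) \subset \fm_N K_{\fm}(\bC^{N|N}, \bC^{N|N})$, and classical Nakayama makes this module vanish for all such $N$, whence $K_{\fm} = 0$.

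The main obstacle is the final Nakayama step: one must descend carefully to finite level in order to exploit the classical Noetherian situation, which is precisely where equivariant finite generation of $K_{\fm}$ becomes essential. This is parallel to the corresponding step in Lemma~\ref{lem:nak}, with $A_{\fm}$-modules playing the role previously played by $A$-modules.
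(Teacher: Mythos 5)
Your argument coincides with the paper's up through the point where you obtain the short exact sequence, conclude from Lemma~\ref{lem:eqfg} that $K_{\fm}$ is equivariantly finitely generated, and deduce $K_{\fm}=\fm K_{\fm}$ from Lemma~\ref{lem:phi-modm} and freeness of the quotient. The problem is the final Nakayama step. You choose a finite length polynomial subrepresentation $V$ \emph{inside $K_{\fm}$} and then evaluate $V$ and $K_{\fm}$ at $(\bC^{N|N},\bC^{N|N})$. But $K_{\fm}$ is not an object of $\Mod_A$: localizing at $\fm$ (an ideal of the infinite-level algebra) destroys the polynomial functor structure, so the expressions $K_{\fm}(\bC^{N|N},\bC^{N|N})$ and $V(\bC^{N|N},\bC^{N|N})$ are not defined. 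Moreover, the $\fq\times\fq$-action on a localization is not polynomial (acting on a fraction $k/s$ produces ever-growing denominators), which is exactly why the paper only records the weaker ``polynomially $\fh$-equivariant'' structure on $K_{\fm}$; a ``finite-length polynomial subrepresentation of $K_{\fm}$'' is therefore not something you can invoke directly, and even granting it, the claim that its finite-level evaluation generates a finite-level localization of $K$ over $A(\bC^{N|N},\bC^{N|N})_{\fm_N}$ is precisely the delicate descent that the proof of Lemma~\ref{lem:nak} was designed to handle (via highest weight vectors), not something that follows formally.

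The repair is to clear denominators and reduce to Lemma~\ref{lem:nak} applied to an honest $A$-module, which is what the paper does: since $K_{\fm}$ is equivariantly finitely generated and each generator can be multiplied by a unit of $A_{\fm}$ so as to come from $K$, one may choose a finite length polynomial $\fq\times\fq$-subrepresentation $V\subset K$ (not $K_{\fm}$) whose image generates $K_{\fm}$ over $A_{\fm}$. Let $K_0=A\cdot V\subset K$, a finitely generated $A$-module with $(K_0)_{\fm}=K_{\fm}$. Then $K_0/\fm K_0$ is killed by $\fm$, so localization at $\fm$ acts trivially on it, and $(K_0/\fm K_0)_{\fm}=K_{\fm}/\fm K_{\fm}=0$ gives $K_0=\fm K_0$. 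Now Lemma~\ref{lem:nak}, applied to the genuine $A$-module $K_0$, yields $(K_0)_{\fm}=0$, hence $K_{\fm}=0$. In short: your outline is the right one, but the last step should descend to a submodule of $K$ and cite Lemma~\ref{lem:nak}, rather than attempt a finite-level Nakayama argument directly on the localized module, where evaluation at $\bC^{N|N}$ has no meaning.
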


\begin{proof}
The argument from \cite[Proposition~4.11]{symsp1} applies. For the convenience of the reader, we produce it here:

The assignment $M \mapsto \phi_M$ commutes with filtered colimits, and so it suffices to treat the case where $M$ is finitely generated. Let $R$ be the kernel of $\phi_M$, which is an $A$-submodule of $M$ by Lemma~\ref{lem:ker}, and let $N=M/\fm M \otimes \bC[K]$. By Lemma~\ref{lem:phi-locm}, the localization of $\phi_M$ at $\fm$ is a surjection. Since localization is exact, we have an exact sequence of polynomially $\fh$-equivariant $A_{\fm}$-modules
\begin{displaymath}
0 \to R_{\fm} \to M_{\fm} \to N_{\fm} \to 0.
\end{displaymath}
From Lemma~\ref{lem:eqfg}, we conclude that $R_{\fm}$ is equivariantly finitely generated as an $A_{\fm}$-module. Let $V \subset R$ be a finite length polynomial $\fq \times \fq$-representation generating $R_{\fm}$ as an $\vert A_{\fm} \vert$-module, and let $R_0$ be the $A$-submodule of $R$ generated by $V$. Note that $R_0$ is finitely generated as an $A$-module and $(R_0)_{\fm}=R_{\fm}$. Now, the mod $\fm$ reduction of the above exact sequence is exact, by the freeness of $N_{\fm}$, and the reduction of $M_{\fm} \to N_{\fm}$ is an isomorphism by Lemma~\ref{lem:phi-modm}. We conclude that $R/\fm R=R_0/\fm R_0=0$. Lemma~\ref{lem:nak} thus shows that $(R_0)_{\fm}=0$ and so $R_{\fm}=0$, and the proposition is proved.
\end{proof}

\section{The generic category} \label{s:gen}

\subsection{Algebraic representations of the infinite isomeric algebra} \label{ss:isomeric-alg}

Recall that $\bV=\bC^{\infty|\infty}$ is our fixed isomeric vector space with basis $\{e_i,f_i\}_{i \ge 1}$. The dual space $\bV^*$ is naturally a representation of $\fq$. We define the restricted dual, denoted $\bV_*$, to be the subspace of $\bV^*$ spanned by the dual basis vectors $e_i^*$ and $f_i^*$. This is a $\fq$-subrepresentation of $\bV^*$, and isomorphic to the direct limit of the spaces $(\bC^{n|n})^*$. We say that a representation of $\fq$ is {\bf algebraic} if it occurs as a subquotient of a (perhaps infinite) direct sum of mixed tensor spaces $T_{n,m}=\bV^{\otimes n} \otimes \bV_*^{\otimes m}$. We let $\Rep^{\alg}(\fq)$ be the category of algebraic representations, and $\Rep^{\alg,\rf}(\fq)$ the subcategory spanned by objects of finite length. This category is studied in detail in \cite{serganova}, where it is denoted $\operatorname{Trep}(\fg)$; see \cite[Definition~3.2]{serganova} for the precise definition. (Similar categories for classical Lie algebras were studied in \cite{koszulcategory, penkovserganova, penkovstyrkas, infrank}.) We will require some results from \cite{serganova}, which we now review.

\begin{proposition} \label{prop:qalg}
We have the following:
\begin{enumerate}
\item The representation $T_{n,m}$ has finite length.
\item Every algebraic representation of $\fq$ is the union of its finite length subobjects.
\item We have $\Rep^{\alg,\rf}(\fq) = \operatorname{Trep}(\fg)$.
\item For strict partitions $\lambda$ and $\mu$, there is a simple object $V_{\lambda,\mu}$ of $\Rep^{\alg}(\fq)$, and every simple object is isomorphic to $V_{\lambda,\mu}$ or $V_{\lambda,\mu}[1]$ for some $\lambda$ and $\mu$.
\item For strict partitions $\lambda$ and $\mu$, the representation $\bT_\lambda(\bV) \otimes \bT_\mu(\bV_*)$ is injective in $\Rep^{\alg}(\fq)$; in fact, it is the injective envelope of $V_{\lambda,\mu}$.
\item Every finite length object of $\Rep^{\alg}(\fq)$ has finite injective dimension.
\end{enumerate}
\end{proposition}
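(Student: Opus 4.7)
My plan is to derive each of the six statements by matching our setup with the conventions in \cite{serganova} and invoking results established there. The first step is to prove (a) and (b) directly. For (a), I would exhibit $T_{n,m}$ as a finite-length module by appealing to a queer analog of Schur--Sergeev duality: $\End_{\fq}(T_{n,m})$ is identified with a finite-dimensional ``queer walled Brauer algebra,'' and a double-centralizer argument then forces $T_{n,m}$ to decompose as a finite direct sum of simples with finite-dimensional multiplicity spaces. Part (b) is then a formal consequence: any subquotient $M$ of $\bigoplus_{\alpha} T_{n_{\alpha},m_{\alpha}}$ is the directed union of its intersections with finite partial sums, each of which is a subquotient of a finite-length module by (a).

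With (a) and (b) in hand, I would turn to (c) and reconcile our definition with Serganova's \cite[Definition~3.2]{serganova}. Her $\operatorname{Trep}(\fq)$ is defined intrinsically via a weight condition together with a finiteness hypothesis, so the content of (c) is that a $\fq$-module lies in $\operatorname{Trep}(\fq)$ if and only if it is a finite-length subquotient of some $T_{n,m}$. The direction ``tensor subquotient lies in $\operatorname{Trep}(\fq)$'' is immediate from the highest-weight analysis of $\bV$ and $\bV_*$. The converse requires showing every object of $\operatorname{Trep}(\fq)$ embeds into a finite direct sum of mixed tensor spaces, which is essentially Serganova's injective-envelope computation.

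Parts (d), (e), and (f) then translate directly into theorems from \cite{serganova}. For (d), I would cite the classification of simple objects $V_{\lambda,\mu}$ by pairs of strict partitions, established through highest-weight theory in her paper. For (e), the injectivity of $\bT_\lambda(\bV) \otimes \bT_\mu(\bV_*)$ together with the identification of its socle as $V_{\lambda,\mu}$ is proved in \cite{serganova}; the key input is that $\bV$ and $\bV_*$ are themselves injective in $\Rep^{\alg}(\fq)$, and that tensor products of these building blocks remain injective in the algebraic category. Part (f) then follows from iterating the injective-envelope construction to obtain an explicit finite minimal injective resolution, with the resolution length controlled by the partition combinatorics.

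The main obstacle I anticipate is the definitional reconciliation in part (c). Our definition of $\Rep^{\alg,\rf}(\fq)$ is phrased in terms of tensor-generating data, whereas Serganova's $\operatorname{Trep}(\fq)$ is phrased intrinsically in terms of weights and categorical finiteness. Matching the two requires importing the injective-envelope theorem underlying (e), so in practice (c) and (e) must be proved in tandem and together form the substantive technical core of the proposition. Once they are established, parts (d) and (f) reduce to routine applications of the structure theorems in \cite{serganova}.
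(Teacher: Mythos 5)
Your outline for parts (c)--(f) is essentially the paper's: both reduce the proposition to results of Grantcharov--Serganova, the only substantive point being the reconciliation of the tensor-subquotient definition with $\operatorname{Trep}(\fg)$, which the paper handles by quoting that every simple of $\operatorname{Trep}(\fg)$ embeds in some $T_{n,m}$ and that $T_{n,m}$ is injective there. The genuine gap is your argument for (a). You claim that since $\End_{\fq}(T_{n,m})$ is a finite-dimensional (queer walled Brauer) algebra, a double-centralizer argument forces $T_{n,m}$ to be a finite direct sum of simples. That is false: $\Rep^{\alg}(\fq)$ is not semisimple, and the mixed tensor spaces are exactly the nontrivial injective envelopes appearing in part (e). Already $\bV \otimes \bV_*$ is indecomposable of length greater than one: the evaluation map $\bV \otimes \bV_* \to \bC$ is a surjection of algebraic representations, but it cannot split, since a splitting would produce an invariant element of $\bV \otimes \bV_*$, i.e.\ a finite-rank operator on $\bV$ commuting with the action, and no such nonzero operator exists (the ``identity tensor'' $\sum_i e_i \otimes e_i^* + f_i \otimes f_i^*$ is an infinite sum and lies outside $\bV \otimes \bV_*$). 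If your semisimplicity claim held, every $T_{n,m}$ would be semisimple and injective, contradicting (e) and trivializing (f). Moreover, finite-dimensionality of the endomorphism algebra alone does not bound the length of a module, so even the weaker statement in (a) does not follow from the double-centralizer setup; finite length of $T_{n,m}$ is a theorem of \cite{serganova} (explained immediately after Definition~3.2 there), and the paper simply cites it. Your deduction of (b) from (a) is fine and matches the paper.

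A smaller point: for (f) you present finiteness of injective dimension as a routine iteration of injective envelopes ``controlled by the partition combinatorics,'' but the termination of that process is precisely the nontrivial content; it is \cite[Lemma~5.13]{serganova} and needs to be cited or proved, not asserted. With (a) repaired (by citation, as in the paper) and (f) attributed correctly, the remainder of your plan coincides with the paper's proof.
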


\begin{proof}
(a) is explained in the paragraph following \cite[Definition~3.2]{serganova}. (b) follows from (a) and the definition of algebraic representation.

We now explain (c). Since $T_{n,m}$ belongs to $\operatorname{Trep}(\fg)$ (see the paragraph following \cite[Definition~3.2]{serganova}) and $\operatorname{Trep}(\fg)$ is an abelian subcategory of the category of all $\fq$-modules, it follows that $\Rep^{\alg,\rf}(\fq) \subset \operatorname{Trep}(\fg)$. By \cite[Lemma~3.10]{serganova} and \cite[Corollary~4.3]{serganova}, every simple object of $\operatorname{Trep}(\fg)$ embeds into some $T_{n,m}$. By \cite[Proposition~4.10]{serganova}, $T_{n,m}$ is injective in $\operatorname{Trep}(\fg)$, and so we see that every object of $\operatorname{Trep}(\fg)$ embeds into a sum of $T_{n,m}$'s. This gives the reverse inclusion.

(d) follows from \cite[Lemma~3.10]{serganova}. (e) follows from \cite[Proposition 4.11]{serganova} (this only gives injectivity in $\Rep^{\alg,\rf}(\fq)$, but it is easy to see that the representation remains injective in $\Rep^{\alg}(\fq)$). (f) follows from \cite[Lemma~5.13]{serganova}.
\end{proof}

\subsection{Torsion modules}

We now define a notion of ``torsion'' for $A$-modules. We begin with the following observation:

\begin{proposition}
Let $M$ be an $A$-module. The following conditions are equivalent:
\begin{enumerate}
\item For every finitely generated submodule $M'$ of $M$ there is a non-zero ideal $\fa$ of $A$ such that $\fa M=0$. (Here ``ideal'' implies $\fq \times \fq$-stable.)
\item We have $M_{\fm}=0$.
\item For every $m \in M$ there exists $a \in A$ with non-zero image in $\bC[x_{i,j}]=A/(y_{i,j})$ such that $am=0$.
\end{enumerate}
\end{proposition}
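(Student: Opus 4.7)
The plan is to establish the cyclic chain (a) $\Rightarrow$ (b) $\Rightarrow$ (c) $\Rightarrow$ (a). (I read the equation in (a) as ``$\fa M' = 0$''; the literally written ``$\fa M = 0$'' appears to be a typo, since otherwise the quantifier over $M'$ plays no role.)

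For (a) $\Rightarrow$ (b), since localization commutes with filtered colimits and $M$ is the directed union of its finitely generated submodules (every element lies in a finite-length polynomial subrepresentation by semisimplicity of $\Rep^{\pol}(\fq\times\fq)^\circ$), it suffices to show $(M')_\fm = 0$ for each finitely generated $M' \subseteq M$. Given a nonzero ideal $\fa$ with $\fa M' = 0$, Lemma~\ref{lem:unit} yields $\fa + \fm = A$; writing $1 = a + m$ with $a \in \fa$ and $m \in \fm$ forces $M' = \fm M'$, and Lemma~\ref{lem:nak} then gives $(M')_\fm = 0$. The implication (b) $\Rightarrow$ (c) is essentially tautological from the containment $(y_{i,j}) \subseteq \fm$: for $m \in M$, vanishing of $m/1$ in $M_\fm$ supplies $s \in A \setminus \fm$ with $sm=0$, and any such $s$ lies outside $(y_{i,j})$, hence has nonzero image in $\bC[x_{i,j}]$.

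The main step, and the main obstacle, is (c) $\Rightarrow$ (a): element-wise annihilators must be combined into a single $\fq \times \fq$-stable annihilator of an arbitrary finitely generated $M' \subseteq M$ that still avoids $(y_{i,j})$. The plan is to pick a finite-length polynomial $\fq \times \fq$-subrepresentation $V$ generating $M'$, together with a homogeneous basis $v_1, \ldots, v_r$ of $V$, and apply (c) to each $v_i$ to obtain $a_i \in A$ with $a_i v_i = 0$ and nonzero image in $\bC[x_{i,j}]$. Replacing $a_i$ by its even component preserves both properties (using that $v_i$ is homogeneous and that $\bC[x_{i,j}]$ lies entirely in $A_0$), so I may assume each $a_i$ is even. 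Then the product $a = a_1 \cdots a_r$ is even, mutually commuting factors give $a v_i = 0$ for all $i$, and supercommutativity extends this to $a \cdot AV = a M' = 0$. Since $\bC[x_{i,j}]$ is an integral domain, the image of $a$ in it equals the nonzero product $\prod_i \overline{a_i}$, so $a \neq 0$. Finally, $\operatorname{Ann}(M')$ is automatically $\fq \times \fq$-stable because $M'$ is, so it is the required nonzero equivariant ideal. The crucial ingredients are the supercommutativity of $A$ (to permute even factors freely onto any chosen generator) and the domain property of $\bC[x_{i,j}]$ (to guarantee the product reduction stays nonzero).
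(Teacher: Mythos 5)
Your implications (a)$\Rightarrow$(b) and (b)$\Rightarrow$(c) are fine and essentially identical to the paper's argument (and your reading of (a) as ``$\fa M'=0$'' agrees with how the paper uses the statement). The problem is in (c)$\Rightarrow$(a). You pick a finite length polynomial $\fq\times\fq$-subrepresentation $V$ generating $M'$ and then take ``a homogeneous basis $v_1,\dots,v_r$ of $V$.'' But finite length objects of $\Rep^{\pol}(\fq\times\fq)$ are not finite dimensional: the simples are $T_\lambda\otimes T_\mu$ with $T_\lambda$ an irreducible polynomial representation of the \emph{infinite} queer algebra $\fq$ (already the standard representation $\bV=\bC^{\infty|\infty}=T_{(1)}$ is simple and infinite dimensional). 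So $V$ admits no finite homogeneous basis, and your product $a=a_1\cdots a_r$ would be an infinite product. If instead you take finitely many vectors $m_1,\dots,m_k$ generating $M'$ (equivariantly, say spanning highest weight spaces, or as plain $A$-module generators of a non-equivariant submodule), then $a=a_1\cdots a_k$ only annihilates the plain span $\sum_i A m_i$: for $X\in\fq\times\fq$ the Leibniz rule gives $0=X(am_i)=(Xa)m_i\pm a(Xm_i)$, so $a(Xm_i)=\mp(Xa)m_i$, which need not vanish. Your observation that the annihilator of an equivariant submodule is $\fq\times\fq$-stable is correct, but it is beside the main point: the crux is to produce a \emph{nonzero} element of the annihilator of the full equivariant submodule, and your argument never does this.

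This is exactly where the paper has to do genuine work: following \cite[Prop.~2.2]{sym2noeth}, it shows that there is an $n$, depending only on the $m_i$, with $a^n(gm_i)=0$ for all $g\in\fq\times\fq$; hence $a^n$ kills the whole equivariant submodule generated by the $m_i$, and the (equivariant) ideal generated by $a^n$ is the required nonzero $\fa$. Some such orbit-annihilation statement is unavoidable here; with it your argument can be repaired, but as written the passage from elementwise annihilators to an equivariant annihilating ideal does not go through.
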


\begin{proof}
First suppose that (a) holds. Let $M' \subset M$ be finitely generated with non-zero annihilator $\fa$. Since $\fa+\fm=A$ by Lemma~\ref{lem:unit}, we have $\fm M'=M'$, and so $M'_{\fm}=0$ by Lemma~\ref{lem:nak}. Since this holds for all finitely generated $M' \subset M$, it follows that $M_\fm=0$ and hence (b) holds.

Now suppose (b) holds. Given $m \in M$, there exists $s \in A \setminus\fm$ such that $sm=0$. As $s$ has non-zero reduction in $\bC[x_{i,j}]$, one can take $a=s$ in (c). Thus (c) holds.

Finally, suppose (c) holds. Let $M'$ be a submodule of $M$ generated by $m_1, \ldots, m_k$. Let $a_i m_i=0$ with $a_i$ as in (c). Let $a=a_1 \cdots a_k$, which has non-zero image in $A/(y_{i,j})$ since $\bC[x_{i,j}]$ is a domain, and annihilates each $m_i$. Following the proof of \cite[Prop.~2.2]{sym2noeth}, we see that there exists $n$, depending only on the $m_i$, such that $a^n(gm_i)=0$ for all $g \in \fq \times \fq$. Now (a) holds by taking $\fa$ to be the (non-zero) ideal of $A$ generated by $a^n$.
\end{proof}

We say that an $A$-module is {\bf torsion} if it satisfies the equivalent conditions of the above proposition. We write $\Mod_A^{\tors}$ for the category of torsion modules. It is clearly a Serre subcategory of $\Mod_A$.

\subsection{Statement of results}

We define the {\bf generic category} $\Mod_A^{\gen}$ to be the Serre quotient $\Mod_A/\Mod_A^{\tors}$. We write $T \colon \Mod_A \to \Mod_A^{\gen}$ for the localization functor and let $S$ be its right adjoint (the section functor). The goal of this section is to understand the structure of the generic category and the behavior of $T$ and $S$. We accomplish this by relating the generic category to $\Rep^{\alg}(\fq)$.

Let $M$ be an $A$-module. Define $\Phi(M)=M/\fm M$, where $\fm$ is the maximal ideal of $\vert A \vert$ considered in the previous section. Since $\fm$ is stable under $\fh \subset \fq \times \fq$, it follows that $\Phi(M)$ is naturally a representation of $\fh \cong \fq$. It is easily seen to be algebraic: indeed, expressing $M$ as a quotient of $A \otimes V$, with $V$ a polynomial representation of $\fq \times \fq$, we see that $\Phi(M)$ is a quotient of $V \vert_{\fh}$, and thus algebraic. We have thus defined a functor
\begin{displaymath}
\Phi \colon \Mod_A \to \Rep^{\alg}(\fq).
\end{displaymath}
Since $\Phi$ is cocontinuous and its source and target are Grothendieck abelian categories, it admits a right adjoint $\Psi$. The following is the main theorem of this section:

\begin{theorem} \label{thm:gen}
We have the following:
\begin{enumerate}
\item The functor $\Phi$ is exact.
\item The kernel of $\Phi$ is $\Mod_A^{\tors}$.
\item The counit $\Phi \Psi \to \id$ is an isomorphism.
\item The functor $\Phi$ induces an equivalence $\Mod_A^{\gen} \to \Rep^{\alg}(\fq)$.
\item The unit $V \otimes A \to \Psi(\Phi(V \otimes A))$ is an isomorphism for any $V \in \Rep^{\pol}(\fq \times \fq)$.
\end{enumerate}
\end{theorem}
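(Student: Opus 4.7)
\textbf{Parts (a), (b), and the key computation.} Parts (a) and (b) fall out of Theorem~\ref{thm:loc}, which provides a functorial isomorphism $M_\fm \cong \Phi(M) \otimes_\bC A_\fm$. Since $(-)_\fm$ is exact and $A_\fm$ is a non-zero $\bC$-vector space (hence faithfully flat over $\bC$), chasing a short exact sequence through this isomorphism gives exactness of $\Phi$, proving (a); and $\Phi(M)=0$ iff $M_\fm=0$ iff $M$ is torsion, proving (b). For the key computation, I observe that for $V \in \Rep^\pol(\fq \times \fq)$, $\Phi(V \otimes A) = V \otimes (A/\fm) = V$ as a $\bC$-vector space, with $\fh$-action restricted from $\fq \times \fq$. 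Using $\fh = \{(\tau g, g)\}$, for $V = T_\lambda \otimes T_\mu$ this identifies $\Phi(V \otimes A) \cong \bT_\lambda(\bV_*) \otimes \bT_\mu(\bV)$ in $\Rep^\alg(\fq)$, which by Proposition~\ref{prop:qalg}(e) is injective and runs over all indecomposable injectives. Combined with Proposition~\ref{prop:qalg}(b), every object of $\Rep^\alg(\fq)$ embeds into a direct sum of objects in the essential image of $\Phi$.

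\textbf{Part (e), the technical heart.} I plan to verify directly that $V \otimes A$ corepresents the functor $N \mapsto \Hom_\fh(\Phi(N), V|_\fh)$ on $\Mod_A$, which by adjunction characterizes $\Psi(V|_\fh)$. The natural candidate map sends $f \colon N \to V \otimes A$ to its reduction $\bar f \colon N/\fm N \to V$. Injectivity uses that $V \otimes A$ is torsion-free: it is free over $|A|$, and $|A|$ injects into $|A_\fm|$ since the even part of $A$ is a polynomial domain. For surjectivity, I plan to construct the inverse via the coaction $\phi_M \colon M \to M/\fm M \otimes \bC[K]$ from \S\ref{s:local} together with the local isomorphism $A_\fm \cong \bC[K]_\fm$ supplied by $\phi_A$. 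This is a Frobenius-reciprocity construction for $\fh \subset \fq \times \fq$, reflecting that $A$ is a model (locally at $\fm$) for the coordinate ring of the homogeneous super-space $(\fq \times \fq)/\fh$, with $\fk$ transversal to $\fh$.

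\textbf{Parts (c), (d), and the main obstacle.} Part (c) follows from (e) by a copresentation argument: every $W \in \Rep^\alg(\fq)$ admits an exact sequence $0 \to W \to I^0 \to I^1$ with each $I^j$ of the form $\Phi(V_j \otimes A)$ for some $V_j \in \Rep^\pol(\fq \times \fq)$, by the key computation. Applying (e) and the triangle identity, the counit $\Phi\Psi \to \id$ is an iso on each $I^j$; exactness of $\Phi$ and the five lemma propagate this to $W$. Part (d) is then standard Serre quotient theory: (a) and (b) give a faithful exact functor $\bar\Phi\colon \Mod_A^\gen \to \Rep^\alg(\fq)$, whose right adjoint is fully faithful by (c), and essential surjectivity was shown above. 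The main obstacle throughout is the surjectivity half of the Hom-correspondence in (e); making the Frobenius-reciprocity construction rigorous in the super/queer setting, using the local structure theory from \S\ref{s:local} and careful sign bookkeeping, is the crux of the argument.
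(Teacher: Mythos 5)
Your overall architecture matches the paper: parts (a) and (b) from Theorem~\ref{thm:loc}, the identification of $\Phi(T_\lambda\otimes T_\mu\otimes A)$ with an injective of $\Rep^{\alg}(\fq)$, the deduction of (c) and (d) from (e) via copresentation by these injectives and Serre-quotient formalism, and the injectivity half of the Hom-correspondence (torsion-freeness of $V\otimes A$ together with the functorial localization isomorphism) are all essentially the paper's steps. The problem is exactly where you flag it, and the sketch you give does not close it. The composite $N \to N/\fm N\otimes \bC[K] \to V\otimes\bC[K]$ obtained from $\phi_N$ and $g\otimes 1$ lands in $V\otimes\bC[K]$, not in $V\otimes A$: the map $\phi_A\colon A\to\bC[K]$ is an isomorphism only after localizing at $\fm$, and there is no algebra map $\bC[K]\to A$ to descend along; moreover $\phi_N$ is only $K$-equivariant, so even granting a descent you would still need to produce $\fq\times\fq$-equivariance and $A$-linearity of the resulting map. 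The homogeneous-space picture is valid only on the open maximal-rank locus, which is precisely why a literal Frobenius-reciprocity/coinduction construction is not available globally; this is a genuine gap, not a routine verification.

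The paper never constructs an inverse. It proves injectivity of $\Phi\colon\Hom_A(F_{\lambda,\mu},F)\to\Hom_{\fq}(\Phi(F_{\lambda,\mu}),\Phi(F))$ for $F\in\cF$ (your argument), and then obtains surjectivity for free from an equality of finite dimensions, Proposition~\ref{prop:dim}, whose two sides are computed from the queer Cauchy decomposition \eqref{eqn:Q-cauchy} and from the Grantcharov--Serganova computation of Hom spaces between the injectives $\bT_\alpha(\bV)\otimes\bT_\beta(\bV_*)$ in $\Rep^{\alg}(\fq)$ \cite[Theorem 5.8]{serganova}. Note also that the paper only needs the bijection when the source is a generator $F_{\lambda,\mu}$: since these objects generate $\Mod_A$, the unit $F\to\Psi(\Phi(F))$ is then an isomorphism for all $F\in\cF$, which is (e); your stronger claim for arbitrary $N$ is equivalent to (e) but is harder to attack directly. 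To repair your proposal you should replace the Frobenius-reciprocity construction by this counting input (or an equivalent structural input from \cite{serganova}); without it the surjectivity half, which you correctly identify as the crux, is missing.
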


Appealing to our knowledge of $\Rep^{\alg}(\fq)$ (Proposition~\ref{prop:qalg}), we find:

\begin{corollary} \label{cor:gen}
We have the following:
\begin{enumerate}
\item If $M$ is a finitely generated $A$-module then $T(A)$ has finite length.
\item Every finite length object of $\Mod_A^{\gen}$ has finite injection dimension.
\item The injectives of $\Mod_A^{\gen}$ are exactly the objects $T(V \otimes A)$ with $V \in \Rep^{\pol}(\fq \times \fq)$.
\item The unit $V \otimes A \to S(T(V \otimes A))$ is an isomorphism, for any $V \in \Rep^{\pol}(\fq \times \fq)$.
\end{enumerate}
\end{corollary}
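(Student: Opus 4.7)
The plan is to deduce each assertion from the corresponding piece of Proposition~\ref{prop:qalg} by transporting across the equivalence $\bar\Phi \colon \Mod_A^{\gen} \to \Rep^{\alg}(\fq)$ supplied by Theorem~\ref{thm:gen}(d). The key identification is that for any polynomial representation $V$ of $\fq \times \fq$,
\[
\bar\Phi(T(V \otimes A)) = \Phi(V \otimes A) = V \otimes (A/\fm) \cong V,
\]
where $\fh \cong \fq$ acts on $V$ via its embedding $g \mapsto (\tau g, g)$ into $\fq \times \fq$. A preliminary fact I will use is that $V|_\fh$ is algebraic; this follows from $\bV^\tau \cong \bV_*$ as $\fq$-representations (the Chevalley automorphism implements duality on the standard representation), since then $(\bT_\lambda \boxtimes \bT_\mu)|_\fh \cong \bT_\lambda(\bV_*) \otimes \bT_\mu(\bV)$, a subquotient of the mixed tensor $T_{|\lambda|,|\mu|}$.

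For (a), write $M$ as a quotient of $V \otimes A$ with $V$ polynomial of finite length. The exactness of $\Phi$ (Theorem~\ref{thm:gen}(a)) makes $\Phi(M)$ a quotient of $V|_\fh$, which is a finite direct sum of subquotients of the mixed tensors $T_{m,n}$. Each $T_{m,n}$ has finite length by Proposition~\ref{prop:qalg}(a), so $T(M) \cong \bar\Phi^{-1}(\Phi(M))$ has finite length. Assertion (b) is immediate from Proposition~\ref{prop:qalg}(f) under the equivalence. For (d), since $T = \bar\Phi^{-1} \circ \Phi$ and $\bar\Phi$ is an equivalence, passing to right adjoints yields $S \cong \Psi \circ \bar\Phi$, so $S \circ T \cong \Psi \circ \Phi$; the unit $V \otimes A \to S(T(V \otimes A))$ is thereby identified with $V \otimes A \to \Psi(\Phi(V \otimes A))$, which is an isomorphism by Theorem~\ref{thm:gen}(e).

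The substantive step is (c). For any polynomial $V$, the representation $V|_\fh$ decomposes as a direct sum of pieces $\bT_\lambda(\bV_*) \otimes \bT_\mu(\bV)$, each injective by Proposition~\ref{prop:qalg}(e); as $\Rep^{\alg}(\fq)$ is locally noetherian (parts (a) and (b) of Proposition~\ref{prop:qalg} imply every object is a union of its finite-length, hence noetherian, subobjects), this direct sum remains injective, so $T(V \otimes A)$ is injective. Conversely, in a locally noetherian Grothendieck category every injective is a direct sum of injective envelopes of simples, and by Proposition~\ref{prop:qalg}(d)--(e) each such envelope (up to shift) has the form $\bT_\lambda(\bV) \otimes \bT_\mu(\bV_*)$, which is realized as $V|_\fh$ for $V = \bT_\mu \boxtimes \bT_\lambda$ (possibly shifted). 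Assembling these summands, every injective in $\Rep^{\alg}(\fq)$ has the form $V|_\fh$ for some polynomial $V$, which proves (c).

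The main obstacle I anticipate is the identification $\bV^\tau \cong \bV_*$: this underpins everything by ensuring that $\Phi$ lands in algebraic representations and that the injective envelopes from Proposition~\ref{prop:qalg}(e) are hit by objects of the form $V|_\fh$. Verifying it reduces to a finite calculation with the explicit matrix description of $\tau$ given in \S\ref{ss:queer}. A secondary bookkeeping point is confirming local noetherianity of $\Rep^{\alg}(\fq)$, which is automatic from Proposition~\ref{prop:qalg}(a),(b).
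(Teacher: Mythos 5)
Your proposal is correct and follows exactly the route the paper intends: the paper gives no separate argument for this corollary beyond ``appealing to Proposition~\ref{prop:qalg}'' through the equivalence of Theorem~\ref{thm:gen}, and your identification $\bar\Phi(T(V\otimes A)) \cong V|_{\fh}$ together with $S\circ T \cong \Psi\circ\Phi$ is precisely the intended transport (the paper itself already uses $V|_{\fh}$ being algebraic, via $\bV^{\tau}\cong\bV_*$, when defining $\Phi$). The only loose end is minor parity/half-tensor bookkeeping in (c), which does not affect the conclusion since $V$ ranges over all polynomial representations.
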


Theorem~\ref{thm:gen} is analogous to \cite[Theorem~5.1]{symsp1}, \cite[Theorem~3.1]{sym2noeth}, \cite[Theorem~6.1]{periplectic}. We follow the proof of \cite{symsp1}, which is modeled on the proofs in the other two papers but contains some simplifications.

\subsection{An equality of dimensions} \label{ss:equality}

We will require the following result in our proof of Theorem~\ref{thm:gen}:

\begin{proposition} \label{prop:dim}
  For strict partitions $\lambda,\mu,\alpha,\beta$, we have
  \begin{align*}
    \dim \Hom_{\fq(\bV) \times \fq(\bW)}(\bT_\lambda(\bV) \otimes \bT_\mu(\bW), \bT_\alpha(\bV) \otimes \bT_\beta(\bW) \otimes A)\\
    = \dim \Hom_{\fq(\bV)}(\bT_\lambda(\bV) \otimes \bT_\mu(\bV_*), \bT_\alpha(\bV) \otimes \bT_\beta(\bV_*))
  \end{align*}
\end{proposition}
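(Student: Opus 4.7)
The plan is to expand both sides as sums over strict partitions of products of tensor‐product structure constants for queer Schur functors, and then match them using the queer Cauchy identity for Schur $P$-functions. Throughout, write $m^Z_{X,Y}:=\dim\Hom_{\fq(\bV)}(\bT_Z(\bV),\bT_X(\bV)\otimes\bT_Y(\bV))$.

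For the left-hand side, I would substitute the queer Cauchy decomposition \eqref{eqn:Q-cauchy} of $A$ to get
\[
\bT_\alpha(\bV)\otimes\bT_\beta(\bW)\otimes A \;=\; \bigoplus_\nu 2^{-\delta(\nu)}\,(\bT_\alpha\otimes\bT_\nu)(\bV)\otimes(\bT_\beta\otimes\bT_\nu)(\bW).
\]
Since the simples of $\Rep^{\pol}(\fq(\bV)\times\fq(\bW))^{\circ}$ are external tensor products of simples (up to parity shift), $\Hom$-spaces between such objects split over the two copies of $\fq$; combining this with the effect of the half tensor product gives
\[
\text{LHS} \;=\; \sum_\nu 2^{-\delta(\nu)}\,m^\lambda_{\alpha,\nu}\,m^\mu_{\beta,\nu}.
\]

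For the right-hand side, I would reduce to a finite-level computation. Since $\bV=\varinjlim_n \bC^{n|n}$, $\bV_*=\varinjlim_n(\bC^{n|n})^*$, and the queer Schur functors and tensor products commute with these filtered colimits, for $n$ larger than the lengths of $\lambda,\mu,\alpha,\beta$ the RHS stabilizes to $\dim\Hom_{\fq_n}(T_{\lambda,n}\otimes T_{\mu,n}^*,\,T_{\alpha,n}\otimes T_{\beta,n}^*)$. Finite-dimensional duality in the rigid supercategory $\Rep(\fq_n)$ then gives
\[
\Hom_{\fq_n}(T_{\lambda,n}\otimes T_{\mu,n}^*,\,T_{\alpha,n}\otimes T_{\beta,n}^*) \;\cong\; \Hom_{\fq_n}(T_{\lambda,n}\otimes T_{\beta,n},\,T_{\alpha,n}\otimes T_{\mu,n}),
\]
which stabilizes to $\sum_\rho 2^{-\delta(\rho)}\,m^\rho_{\lambda,\beta}\,m^\rho_{\alpha,\mu}$ by semisimplicity of $\Rep^{\pol}(\fq)^{\circ}$ together with $\dim\End_{\fq}(\bT_\rho)=2^{\delta(\rho)}$ (since $\End_{\fq}(\bT_\rho)$ equals $\bC$ or $\bC[\epsilon]/(\epsilon^2)$ according to $\delta(\rho)$).

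It then remains to prove the identity
\[
\sum_\nu 2^{-\delta(\nu)}\,m^\lambda_{\alpha,\nu}\,m^\mu_{\beta,\nu} \;=\; \sum_\rho 2^{-\delta(\rho)}\,m^\rho_{\lambda,\beta}\,m^\rho_{\alpha,\mu}.
\]
Via the characteristic map $\ch$ of the proof of Proposition~\ref{prop:hecke-pieri}, together with $Q_\lambda=2^{\ell(\lambda)}P_\lambda$, the multiplicities $m^Z_{X,Y}$ are determined (up to explicit powers of $2$ in $\delta$ and $\ell$) by the $P$-basis expansion of products $P_X P_Y$. The identity then amounts to computing the coefficient of $P_\lambda(x)P_\mu(y)$ in $P_\alpha(x)P_\beta(y)\cdot K(x,y)$ using the two symmetric expansions
\[
K(x,y) \;=\; \prod_{i,j}\frac{1+x_iy_j}{1-x_iy_j} \;=\; \sum_\nu P_\nu(x)Q_\nu(y) \;=\; \sum_\rho Q_\rho(x)P_\rho(y)
\]
of the queer Cauchy kernel. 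The main obstacle is the careful bookkeeping of powers of $2$ arising from the half tensor products in \eqref{eqn:Q-cauchy}, from $Q_\lambda=2^{\ell(\lambda)}P_\lambda$, and from $\dim\End_{\fq}(\bT_\lambda)=2^{\delta(\lambda)}$; verifying that these constants conspire to match the $2^{-\delta(\nu)}$ weighting in the Cauchy expansion is where the proof has real technical content. Conceptually, the statement is the queer analog of the classical tensor--Hom symmetric function identity $\sum_\nu c^\lambda_{\alpha\nu}c^\mu_{\beta\nu}=\sum_\rho c^\rho_{\lambda\beta}c^\rho_{\alpha\mu}$ in the $\GL$ case.
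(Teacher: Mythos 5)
Your computation of the left-hand side is fine and agrees with the paper: substituting the Cauchy decomposition \eqref{eqn:Q-cauchy} gives $\sum_\nu 2^{-\delta(\nu)} m^\lambda_{\alpha,\nu}\, m^\mu_{\beta,\nu}$. The gap is in your treatment of the right-hand side. That Hom space is taken in $\Rep^{\alg}(\fq)$, over the infinite queer algebra acting on $\bV$ and its \emph{restricted} dual $\bV_*$, and it is \emph{not} the stable limit of the finite-rank spaces $\Hom_{\fq_n}(T_{\lambda,n}\otimes T_{\mu,n}^*,\, T_{\alpha,n}\otimes T_{\beta,n}^*)$. The category $\Rep^{\alg}(\fq)$ is not rigid: there is an evaluation $\bV\otimes\bV_*\to\bC$ but no coevaluation $\bC\to\bV\otimes\bV_*$ (it would be an infinite sum), so the duality move $\Hom(T_\lambda\otimes T_\mu^*,\,T_\alpha\otimes T_\beta^*)\cong\Hom(T_\lambda\otimes T_\beta,\,T_\alpha\otimes T_\mu)$ you use is unavailable at infinite rank, and morphisms that contract a $\bV\otimes\bV_*$ pair and then re-insert one simply do not exist there. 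This is already visible in the $\GL$ model you invoke: the endomorphism algebra of $\bV\otimes\bV_*$ over $\gl(\infty)$ is one-dimensional, while $\End_{\gl_n}(V\otimes V^*)$ is two-dimensional for all $n\ge 2$; correspondingly the identity $\sum_\nu c^\lambda_{\alpha\nu}c^\mu_{\beta\nu}=\sum_\rho c^\rho_{\lambda\beta}c^\rho_{\alpha\mu}$ you state as the conceptual analog is false (take $\lambda=\mu=\alpha=\beta=(1)$: the left side is $1$, the right side is $2$). The same failure occurs in the queer case, so the combinatorial identity you reduce to is false, and no amount of bookkeeping of powers of $2$ (which you in any case leave unverified) can complete the argument.

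What the proposition needs for the right-hand side is a computation of Hom spaces between mixed tensor modules carried out directly in $\Rep^{\alg}(\fq)$; this is exactly what the paper imports from Grantcharov--Serganova. Namely, \cite[Theorem 5.8]{serganova} gives $\dim\Hom_{\fq}(\bT_\lambda(\bV)\otimes\bT_\mu(\bV_*),\,\bT_\alpha(\bV)\otimes\bT_\beta(\bV_*))=\sum_{|\gamma|=r}2^{-\delta(\gamma)}f^\lambda_{\alpha,\gamma}f^\mu_{\beta,\gamma}$ when $r=|\lambda|-|\alpha|=|\mu|-|\beta|$, with \cite[Corollary 3.7]{serganova} and \cite[Corollary 5.11]{serganova} covering the vanishing case; this is the same one-sided sum you obtained for the left-hand side, so there is no residual symmetric-function identity to prove. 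If you wish to avoid quoting that theorem, you would need to analyze morphisms of mixed tensor modules at infinite rank (a queer ``contraction-only'' walled Brauer analysis), not a finite-rank stabilization plus rigidity argument.
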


\begin{proof}
Let
  \[
    f^\lambda_{\mu,\nu} = \dim \Hom_{\fq(\bV)}(\bT_\lambda(\bV), \bT_\mu(\bV) \otimes \bT_\nu(\bV)).
  \]
  By \eqref{eqn:Q-cauchy}, we have
\[
  \Sym(2^{-1}(\bV \otimes \bW)) = \bigoplus_\gamma 2^{-\delta(\gamma)} \bT_\gamma(\bV) \otimes \bT_\gamma(\bW).
\]

We now have two cases. If $|\lambda|-|\alpha|\ne |\mu|-|\beta|$, then the left side is $0$ (by \cite[Corollary 3.7]{serganova}, if $f^\lambda_{\mu,\nu} \ne 0$, then $|\lambda|=|\mu|+|\nu|$). The right side is also 0 by \cite[Corollary 5.11]{serganova}. Otherwise, suppose that $r=|\lambda|-|\alpha|=|\mu|-|\beta|$. Then the left side simplifies to
  \[
    \sum_{|\gamma|=r} 2^{-\delta(\gamma)} f^\lambda_{\alpha, \gamma} f^\mu_{\beta, \gamma}.
  \]
  This is also the right hand side by \cite[Theorem 5.8]{serganova} ($Z(\lambda,\mu)$ is defined to be  $2^{-\delta(\lambda)}(\bT_\lambda(\bV) \otimes \bT_\lambda(\bV^*))$ in \cite[\S 4.1]{serganova}, which accounts for the difference in the form of the formula).
\end{proof}

\subsection{Proof of Theorem~\ref{thm:gen}}

The proofs of the following lemmas are nearly identical to those in \cite[\S 5.2]{symsp1}, so we omit most details. For strict partitions $\lambda$ and $\mu$, let $F_{\lambda,\mu}=\bT_{\lambda}(\bV) \otimes \bT_{\mu}(\bV) \otimes A$, and let $\cF$ be the class of $A$-modules that are direct sums (perhaps infinite) of $F_{\lambda,\mu}$'s.

\begin{lemma}
Let $f \colon M \to N$ be a morphism of $A$-modules such that $\Phi(f)=0$. Then the localized morphism $f_{\fm} \colon M_{\fm} \to N_{\fm}$ vanishes.
\end{lemma}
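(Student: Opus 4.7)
The plan is to reduce the lemma directly to the functorial isomorphism supplied by Theorem~\ref{thm:loc}. That theorem asserts that for any $A$-module $M$ there is a canonical and functorial isomorphism
\[
\Theta_M \colon M_{\fm} \xrightarrow{\ \sim\ } (M/\fm M)\otimes_{\bC} A_{\fm}
\]
of $|A_{\fm}|$-modules. Functoriality means that for any morphism $f\colon M\to N$ of $A$-modules, the square with top row $f_{\fm}\colon M_{\fm}\to N_{\fm}$, bottom row $\bar f\otimes \id_{A_{\fm}}\colon (M/\fm M)\otimes A_{\fm}\to (N/\fm N)\otimes A_{\fm}$, and vertical arrows $\Theta_M,\Theta_N$, commutes. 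Here $\bar f\colon M/\fm M\to N/\fm N$ is the map induced by $f$ on the quotients.

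Now I would simply observe that, by the very definition in \S 5.3, $\Phi(f)$ is precisely the induced map $\bar f\colon M/\fm M\to N/\fm N$. So the hypothesis $\Phi(f)=0$ says exactly that $\bar f=0$, and hence that $\bar f\otimes \id_{A_{\fm}}=0$. Transporting back through the isomorphisms $\Theta_M$ and $\Theta_N$, we conclude $f_{\fm}=0$.

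There is really no obstacle here beyond invoking Theorem~\ref{thm:loc} and its naturality in $M$; the whole content of the lemma is that the section functor $M\mapsto M_{\fm}$ factors (at the level of morphism sets) through the ``reduction modulo $\fm$'' functor $\Phi$, and this is immediate once one knows that the localization is canonically extended-from-the-residue as a functor, not merely object by object. Thus the proof is a one-line appeal to the functorial form of Theorem~\ref{thm:loc}.
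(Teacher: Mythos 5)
Your proof is correct and is essentially the paper's argument: the paper simply cites \cite[Lemma~5.3]{symsp1}, whose proof is exactly this appeal to the functorial isomorphism of Theorem~\ref{thm:loc} (constructed from the natural map $\phi_M$), noting that $\Phi(f)=0$ kills the right-hand vertical map $\Phi(f)\otimes\id_{A_{\fm}}$ in the naturality square and hence kills $f_{\fm}$.
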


\begin{proof}
See \cite[Lemma~5.3]{symsp1}.
\end{proof}

\begin{lemma}
For any $F \in \cF$ and any strict partitions $\lambda$ and $\mu$, the map
\begin{displaymath}
\Phi \colon \Hom_A(F_{\lambda,\mu}, F) \to \Hom_{\fq}(\Phi(F_{\lambda,\mu}), \Phi(F))
\end{displaymath}
is an isomorphism.
\end{lemma}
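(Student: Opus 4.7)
The plan is to reduce to the case where $F = F_{\alpha,\beta}$ is a single summand, establish injectivity of $\Phi$ via the preceding lemma, and then conclude by matching finite dimensions using Proposition~\ref{prop:dim}.

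For the reduction, I will use that $F_{\lambda,\mu} = \bT_\lambda(\bV) \otimes \bT_\mu(\bW) \otimes A$ is generated as an $A$-module by the simple polynomial $\fq \times \fq$-representation $\bT_\lambda(\bV) \otimes \bT_\mu(\bW)$, so any equivariant map from it into a direct sum factors through finitely many summands. On the other side, $\Phi(F_{\lambda,\mu})$ is a finite-length object of $\Rep^{\alg}(\fq)$ (by Proposition~\ref{prop:qalg}(a)), and any map from a finite-length object of a Grothendieck category into a direct sum similarly factors through a finite sub-sum (its image has finitely many simple constituents, each supported on finitely many summands). Hence both Hom functors commute with direct sums over $\cF$, reducing the problem to $F = F_{\alpha,\beta}$. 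In that case, by adjunction and the Cauchy decomposition~\eqref{eqn:Q-cauchy}, the space $\Hom_A(F_{\lambda,\mu}, F_{\alpha,\beta}) = \Hom_{\fq \times \fq}(\bT_\lambda(\bV) \otimes \bT_\mu(\bW), \bT_\alpha(\bV) \otimes \bT_\beta(\bW) \otimes A)$ is finite-dimensional (only strict partitions $\gamma$ with $|\gamma|=|\lambda|-|\alpha|$ contribute), and Proposition~\ref{prop:dim} computes its dimension. Since $\Phi(A) = A/\fm = \bC$ and $\fh = \{(\tau g, g)\}$ acts on $\Phi(F_{\lambda,\mu}) = \bT_\lambda(\bV) \otimes \bT_\mu(\bW)$ by twisting the first tensor factor by $\tau$, a highest-weight comparison (mirroring the classical $\gl$ identity $V^\tau \cong V^*$ for the involution $g \mapsto -g^t$) identifies $\Phi(F_{\lambda,\mu}) \cong \bT_\lambda(\bV_*) \otimes \bT_\mu(\bV)$ as an $\fh \cong \fq$-module. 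Up to the symmetric braiding, this matches the right-hand side of Proposition~\ref{prop:dim}, giving equality of finite dimensions.

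For injectivity, suppose $\Phi(f) = 0$ for some $f \colon F_{\lambda,\mu} \to F_{\alpha,\beta}$. The preceding lemma yields $f_\fm = 0$. Now $F_{\alpha,\beta}$ is free as a $\vert A \vert$-module, the even part of $A$ is a polynomial ring (hence a domain) embedding into its localization at the even part of $\fm$, and $A$ is free over its even subalgebra; hence $A \hookrightarrow A_\fm$, and consequently $F_{\alpha,\beta} \hookrightarrow (F_{\alpha,\beta})_\fm$, which forces $f = 0$. An injection between finite-dimensional spaces of equal dimension is an isomorphism, completing the proof. The hard part will be the clean identification of $\Phi(F_{\lambda,\mu})$ as a $\fq$-representation—specifically, verifying that the $\tau$-twist of $\bV$ is $\bV_*$ rather than $\bV$ itself; this is subtle because $\tau$ has order four (not two) and involves the square root $\zeta$ on odd components, so sign and parity conventions require care. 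Everything else is essentially formal and parallels \cite[Lemma~5.4]{symsp1}.
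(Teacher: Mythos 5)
Your proposal is correct and takes essentially the same route as the paper, whose proof simply defers to the argument of \cite[Lemma~5.4]{symsp1} with Proposition~\ref{prop:dim} in place of the dimension count there: reduce to a single summand $F_{\alpha,\beta}$, deduce injectivity from the preceding lemma together with torsion-freeness of the free module $F_{\alpha,\beta}$, and conclude by matching the finite dimensions, with the identification of $\Phi(F_{\lambda,\mu})$ as a mixed tensor module via the Chevalley twist being exactly the point that Proposition~\ref{prop:dim} is designed to feed into. One small correction to your injectivity step: the even part of $A$ is not a polynomial ring (it contains products of the odd $y_{i,j}$); instead, use that $A$ is free over $\bC[x_{i,j}]$ and that any even element of $A \setminus \fm$ is a non-zerodivisor (it is a non-zerodivisor of $\bC[x_{i,j}]$ plus a nilpotent element), which still gives $A \hookrightarrow A_{\fm}$ and hence $F_{\alpha,\beta} \hookrightarrow (F_{\alpha,\beta})_{\fm}$.
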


\begin{proof}
See \cite[Lemma~5.4]{symsp1}, and use Proposition~\ref{prop:dim} in place of \cite[Proposition~3.9]{symsp1}.
\end{proof}

\begin{lemma}
Let $f \colon M \to N$ be a morphism of $A$-modules. Suppose that for all strict partitions $\lambda$ and $\mu$ the induced map
\begin{displaymath}
f_* \colon \Hom_A(F_{\lambda,\mu}, M) \to \Hom_A(F_{\lambda,\mu}, N)
\end{displaymath}
is an isomorphism. Then $f$ is an isomorphism.
\end{lemma}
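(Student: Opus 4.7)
The plan is to Yoneda-reduce the claim to semisimplicity of $\Rep^{\pol}(\fq \times \fq)^\circ$. First I would invoke the free-forgetful adjunction: for any $A$-module $P$ and any polynomial $\fq \times \fq$-representation $W$, there is a natural isomorphism of super vector spaces
\[
\Hom_A(W \otimes A, P) \;\cong\; \Hom_{\fq \times \fq}(W, P).
\]
Taking $W = \bT_\lambda(\bV) \otimes \bT_\mu(\bV)$ converts the hypothesis into the statement that for every pair of strict partitions $(\lambda,\mu)$, the map induced by $f$
\[
\Hom_{\fq \times \fq}(\bT_\lambda(\bV) \otimes \bT_\mu(\bV), M) \longrightarrow \Hom_{\fq \times \fq}(\bT_\lambda(\bV) \otimes \bT_\mu(\bV), N)
\]
is an isomorphism of super vector spaces.

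Second, I would appeal to semisimplicity. As recalled in \S\ref{ss:polrep}, the category $\Rep^{\pol}(\fq \times \fq)^\circ$ is semisimple and every simple object is isomorphic to some $T_\lambda \otimes T_\mu$ or to its parity shift $(T_\lambda \otimes T_\mu)[1]$. In this semisimple super setting the super Hom space $\Hom_{\fq \times \fq}(T_\lambda \otimes T_\mu, P)$ records the multiplicity of $T_\lambda \otimes T_\mu$ in its even part and the multiplicity of $(T_\lambda \otimes T_\mu)[1]$ in its odd part, since odd maps $T_\lambda \otimes T_\mu \to P$ are precisely even maps $(T_\lambda \otimes T_\mu)[1] \to P$. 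Hence, as $(\lambda,\mu)$ ranges over all pairs of strict partitions, the functors $\Hom_{\fq \times \fq}(T_\lambda \otimes T_\mu, -)$ are jointly isomorphism-reflecting on $\Rep^{\pol}(\fq \times \fq)$, and the previous paragraph forces $f$ to be an isomorphism of $\fq \times \fq$-representations.

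Finally, a bijective $A$-linear map is automatically an isomorphism in $\Mod_A$: the set-theoretic inverse is $A$-linear by transferring the action through $f$. Thus $f$ is an isomorphism in $\Mod_A$. The only potential pitfall, which is purely parity bookkeeping, is to observe that the odd part of the super Hom already detects the parity-shifted simples, so indexing by strict partitions (without separately invoking shifts) suffices to test whether $f$ is an isomorphism.
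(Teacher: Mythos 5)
Your proof is correct and is essentially the paper's argument: the paper disposes of the lemma by noting that the $F_{\lambda,\mu}$ generate $\Mod_A$ (citing the analogous lemma in \cite{symsp1}), and your free-module adjunction plus semisimplicity of $\Rep^{\pol}(\fq\times\fq)^{\circ}$ is exactly the self-contained unwinding of why these objects form a family of projective generators that jointly reflect isomorphisms. Your parity bookkeeping (odd maps from $T_\lambda\otimes T_\mu$ accounting for the shifted simples) is also the right way to see that indexing by strict partitions alone suffices.
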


\begin{proof}
This follows from the fact that the $F_{\lambda,\mu}$ generate $\Mod_A$, see \cite[Lemma~5.6]{symsp1}.
\end{proof}

\begin{lemma}
For $F \in \cF$, the unit $\eta_F \colon F \to \Psi(\Phi(F))$ is an isomorphism.
\end{lemma}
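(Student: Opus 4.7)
The plan is to deduce this lemma from the three preceding lemmas via a standard adjunction calculation. By the third of those lemmas, to show that $\eta_F$ is an isomorphism it suffices to check that for all strict partitions $\lambda$ and $\mu$ the induced map
\[
(\eta_F)_* \colon \Hom_A(F_{\lambda,\mu}, F) \to \Hom_A(F_{\lambda,\mu}, \Psi(\Phi(F)))
\]
is an isomorphism; this is what I would verify.

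First I would invoke the $\Phi \dashv \Psi$ adjunction to rewrite the target naturally as $\Hom_{\fq}(\Phi(F_{\lambda,\mu}), \Phi(F))$. A standard property of the unit of an adjunction is that, under this adjunction bijection (taken with $G = \Phi(F)$), $\eta_F$ corresponds to $\id_{\Phi(F)}$. It follows that for any $h \colon F_{\lambda,\mu} \to F$, the morphism $\eta_F \circ h$ corresponds to $\Phi(h) \colon \Phi(F_{\lambda,\mu}) \to \Phi(F)$. Equivalently, the composition
\[
\Hom_A(F_{\lambda,\mu}, F) \xrightarrow{(\eta_F)_*} \Hom_A(F_{\lambda,\mu}, \Psi(\Phi(F))) \xrightarrow{\sim} \Hom_{\fq}(\Phi(F_{\lambda,\mu}), \Phi(F))
\]
is precisely the map induced by applying the functor $\Phi$ on morphisms.

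The second lemma above asserts that this composite is an isomorphism, and the second arrow is an isomorphism by definition of the adjunction. Hence $(\eta_F)_*$ is an isomorphism for every choice of $\lambda$ and $\mu$, and an appeal to the third lemma finishes the proof. I do not expect any genuine obstacle here: all of the substantive content has been packaged into the second lemma (which in turn rested on the dimension computation of Proposition~\ref{prop:dim}), and what remains is a purely formal unwinding of the unit of adjunction.
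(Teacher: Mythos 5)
Your argument is correct and is essentially the proof the paper has in mind: the paper simply defers to the analogous \cite[Lemma~5.7]{symsp1}, whose content is exactly this formal unwinding — reduce via the generator-detection lemma to checking $(\eta_F)_*$ on the $F_{\lambda,\mu}$, use the adjunction and triangle identity to identify the composite with the map $h \mapsto \Phi(h)$, and invoke the preceding Hom-isomorphism lemma. No gaps.
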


\begin{proof}
See \cite[Lemma~5.7]{symsp1}.
\end{proof}

\begin{lemma}
Let $I$ be an injective object of $\Rep^{\alg}(\fq)$. Then the counit $\epsilon_I \colon \Phi(\Psi(I)) \to I$ is an isomorphism.
\end{lemma}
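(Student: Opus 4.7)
The plan is to reduce the claim to the preceding lemma about the unit via the triangle identity. Observe that if $\eta_F \colon F \to \Psi(\Phi(F))$ is an isomorphism for some $F \in \cF$, then applying $\Phi$ and using the triangle identity $\epsilon_{\Phi(F)} \circ \Phi(\eta_F) = \id_{\Phi(F)}$ shows that $\epsilon_{\Phi(F)}$ is an isomorphism. Moreover, the property of being an iso under the counit is closed under retracts. So it suffices to show that every injective $I$ of $\Rep^{\alg}(\fq)$ is a direct summand of some $\Phi(F)$ with $F \in \cF$; in fact, we shall show that every injective $I$ is itself isomorphic to such a $\Phi(F)$.

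The first step is to identify $\Phi(F_{\lambda,\mu})$ with the injective object $\bT_\lambda(\bV) \otimes \bT_\mu(\bV_*)$ appearing in Proposition~\ref{prop:qalg}(e). Concretely, the quotient $F_{\lambda,\mu}/\fm F_{\lambda,\mu}$ is $\bT_\lambda(\bV) \otimes \bT_\mu(\bV)$ as a vector space, with $\fh \cong \fq$ acting via $g \mapsto (\tau g, g)$, so the first factor is twisted by the Chevalley automorphism; a direct computation (or alternatively a Yoneda argument, combining the previous lemma on the functor $\Phi$ with Proposition~\ref{prop:dim} to match Hom-sets) identifies this with $\bT_\lambda(\bV) \otimes \bT_\mu(\bV_*)$. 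By Proposition~\ref{prop:qalg}(e), this is precisely the injective envelope of the simple object $V_{\lambda,\mu}$.

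The second step is to decompose an arbitrary injective as a direct sum of such building blocks. By Proposition~\ref{prop:qalg}(b), the category $\Rep^{\alg}(\fq)$ is a Grothendieck abelian category in which every object is a union of finite length subobjects, so it is locally noetherian. In any locally noetherian Grothendieck category, every injective object decomposes as a direct sum of indecomposable injectives (the Matlis--Gabriel theorem); and by Proposition~\ref{prop:qalg}(d)-(e), up to the shift $[1]$ these indecomposable injectives are exactly the $\bT_\lambda(\bV) \otimes \bT_\mu(\bV_*)$. So $I \cong \bigoplus_j I_j$ with each $I_j \cong \Phi(F_{\lambda_j,\mu_j})$ (possibly shifted).

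Since $\Phi$ is a left adjoint and so commutes with direct sums, we set $F = \bigoplus_j F_{\lambda_j,\mu_j}$ (applying the corresponding shifts on the $A$-module side, which stay in $\cF$) and conclude $\Phi(F) \cong I$. The triangle identity then gives that $\epsilon_I$ is an isomorphism. The main technical point to pin down carefully is the identification of $\Phi(F_{\lambda,\mu})$ with $\bT_\lambda(\bV) \otimes \bT_\mu(\bV_*)$ as $\fq$-representations and the correct bookkeeping of the $[1]$-shifts in the supercategory setting; the other ingredients are immediate from Proposition~\ref{prop:qalg} and the previous lemma.
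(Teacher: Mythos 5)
Your argument is correct and is essentially the argument the paper invokes by citing \cite[Lemma~5.8]{symsp1}: identify $\Phi(F_{\lambda,\mu})$ with the standard injective $\bT_\lambda(\bV)\otimes\bT_\mu(\bV_*)$, realize an arbitrary injective $I$ as a summand of (here, a direct sum of) such objects, and transport the unit isomorphism for $F\in\cF$ through the triangle identity to get that $\epsilon_I$ is an isomorphism. Your appeal to Matlis--Gabriel is slightly heavier than necessary (it suffices to embed $I$ into a direct sum of the $\bT_\lambda(\bV)\otimes\bT_\mu(\bV_*)$ and split off $I$ by injectivity, which also sidesteps the $[1]$-shift bookkeeping), and the Chevalley twist sits on the factor complementary to the one your parenthetical suggests under the paper's identification $\fh\cong\fq$, but neither point affects the conclusion.
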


\begin{proof}
See \cite[Lemma~5.8]{symsp1}.
\end{proof}

The proof of Theorem~\ref{thm:gen} now follows in the same way as the proof of \cite[Theorem~5.1]{symsp1}.

\section{Proof of the noetherianity theorem} \label{s:noeth}

We can now finally prove the noetherianity theorem:

\begin{theorem} \label{thm:noeth}
The bivariate isomeric algebra $A$ is noetherian.
\end{theorem}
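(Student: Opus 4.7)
My plan is to follow the template of \cite{sym2noeth, periplectic, symsp1}: separate $\Mod_A$ into the torsion subcategory $\Mod_A^{\tors}$ and the generic quotient $\Mod_A^{\gen}$, prove local noetherianity of each, and then combine.

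First I would handle $\Mod_A^{\tors}$. A finitely generated torsion module $M$ is by definition annihilated by some non-zero ideal $\fa \subseteq A$, so it may be regarded as a finitely generated module over the queer algebra $A/\fa$; this algebra is bounded by Corollary~\ref{cor:A-bounded} and hence noetherian by Proposition~\ref{prop:bounded-noeth}, so $M$ is noetherian. For $\Mod_A^{\gen}$, local noetherianity is immediate from Corollary~\ref{cor:gen}(a), which asserts that every finitely generated object has finite length.

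To combine the pieces, I would take a finitely generated $M$ together with an ascending chain $M_1 \subseteq M_2 \subseteq \cdots$ of submodules. Applying the exact functor $T$ gives an ascending chain in the finite-length object $T(M)$, which stabilizes at some $n_0$; for $i \geq n_0$ the quotient $M_i/M_{n_0}$ vanishes under $T$ and is hence torsion. It therefore suffices to prove that for any finitely generated $N$ the maximal torsion submodule $N^{\tors}$ is itself finitely generated (whence it will be noetherian by the torsion step and the chain must stabilize).

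The finite-generation claim for $N^{\tors}$ is where I expect the real work to lie. My plan is to prove it by induction on the generic length $\ell(T(N))$, using the following structural inputs from Section~\ref{s:gen}: by Proposition~\ref{prop:qalg}(f) the finite-length object $T(N)$ admits a finite injective resolution in $\Mod_A^{\gen}$ whose terms have the form $T(V_j \otimes A)$ with $V_j$ finite-length polynomial representations (Corollary~\ref{cor:gen}(c)), and by Corollary~\ref{cor:gen}(d) combined with the fact that $S$ preserves injectives (being a right adjoint to the exact functor $T$), each $V_j \otimes A$ is in fact injective in $\Mod_A$ itself. Applying $S$ to this resolution and composing with the unit $\eta_N \colon N \to S(T(N))$ produces a map $N \to V_0 \otimes A$ whose kernel is $N^{\tors}$; the injectivity of the $V_j \otimes A$, together with the inductive hypothesis applied to appropriately chosen shorter-generic-length submodules and quotients of $N$, should then yield the desired finite generation of $N^{\tors}$.
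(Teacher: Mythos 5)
Your decomposition into torsion and generic parts, the treatment of each piece (Corollary~\ref{cor:A-bounded} plus Proposition~\ref{prop:bounded-noeth} for torsion, Corollary~\ref{cor:gen}(a) for generic), and the reduction of an ascending chain to the finite generation of a torsion submodule are all sound and parallel to the paper, which runs the same reduction with a finitely generated projective $P$ and the saturation $N'=S(T(N_1))$ containing the whole chain. The gap is exactly at the step you flag as ``the real work'': you have no mechanism for proving that $N^{\tors}=\ker(N \to S(T(N)))$ (equivalently, that $S(T(N))$ itself) is finitely generated. Your proposed induction on $\ell(T(N))$ does reduce, via choosing a finitely generated $N_1\subset N$ with $0<\ell(T(N_1))<\ell(T(N))$, to the case where $T(N)$ is simple; but there the argument stalls. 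Knowing that $N/N^{\tors}$ embeds into $V_0\otimes A$ and that $V_0\otimes A$ is injective in $\Mod_A$ (your deduction of this from Corollary~\ref{cor:gen}(c),(d) and exactness of $T$ is correct, but the paper never needs it) gives no control over the kernel: ``kernels of maps between finitely generated modules are finitely generated'' is precisely the noetherianity you are trying to prove, so invoking anything of that shape is circular, and no inductive hypothesis on shorter generic length is available once $T(N)$ is simple.

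The paper's way around this circularity is the (FT) condition: $M$ satisfies (FT) if $\Tor_i^A(M,\bC)$ has finite length for all $i$. Lemma~\ref{lem:FT} shows finitely generated torsion modules satisfy (FT) (via boundedness and the Koszul complex), and Lemma~\ref{lem:FT2} shows $S(M)$ satisfies (FT) for any finite length $M$ in $\Mod_A^{\gen}$, by induction on the (finite) injective dimension of $M$ using the derived functors $\rR^i S$ and the four-term sequence $0 \to S(M) \to S(I) \to S(N) \to (\rR^1 S)(M) \to 0$, where $(\rR^1 S)(M)$ is torsion and finitely generated, hence (FT). The point is that (FT), unlike plain finite generation, propagates to the remaining term of an exact sequence in \emph{both} directions via the long exact sequence of Tor, so one can conclude $S(M)$ is finitely generated without already knowing noetherianity. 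Your proposal contains no substitute for this bootstrapping device, so as written the crucial finite-generation claim is unproved; to complete the argument you would need to introduce (FT) (or an equivalent finiteness invariant computable from the Koszul complex) and prove the analogue of Lemma~\ref{lem:FT2}.
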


The proof will occupy the remainder of this section. We say that an $A$-module $M$ satisfies (FT) if $\Tor_i^A(M, \bC)$ is a finite length $\fq \times \fq$-module for all $i \ge 0$. This implies $M$ is finitely generated (it suffices to just know that $\Tor_0^A(M,\bC)$ is finite length) by Nakayama's lemma.

\begin{lemma} \label{lem:FT}
Let $M$ be a finitely generated torsion $A$-module. Then $M$ satisfies (FT) and $M$ is noetherian.
\end{lemma}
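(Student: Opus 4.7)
The plan is to exploit that a finitely generated torsion $A$-module is a module over a bounded (hence noetherian) quotient of $A$, and then to deduce (FT) via a change-of-rings spectral sequence.

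Noetherianness comes first. Since $M$ is finitely generated and torsion, there is a non-zero equivariant ideal $\fa$ of $A$ with $\fa M = 0$, so $M$ is a finitely generated module over $B := A/\fa$. By Corollary~\ref{cor:A-bounded}, $B$ is bounded, and by Proposition~\ref{prop:bounded-noeth}, $B$ is noetherian. Any $A$-submodule of $M$ is killed by $\fa$ and hence is a $B$-submodule; so $M$ is noetherian as an $A$-module.

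For (FT), I would use the Grothendieck spectral sequence associated to the composition $A \twoheadrightarrow B \twoheadrightarrow \bC$,
\[
E_2^{p,q} = \Tor_p^B(M, \Tor_q^A(B, \bC)) \Longrightarrow \Tor_{p+q}^A(M, \bC),
\]
which is valid because objects of the form $B \otimes V$ are $B$-flat. It then suffices to show every $E_2^{p,q}$ is finite length. By noetherianness of $B$, one can inductively construct a $B$-resolution $Q_\bullet \to M$ with $Q_p = B \otimes W_p$ and each $W_p$ finite length (at each step the syzygy is finitely generated over $B$ and its quotient by $B_+$ is a quotient of a finite length generating space). Then $\Tor_p^B(M, W)$ is a subquotient of $W_p \otimes W$, which is finite length because tensor products of finite length polynomial representations are finite length. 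This reduces the problem to showing $\Tor_q^A(B, \bC)$ is finite length.

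I would prove this by combining boundedness with a finite-dimensional computation at each level. The super-Koszul resolution $A \otimes \Sym(\bU[1]) \to \bC$ (the standard resolution of the trivial module over the supercommutative polynomial algebra $A = \Sym(\bU)$) exhibits $\Tor_q^A(B, \bC)$ as a subquotient of $B \otimes \Sym^q(\bU[1])$. Since both $B$ (by Corollary~\ref{cor:A-bounded}) and $\Sym^q(\bU[1])$ are bounded, so is $\Tor_q^A(B, \bC)$. Evaluating at $\bC^{r|r} \times \bC^{r|r}$ commutes with the Koszul construction and returns $\Tor_q^{A_r}(B_r, \bC)$, where $A_r$ is an ordinary finitely generated supercommutative $\bC$-algebra and $B_r$ a finitely generated $A_r$-module; classical commutative algebra shows this Tor is finitely generated over $A_r$ and annihilated by the augmentation ideal, hence finite-dimensional over $\bC$. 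A bounded polynomial representation of $\fq \times \fq$ whose value at $\bC^{r|r} \times \bC^{r|r}$ is finite-dimensional (for $r$ at least its length) has only finitely many irreducible constituents, so is finite length. Putting everything together, every $E_2^{p,q}$ is finite length, hence so is $\Tor_n^A(M, \bC)$. The main technical subtlety I anticipate is verifying that the super-Koszul resolution and its evaluations behave correctly in the polynomial functor category $\QPol^{(2)}$; this should reduce cleanly to classical statements at each finite level.
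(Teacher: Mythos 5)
Your proof is correct, and the noetherianity half is exactly the paper's argument (kill $M$ by a nonzero ideal $\fa$, invoke Corollary~\ref{cor:A-bounded} and Proposition~\ref{prop:bounded-noeth}). For (FT) your route is heavier than the paper's, though the engine is the same: the paper observes that $M$ itself is bounded (being finitely generated over the bounded algebra $A/\fa$), so $\Tor_i^A(M,\bC)$, being computed by the Koszul complex, is a subquotient of $\bigwedge^i(\bU) \otimes M$, hence bounded, and can therefore be computed after specializing to finitely many variables, where finite length follows from classical noetherianity exactly as in your last paragraph. You instead interpose a change-of-rings spectral sequence $\Tor_p^{B}(M,\Tor_q^A(B,\bC)) \Rightarrow \Tor_{p+q}^A(M,\bC)$ with $B=A/\fa$, build an equivariant $B$-resolution of $M$ with finite-length coefficient spaces, and run the Koszul/specialization argument only for $\Tor_q^A(B,\bC)$. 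All of these steps check out (the spectral sequence exists in the equivariant category, $B\otimes W$ is projective there and free over $\vert B\vert$, tensor products of finite length polynomial representations are finite length, and only finitely many $E_2^{p,q}$ contribute in each total degree), but the detour is unnecessary: once you know $M$ is bounded, your own finite-level argument applies verbatim to $\Tor_i^A(M,\bC)$ directly, which is what the paper does. What your version buys is a template that would still function if one only knew boundedness of $B$ rather than of $M$; what it costs is the extra bookkeeping of the spectral sequence and the $B$-resolution.
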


\begin{proof}
  Now let $M$ be a finitely generated torsion $A$-module. Then there is a non-zero ideal $I \subset A$ which annihilates $M$. By Corollary~\ref{cor:A-bounded}, $A/I$ is bounded, so $M$ is noetherian by Proposition~\ref{prop:bounded-noeth}. Next, $\Tor_i^A(M, \bC)$ is computed by the Koszul complex, and hence is a subquotient of $\bigwedge^i(\bU) \otimes M$, which is bounded. Hence it can be computed by specializing to finitely many variables, which implies that it has finite length.
\end{proof}

\begin{lemma} \label{lem:FT2}
If $M$ is a finite length object of $\Mod_A^{\gen}$ then $S(M)$ satisfies (FT).
\end{lemma}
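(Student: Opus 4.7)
The plan is to induct on the injective dimension of $M$ in $\Mod_A^{\gen}$, which is finite by Corollary~\ref{cor:gen}(b). The key tools are the identification of $S$ on injective objects from Corollary~\ref{cor:gen}(d) and the long exact sequence for the right derived functors $R^\bullet S$.

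In the base case $M$ is injective in $\Mod_A^{\gen}$. Combining Corollary~\ref{cor:gen}(c) with the classification of indecomposable injectives of $\Rep^{\alg}(\fq)$ in Proposition~\ref{prop:qalg}(e), transported through the equivalence $\Mod_A^{\gen} \simeq \Rep^{\alg}(\fq)$ of Theorem~\ref{thm:gen}(d), one writes $M = T(V \otimes A)$ for some \emph{finite length} polynomial representation $V$ of $\fq \times \fq$. Then $S(M) = V \otimes A$ by Corollary~\ref{cor:gen}(d), which is free over $A$; hence $\Tor^A_0(S(M), \bC) = V$ has finite length and $\Tor^A_i(S(M), \bC) = 0$ for $i \ge 1$, giving (FT).

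For the inductive step, embed $M$ in its injective envelope $I = T(V \otimes A)$ with $V$ finite length, and set $Q = I/M$. Standard homological algebra gives that $Q$ has finite length and injective dimension one less than that of $M$, so by induction $S(Q)$ satisfies (FT) and is in particular finitely generated by Nakayama. Applying the derived section functor to $0 \to M \to I \to Q \to 0$ and using $R^i S(I) = 0$ for $i \ge 1$ (basic derived functor nonsense, since $I$ is injective) yields the four term exact sequence
\[
0 \to S(M) \to V \otimes A \to S(Q) \to R^1 S(M) \to 0.
\]
Because $T$ is exact and $T S \cong \id$ by Theorem~\ref{thm:gen}(c), applying $T$ to this sequence recovers $0 \to M \to I \to Q \to 0$, forcing $T(R^1 S(M)) = 0$, so $R^1 S(M)$ is torsion. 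As it is also a quotient of the finitely generated module $S(Q)$, it is itself finitely generated, and Lemma~\ref{lem:FT} then yields that $R^1 S(M)$ satisfies (FT).

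The conclusion follows by splitting the four term sequence into the two short exact sequences $0 \to S(M) \to V \otimes A \to N \to 0$ and $0 \to N \to S(Q) \to R^1 S(M) \to 0$, where $N = \mathrm{image}(V \otimes A \to S(Q))$, and applying the two out of three property for (FT) twice: this property follows at once from the long exact sequence for $\Tor^A_\bullet(-, \bC)$ together with the fact that finite length polynomial representations of $\fq \times \fq$ form a Serre subcategory. Since $S(Q)$ and $R^1 S(M)$ satisfy (FT), so does $N$; since $V \otimes A$ and $N$ satisfy (FT), so does $S(M)$. The step requiring the most care is the identification in the base case of a finite length injective as $T(V \otimes A)$ with $V$ of \emph{finite} length rather than a general polynomial, which rests on the detailed structure of $\Rep^{\alg}(\fq)$ recorded in Proposition~\ref{prop:qalg}.
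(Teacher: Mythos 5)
Your proof is correct and follows essentially the same route as the paper: induction on injective dimension, with the base case resting on the identification of finite length injectives with $T(V \otimes A)$ for $V$ of finite length, and the inductive step using the four-term sequence $0 \to S(M) \to S(I) \to S(Q) \to \rR^1 S(M) \to 0$ together with the fact that $\rR^1 S(M)$ is torsion and finitely generated, hence satisfies (FT) by Lemma~\ref{lem:FT}. The only cosmetic difference is that the paper strengthens the inductive statement to cover all $\rR^i S$, whereas you track only $S$ itself and treat $\rR^1 S(M)$ directly; your explicit verification that $\rR^1 S(M)$ is torsion and your spelled-out two-out-of-three argument for (FT) are exactly what the paper leaves implicit.
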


\begin{proof}
The proof is essentially the same as \cite[Proposition~4.8]{sym2noeth}. We recall the details. We in fact show that $(\rR^i S)(M)$ satisfies (FT) for all $i \ge 0$. We proceed by induction on the injective dimension of $M$, which is finite by Corollary~\ref{cor:gen}(b).

First suppose that $M$ is injective. Then $M=T(V \otimes A)$ for some finite length $V \in \Rep^{\pol}(\fq \times \fq)$ (Corollary~\ref{cor:gen}), and so $S(M)=V \otimes A$ (Corollary~\ref{cor:gen}), which obviously satisfies (FT); of course, $(\rR^i S)(M)=0$ for $i>0$ since $M$ is injective. Thus the claim holds.

Now suppose that $M$ has positive injective dimension. Choose a short exact sequence
\begin{displaymath}
0 \to M \to I \to N \to 0
\end{displaymath}
where $I$ is injective and $N$ has smaller injective dimension than $M$. Then $(\rR^i S)(M)=(\rR^{i-1} S)(N)$ for $i \ge 2$, and thus satisfies (FT) by the inductive hypothesis. We have an exact sequence
\begin{displaymath}
0 \to S(M) \to S(I) \to S(N) \to (\rR^1 S)(M) \to 0.
\end{displaymath}
Since $S(N)$ satisfies (FT) by the inductive hypothesis, it is finitely generated, and so $(\rR^1 S)(M)$ is finitely generated. Since $(\rR^1 S)(M)$ is also torsion, it satisfies (FT) by Lemma~\ref{lem:FT}. It thus follows that $S(M)$ satisfies (FT), as the other three terms in the exact sequence do.
\end{proof}

\begin{proof}[Proof of Theorem~\ref{thm:noeth}]
The proof is essentially the same as \cite[Theorem~4.9]{sym2noeth}. We recall the details. Let $P$ be a finitely generated projective $A$-module and let $N_1 \subset N_2 \subset \cdots$ be an ascending chain of $A$-submodules. Then $T(N_{\bullet})$ is an ascending chain in $T(P)$, and thus stabilizes, as $T(P)$ has finite length. Discarding finitely many terms, we assume that $T(N_{\bullet})$ is constant. Let $N'$ be the common value of $S(T(N_i))$. We have the following:
\begin{itemize}
\item $N'$ is finitely generated by Lemma~\ref{lem:FT2};
\item $N'$ is a submodule of $P=S(T(P))$ that contains each $N_i$; and
\item $N'/N_1$ is torsion, since $T(N'/N_1)=0$.
\end{itemize}
Since $N'/N_1$ is finitely generated and torsion, it is noetherian (Lemma~\ref{lem:FT}), so the ascending chain $N_{\bullet}/N_1$ in it stabilizes. It follows that $N_{\bullet}$ stabilizes, and so $P$ is noetherian.
\end{proof}

\end{document}